\newtheorem{definition}{Definition}
\newtheorem{remark}{Remark}
\newtheorem{lemma}{Lemma}
\newtheorem{proposition}{Proposition}
\newtheorem{theorem}{Theorem}
\pgfplotsset{compat=1.13}
\newcommand{\Id}{\mathds{1}}
\newcommand{\warp}{\mathbf{T}}
\newcommand{\R}{\mathbb{R}}
\newcommand{\N}{\mathbb{N}}
\DeclareMathOperator*{\argmin}{\operatorname{argmin}}
\newcommand{\tr}{\operatorname{}{tr}}
\renewcommand{\div}{\operatorname{}{div}}
\newcommand{\sym}{\text{sym}}
\renewcommand{\d}{\,\mathrm{d}}
\newcommand{\bigO}{\mathcal{O}}
\newcommand{\image}{u}
\newcommand{\deformation}{\phi}
\newcommand{\imagevec}{\mathbf{u}}
\newcommand{\defvec}{\mathbf{\Phi}}
\newcommand{\accvec}{\mathbf{a}}
\newcommand{\imageSpace}{\mathcal{I}}
\newcommand{\deformationSpace}{\mathcal{D}}
\newcommand{\motionSpace}{\mathcal{V}}
\newcommand{\admset}{\mathcal{D}}
\newcommand{\domain}{\Omega}
\newcommand{\energyDensity}{W}
\newcommand{\Pathenergy}{\mathbf{E}}
\newcommand{\SplinePathenergy}{\mathbf{F}}
\newcommand{\pathenergy}{\mathcal{E}}
\newcommand{\splinepathenergy}{\mathcal{F}}
\newcommand{\discreteDomain}{\Omega_{\scriptscriptstyle{MN}}}
\newcommand{\MN}{\scriptscriptstyle{MN}}
\newcommand{\discreteImage}{\mathbf{u}}
\newcommand{\discreteDerivative}{\mathbf{z}}
\newcommand{\discreteDeformation}{\boldsymbol{\phi}}
\newcommand{\discreteAcceleration}{\mathbf{a}}
\newcommand{\DataEnergy}{\mathbf{D}}
\newcommand{\discretex}{\mathbf{x}}
\newcommand{\discretey}{\mathbf{y}}
\definecolor{blue}{rgb}{0.137255,0,0.862745}
\definecolor{red}{rgb}{0.862745,0.137255,0}
\definecolor{myOrange}{RGB}{255, 169, 87}
\definecolor{myGreen}{RGB}{180, 255, 162}
\definecolor{myGrey}{RGB}{187, 187, 187}
\definecolor{myDarkGrey}{RGB}{100, ,100, 100}
\newcommand{\change}[1]{{#1}}
\begin{document}

\title{Consistent Approximation of Interpolating Splines in Image Metamorphosis
}

\author{Jorge Justiniano\thanks{Institute for Numerical Simulation, University of Bonn (\url{name.surname@ins.uni-bonn.de})}  \and
        Marko Rajkovi\'c\footnotemark[1]  \and
        Martin Rumpf\footnotemark[1]
}

\maketitle

\begin{abstract}
This paper investigates a variational model for splines in the image metamorphosis model
for \change{the smooth} interpolation of key frames in the space of images. 
\change{ The Riemannian manifold of images based on the metamorphosis model defines shortest  
geodesic paths interpolating two images as minimizers of the path energy which measures the viscous dissipation caused by the motion field and dissipation caused by the material derivative of the image intensity along motion paths.}
\change{In this paper we aim at smooth interpolation of multiple key frame images 
picking up the general observation of 
cubic splines in Euclidean space which minimize the squared acceleration along the interpolation path.
To this end, we propose the spline functional which combines quadratic functionals of the 
Eulerian motion acceleration and of the 
second material derivative of the image intensity as the proper notion of image intensity acceleration.}
\change{We propose a variational time discretization of this  model and study the convergence to a suitably relaxed time continuous model via $\Gamma$-convergence methodology.}
As a byproduct, this also allows to establish the existence of metamorphosis splines for given key frame images as minimizers of the \change{time} continuous  spline functional.
The time discretization is complemented by effective spatial discretization based on finite differences and 
a stable B-spline interpolation of deformed quantities.
A variety of numerical examples demonstrates the robustness and versatility of the proposed method in applications. For the minimization of the fully discrete energy functional a variant of the iPALM algorithm is used\footnote{
This publication is an extended version of the previous conference proceeding~\cite{JuRaRu21} presented at SSVM 2021.}.
%\keywords{Image metamorphosis \and Image morphing \and Spline interpolation \and  $\Gamma$-convergence \and iPALM algorithm}
\end{abstract}

\section{Introduction}
\label{sec:intro}
\change{At first, we briefly review image metamorphosis as a flexible model for image morphing which generalizes the flow of diffeomorphism approach (cf. the textbook by Younes \cite{Yo10}). It has been extensively investigated by Trouv\'e, Younes and coworkers~\cite{TrYo05,TrYo05a}. Image metamorphosis is a Riemannian manifold approach to the space of 
images and can be used to interpolate two images by a connecting geodesic path. Thereby, geodesic paths are 
minimizers  of the associated path energy  
over all regular paths connecting a pair of input images.  
The underlying metric associates a cost both to the transport of image intensities via a viscous flow and to image intensity variations along motion paths. 
The path energy is given as time integral over the metric evaluated on 
a pair of transport velocities and material derivatives of image intensities.}  The metamorphosis model has been extended to discrete measures~\cite{RiYo13}, to reproducing kernel Hilbert spaces~\cite{RiYo16}, to functional shapes~\cite{ChChTr16}, to images on Hadamard manifolds~\cite{EfNe19} and to deep feature spaces~\cite{EfKoPo19a}.
\change{Based on the time discrete metamorphosis model proposed in \cite{BeEf14},
Effland \textit{et al.} introduced the B\'{e}zier curves interpolation in the space of images \cite{EfRuSi14}, image geodesics for optical coherence tomography \cite{BeBuEf17} and discrete extrapolation \cite{EfRuSc17a}.
The (time discrete) metamorphosis model was further used for regularization of several inverse problems in imaging:
exemplar-based face colorization \cite{PePiSt17},
sparse and limited angle computerized tomography and super-resolution of images \cite{NePeSt19}, and joint (spatio-temporal) tomographic reconstruction and registration \cite{GrChOk20}.}

\change{Geodesic paths are the generalization of straight lines in Euclidean space to Riemannian manifolds.
The need for an as smooth as possible interpolation of multiple data points is an extensively studied problem in mathematics. Cubic splines are a widespread and versatile solution 
for this problem.
Due to the famous result by de Boor~\cite{dB63}, in Euclidean space  this approach amounts 
to finding a path $t\mapsto u(t)$ which minimizes the integral of the squared acceleration $\int_0^1 |\ddot u(t)|^2 \d t$.
 %the minimizers of this energy are cubic splines.
In a Riemannian context, Noakes et al.~\cite{NoHePa89} introduced Riemannian cubic splines as stationary paths of the integrated squared covariant derivative of the velocity.
In this paper, we aim at generalizing this ansatz to the Riemannian space of images equipped 
with the metamorphosis metric.
%The data points to be interpolated are in this cases images.
Piecewise geodesic interpolation of images lacks smoothness at the interpolation data points leading to a jerking 
at the corresponding times when animating the resulting family of images. 
To overcome this shortcoming, we consider as the spline energy the sum of
 two integrals over squared acceleration quantities.
The first quantity is the classical transport based Eulerian acceleration and the second  
quantity represents the second order material derivative of the image intensity.
Minimizers of this spline energy are one way to naturally generalize Euclidean splines to the metamorphosis model on the space of images. In fact, our model separates in a physically intuitive way the Eulerian flow acceleration, and the second material derivative of the image intensity.}
This way differs from the spline energy introduced by Noakes
with the spline energy being the integral over the squared covariant derivative of the path velocity 
(cf. also \cite{NoHePa89,TaVi19,Vi20} or \cite{HeRuWi18}). In this case the squared covariant derivative of the path velocity with respect to the Riemannian metric would lead to an interwoven model of the different types of acceleration.

% In this paper, we discuss a spline energy functional as a second order extension of the first order path energy. Given a set of key frames at 
% disjoint times a spline path is given as a minimizer of the spline energy subject to the key frame interpolation constraint.
%In Euclidean space cubic splines $t\mapsto u(t)$ are known to be minimizers of the integral of the squared acceleration 
%$\int_0^1 |\ddot u(t)|^2 \d t$ due  to the famous result by de Boor~\cite{dB63}. In a Riemannian context, Noakes et al.~\cite{NoHePa89} introduced Riemannian cubic splines as stationary paths of the integrated squared covariant derivative of the velocity.
\change{One of the first applications of splines in computer vision was work by Mumford~\cite{Mu94} where splines appear as a maximum likelihood reconstruction of occluded edges due to the penalization of curvature of arc length parametrized curves.}
Today, there is a variety of spline approaches in nonlinear spaces and with applications to shape spaces.
Trouv\'e and Vialard~\cite{TrVi12} studied a second-order shape functional in landmark space based on an optimal control approach.
Singh et al.~\cite{SiNiVi15} introduced an optimal control method involving a functional which measures the motion acceleration in a flow of  diffeomorphisms ansatz for image regression.
Tahraoui and Vialard~\cite{TaVi19} consider a second-order variational model on the group of diffeomorphisms of the interval $[0,1]$. 
%involving the Eulerian acceleration in the context to diffeomorphic flow.
They proposed a relaxed model leading to a Fisher-Rao functional, as a convex functional on the space of measures. Vialard \cite{Vi20} showed the existence of a minimizer of the Riemannian acceleration energy on the group of diffeomorphisms endowed with a right-invariant Sobolev metric of high order. 
Benamou et al.~\cite{BeGaVi19} and Chen et al.~\cite{ChCoGe18}
discussed spline interpolation in the space of probability measures endowed with the Wasserstein metric. Thereby, energy splines are defined as minimizers of the action functional on Wasserstein space which involves the acceleration of measure-valued paths, sharing similarities with the spline functional in the space of images introduced in this paper.
The initial computational intractability of such approaches is tackled by a  relaxation based on multi-marginal optimal transport and entropic regularization.
The transport problem this approach aims at solving might not have a Monge solution,
which was remedied by a new method introduced by Chewi et al.~\cite{ChClGo+20} to construct measure-valued splines, dubbed transport splines. This method additionally enjoys substantial computational advantages.

\change{In this paper,} we will recall the image metamorphosis model and the corresponding path energy whose minimizers are geodesic paths. \change{On that basis we then derive the novel spline energy functional. 
Given a set of key frames at 
disjoint times a spline curve is then given as a smooth in time minimizer of this spline energy respecting 
the key frame images as interpolation constraints.}
Furthermore, we will study a suitable time discrete variational model, which generalizes the time discrete 
metamorphosis model proposed in~\cite{BeEf14,EfKoPo19a} . The central contribution of this paper is the convergence of this time discrete model 
to the time continuous metamorphosis spline model in the sense of Mosco~\cite{Mo69}. As a consequence, one obtains existence of metamorphosis spline paths.
Finally, we discretize the model in space and we derive a numerical scheme to solve for fully discrete metamorphosis spline paths in the space of images.

\paragraph{Notation.}
Throughout this paper, we assume that the image domain~$\domain\subset\R^d$ for $d\in\{2,3\}$ is bounded and strongly Lipschitz.
We use standard notation for Lebes\-gue and Sobolev spaces from the image domain~$\domain$ to a Banach space~$X$,
i.e.~$L^p(\domain,X)$ and $H^m(\domain,X)$ and omit $X$ if the space is clear from the context.
The associated $X$ norms are denoted by $\Vert\cdot\Vert_{L^p(\domain)}$ and $\Vert\cdot\Vert_{H^m(\domain)}$, respectively, and the seminorm in $H^m(\domain)$ is given by $|\cdot|_{H^m(\domain)}$, i.e.
\[
|f|_{H^m(\domain)}=\Vert D^m f\Vert_{L^2(\domain)}\,,\quad
\Vert f\Vert_{H^m(\domain)}^2=\sum_{j=0}^m|f|_{H^j(\domain)}^2
\]
for $f\in H^m(\domain)$.
We use the notation $C^{k,\alpha}(\overline \domain,X)$ for H\"older spaces of order $k\geq0$ with H\"older regularity $\alpha\in(0,1]$ for the $k$-th derivatives and the corresponding (semi)norm is
\begin{align*}
|f|_{C^{0,\alpha}(\overline\domain)}=\sup_{x\neq y\in\domain}\frac{|f(x)-f(y)|}{|x-y|^\alpha}\,, \quad
\Vert f\Vert_{C^{k,\alpha}(\overline\domain)}=\Vert f\Vert_{C^k(\overline\domain)}+\sum_{|\beta|=k}|D^{\beta}f|_{C^{0,\alpha}(\overline \domain)}\,.
\end{align*}
The space $AC^p([0,1],X)$ is the space of absolutely continuous functions with the derivative in $L^p((0,1))$. 
The symmetric part of a matrix $A\in \R^{d,d}$ is denoted by $A^\sym$, i.e.~$A^\sym=\frac{1}{2}(A\!+\!A^\top)$ and the symmetrized Jacobian of a differentiable function $\phi$ by $\varepsilon[\phi]\!=\!(D\phi)^{\sym}$.
We denote  by $\Id$ both the identity map and the identity matrix.
Finally, $f_t$ refers to the evaluation of the function at time $t$, while $\dot{f}$ refers to the temporal derivative of a differentiable function~$f$.

\paragraph{Organization. }
This paper is organized as follows. In Section~\ref{sec:review} we review the most important properties of the flow of diffeomorphism model and the metamorphosis model as its extension.
In Section~\ref{sec:time_continuous} the time continuous spline energy is derived  
and the proper interplay between Lagrangian and Eulerian perspective is discussed.
Then, in Section~\ref{section:time_discrete} a variational time discretization of the continuous spline energy is introduced and the existence of discrete splines is studied. Section~\ref{sec:time_extension} represents the time extension of the time discrete quantities, which allows study of convergence to the time continuous model, presented in Section~\ref{sec:convergence}.
Section~\ref{sec:fully_discrete} explains the fully discrete scheme and Section~\ref{sec:algorithm}
shows how to set up a suitable iPALM algorithm to numerically solve for a spline interpolation 
given a set of key frames.
Finally, Section~\ref{sec:results} experimentally demonstrates properties of the spline approach for image metamorphosis and shows applications of the proposed method.

In comparison with the conference proceeding \cite{JuRaRu21} we present a significantly more detailed study of both time discrete and time continuous metamorphosis splines. We prove existence of discrete metamorphosis splines, give a suitable continuous extension of discrete metamorphosis splines and show that the discrete spline functional converges in the sense of Mosco to the continuous spline functional. This in particular enables to establish the existence of continuous metamorphosis splines as minimizers of the continuous spline energy.  Furthermore, we extended the applications section.

\section{Review of Metamorphosis model}\label{sec:review}
In this section, we briefly review the classical flow of diffeomorphism model and the metamorphosis model as its generalization.

\subsection{Flow of Diffeomorphism}
\change{The flow of diffeomorphism model~\cite{DuGrMi98,BeMiTr05,JoMi00,MiTrYo02} is based on Arnold's paradigm \cite{Ar66a} to study of flow of ideal fluids by geodesics on the group of
volume-preserving diffeomorphisms. To this end, given a domain $\domain \subset \R^d$, one considers a family of diffeomorphisms $\left(\psi_t\right)_{t\in [0,1]}:\overline\domain\to\R^d$ determined by its time-dependent {Eulerian velocity} $v_t=\dot\psi_t\circ\psi^{-1}_t$. The Riemannian structure on this space is given by the metric
\begin{align*}
g_{\psi_t}(\dot{\psi}_t,\dot{\psi}_t)\coloneqq\int_\domain L[v,v]\d x,
\end{align*}
and the path energy
\begin{equation*}
\pathenergy_{\psi_t}[(\psi_t)_{t\in [0,1]}]\coloneqq\int^1_0 g_{\psi_t}(\dot{\psi}_t,\dot{\psi}_t)\d t.
\end{equation*}
}
%In the flow of diffeomorphism model, the temporal chan\-ge of $c$-channel image intensities $(\image_t)_{t \in [0,1]}:\domain \to \R^c$, is determined by a {family of diffeomorphisms} $(\psi_t)_{t\in [0,1]}:\overline\domain\to\R^d$ 
%describing a flow transporting image intensities along particle paths. This transport is given in terms of the equation $\image_t(\cdot)=\image_0\circ\psi^{-1}_t(\cdot)$ also known as \emph{brightness constancy assumption} \cite{HoSc81}, which
%is equivalent to a vanishing material derivative~$\frac{D}{\partial t}\image=\dot\image+v\cdot D\image$,
%where $v_t=\dot\psi_t\circ\psi^{-1}_t$ denotes the time-dependent {Eulerian velocity}.
%The Riemannian space of images is endowed with the path energy
%\begin{equation*}
%\pathenergy_{\psi_t}[(\psi_t)_{t\in [0,1]}]=\int^1_0 g_{\psi_t}(\dot{\psi}_t,\dot{\psi}_t)\d t,
%\end{equation*}
%where the metric is given by
%\begin{align*}
%g_{\psi_t}(\dot{\psi}_t,\dot{\psi}_t)=\int_\domain L[v,v]\d x.
%\end{align*}
Here, $L$ defines a quadratic form corresponding to a higher order elliptic operator. The specific choice, used throughout this paper is
\begin{equation}
L[v,v]\change{\coloneqq} \tr (\varepsilon[v]^2) + \gamma |D^m v|^2,~m>1+\frac{d}{2},\gamma >0. \label{eq:L_definition}
\end{equation}
In this case the metric $g_{\psi_t}(\dot \psi_t,\dot \psi_t)$ describes the viscous dissipation in a multipolar fluid model as investigated by Ne\v{c}as and \v{S}ilhav\'y~\cite{NeSi91}.
The first term of the integrand represents the dissipation density in a simple Newtonian fluid and the second term can be regarded as a higher order measure of the fluid friction.
\change{Using that the metric $g_{\psi_t}$ is $H^m(\domain)$-coercive \cite[Theorem 3.1]{DuGrMi98} shows the existence of a flow of diffeomorphisms as a minimizer of the above energy. This minimizer 
represents a geodesic path connecting two fixed diffeomorphisms.}

\change{In the context of image morphing the flow of diffeomorphism is transporting image intensities along particle paths describing the temporal chan\-ge of $c$-channel image intensities $(\image_t)_{t \in [0,1]}:\domain \to \R^c$. This transport is given in terms of the equation $\image_t(\cdot)\coloneqq\image_0\circ\psi^{-1}_t(\cdot)$ also known as \emph{brightness constancy assumption} \cite{HoSc81}, which
is equivalent to a vanishing material derivative~$\frac{D}{\partial t}\image\coloneqq\dot\image+v\cdot D\image$.}
Given two image intensity functions $\image_A,\image_B$, an associated geodesic path is a family of images subject to the \change{constraint}
$\image_0=\image_A$, $\image_1=\image_B$ and $\image_t(\cdot)=\image_A\circ\psi^{-1}_t(\cdot)$  where the underlying family of diffeomorphisms $\left(\psi_t\right)_{t\in [0,1]}$ minimizes the path energy. 

%We refer the reader to~\cite{DuGrMi98,BeMiTr05,JoMi00,MiTrYo02} for further details.

\subsection{Metamorphosis Model}
The metamorphosis approach originally proposed by Miller, Trouv\'e, Younes and coworkers in~\cite{MiYo01,TrYo05,TrYo05a}
generalizes the flow of diffeomorphism model by allowing for image intensity variations along motion paths 
and penalizing the squared material derivative in the metric. 
Under the assumption that the image path~$\image$ is sufficiently smooth, the metric and the path energy read as
\begin{align}
g_\image(\dot\image,\dot\image)&\change{\coloneqq}\min_{v:\overline\domain\to\R^d}\int_\domain L[v,v]+\frac1\delta \left|\frac{D}{\partial t}\image\right|^2\d x,\nonumber\\
\pathenergy[\image]&\change{\coloneqq}\int_0^1 g_{\image_t}(\dot\image_t,\dot \image_t)\d t.\label{eq:metamorphosis_initial_energy}
\end{align}
Hence, the flow of diffeomorphism model is the limit case of the metamorphosis model for~$\delta\to 0$.

Since paths in the space of images do not exhibit any time nor space smoothness properties in general, the evaluation of the material derivative is not well-defined.
As a solution to these problems, Trouv\'e and Younes~\cite{TrYo05a} proposed
a nonlinear geometric structure in the space of images~$\imageSpace\coloneqq L^2(\domain,\R^c)$.
In detail, for a given image path~$\image\in L^2([0,1],\imageSpace)$ and an associated
velocity field~$v\in L^2((0,1),\motionSpace)$, where $\motionSpace \coloneqq H^{m}(\domain,\R^d)\cap H^{1}_0(\domain,\R^d)$, we define 
the vector valued \emph{weak material derivative} $\hat{z}\in L^2((0,1),L^2(\domain,\R^c))$ by
\begin{align*}
\int_0^1\int_\domain\eta \hat{z}\d x\d t=-\int_0^1\int_\domain(\partial_t\eta+\div(v\eta))\image\d x\d t,
\end{align*}
for a smooth test function $\eta\in C^{\infty}_c((0,1)\times\domain)$. 
With further consideration of equivalence classes of motion fields and material derivatives inducing the same temporal change of the image intensity we can consider $(v,\hat{z})$ as a tangent vector in the tangent space of $\imageSpace$ at the image $\image$ and write $(v,\hat{z}) \in T_\image\imageSpace$.
This gives rise to the notion $H^1([0,1],\imageSpace)$ for regular paths in the space of images.
The \emph{path energy in the metamorphosis model}
for a regular path $\image\in H^1([0,1],\imageSpace)$ is then defined as
\begin{equation}
\pathenergy[\image]\change{\coloneqq}\int_0^1\inf_{(v,\hat{z})  \in T_u\imageSpace} \int_\domain L[v,v]+\frac{1}{\delta}|\hat{z}|^2\d x\d t\,. \label{eq:DefinitionPathenergyImage}
\end{equation}
Image morphing of two input images~$\image_A,\image_B\in\imageSpace$ then amounts to computing a shortest geodesic path~$\image\in H^1([0,1],\imageSpace)$ in the metamorphosis model,
which is defined as a minimizer of the path energy in the class of regular curves such that $\image(0)=\image_A$ and $\image(1)=\image_B$.
The infimum in~\eqref{eq:DefinitionPathenergyImage} is attained, which is shown in~\cite[Proposition~1 \& Theorem~2]{TrYo05a}\change{, and} 
the existence of a shortest geodesic is proven in~\cite[Theorem 6]{TrYo05a}.

The Lagrangian formulation of variation of the image intensity along motion trajectories is 
\begin{align}\label{eq:first_variational_equality}
\image_t \circ \psi_t-\image_s \circ \psi_s=\int_s^t \hat{z}_r \circ \psi_r\d r, ~ \forall s,t\in[0,1]\,.
\end{align}
This motivates a relaxed approach proposed in \cite{EfNe19,EfKoPo19a}, 
where the material derivative quantity is retrieved from a variational inequality
\begin{align}\label{eq:variational_inequality}
|\image(t,\psi_t(x))-\image(s,\psi_s(x))|\leq\int_s^t z(r,\psi_r(x))\d r
\end{align}
for a.e.~$x\in \Omega$ and all $1\geq t > s \geq 0$. Formally, the scalar valued $z=|\hat z|$ replaces the actually vector-valued material derivative.
In fact, given a path $\image$ in $L^2([0,1],\imageSpace)$ the inequality \eqref{eq:variational_inequality} defines a set $\mathcal{C}(\image) \subset L^2((0,1),\motionSpace)\times L^2((0,1) \times \domain)$ of admissible pairs  $(v,z) \in \mathcal{C}(\image)$ of the velocity and scalar weak material derivative fulfilling this \change{inequality.
Then} the geodesic energy for a path $\image\in L^2([0,1],\imageSpace)$ is defined by
\begin{equation}\label{eq:geodesic_energy_scalar}
\pathenergy[\image]\change{\coloneqq}\int_0^1\inf_{(v,z)\in\mathcal{C}(\image)}\int_\domain L[v,v]+\frac{1}{\delta}z^2\d x\d t.
\end{equation}
The equivalence of the approaches \eqref{eq:DefinitionPathenergyImage} and \eqref{eq:geodesic_energy_scalar} follows from \cite[Proposition 8]{EfNe19}, in the sense that for every $z$ satisfying \eqref{eq:variational_inequality} there exists a $\hat{z}$ satisfying \eqref{eq:first_variational_equality} with $|\hat{z}|\leq z$ and for every $\hat{z}$ satisfying \eqref{eq:first_variational_equality} we have that $z=|\hat{z}|$ satisfies \eqref{eq:variational_inequality}. Furthermore, for every $u \in L^2([0,1],\imageSpace)$ with non-empty $\mathcal{C}[u]$ one can show that $u \in C^0([0,1],\imageSpace)$ (cf.~\cite[Remark~1]{EfKoPo19a}), which allows point evaluation in time.
In both \cite{EfNe19,EfKoPo19a} the existence of continuous time geodesic paths was shown, where the relaxed approach turned out to be very natural.

\section{Time Continuous Splines in Metamorphosis Model}\label{sec:time_continuous}
%In this section we introduce the time continuous splines in metamorphosis model.
\change{In this section we study the spline interpolation for a set of given key frame images in the time continuous setting of metamorphosis model.
In mathematical notation, given a set of $J$ key frames $u_j^I \in \imageSpace$ we ask for spline interpolation $(u_t)_{t\in[0,1]}$ which satisfies the constraints}
%We ask for spline interpolation $(u_t)_{t\in[0,1]}$ given a set of $J$ key frames $u_j^I \in \imageSpace$ with constraints
\begin{equation}\label{eq:constraint}
u_{t_j} = u_j^I,~t_j\in [0,1],~ j=1,\ldots J.
\end{equation}
To this end, we recall that  cubic splines in Euclidean space minimize the integral over the squared motion acceleration subject to position constraints \cite{dB63}, 
whereas linear interpolation is associated with the minimization of the integral over the squared motion velocity.  
In our case, image morphing via minimization of the path energy \eqref{eq:metamorphosis_initial_energy} corresponds to this linear interpolation. Here, we introduce the acceleration quantities which will be penalized in the spline energy.
The Eulerian flow acceleration is defined by
\begin{align}\label{eq:formal_flow_acceleration}
a_t\circ \psi_t\change{\coloneqq}\ddot{\psi}_t 
=\frac{\d}{\d t}(v_t \circ \psi_t)=\dot{v}_t \circ \psi_t + Dv_t \circ\psi_t\cdot v_t \circ \psi_t,
\end{align}
and for image paths with enough smoothness the second order material derivative is given by 
\begin{align}
&\frac{D^2}{\change{\partial}t^2}u\label{eq:Eulerian_w}
\change{\coloneqq}\frac{\d}{\d t} (\dot{u}_t + v_t \cdot Du_t) 
=\ddot{u}_t\!+\!v_t\!\cdot\! D\dot{u}_t\!+\!Du_t \cdot \left(\dot{v}_t \!+\! 
Dv_t v_t\right) \!+\!v_t \!\cdot\! \left(D\dot{u}_t \!+\! D^2u_t v_t\right). 
\end{align}
As already mentioned in the introduction this splitting of acceleration term into flow acceleration and second order change of image intensity does not fully correspond to a Riemannian manifold approach since the stronger interference of the two entities is observed in the covariant derivative of the vector field $(v,\frac{D}{\change{\partial}t}u)$. 
The spline energy comprises of the integral over the two quadratic acceleration quantities:
\begin{equation*}
\mathcal{F}[u]\coloneqq\min_{v}\int_0^1 \int_{\domain} {L}[a,a] + \frac{1}{\delta} \left|\frac{D^2}{\change{\partial}t^2} \image\right|^2 \d x \d t\,,
\end{equation*}
where for the simplicity we use the same elliptic operator \eqref{eq:L_definition}.
%\todo[inline]{it turns out later that we have the same regularity for $a$ as for $v$. Thus, we could use $L^m$ with $m>1+\frac{d}{2}$ and there would, actually, be no need for $m$ in notation of $L$! I suggest the following sentence to introduce this (with $m-1$ being replaced by $m$)}

As in the case of geodesic path energy, we give rigorous formulations for general paths $u \in L^2((0,1),\imageSpace)$.
We 
%denote the space 
%$\mathcal{A}=H^1_0(\domain,\R^d) \cap H^{m}(\domain,\R^d)$ and
observe the pairs $(v,a) \in L^2((0,1), \motionSpace) \times  L^2((0,1),\motionSpace)$ which determine the system
\begin{align*}
v_t(\psi_t(x))&=\dot{\psi}_t(x), ~ \psi_0(x)=x \\
a_t(\psi_t(x))&=\ddot{\psi}_t(x)\change{,}
\end{align*}
\change{and the corresponding  diffeomorphic flow $\psi \in H^2((0,1), \\ H^m(\domain,\domain))$.}
Notice that from \eqref{eq:formal_flow_acceleration}, one expects that the acceleration has one derivative less in comparison to the velo\-city. However, the approach we later take allows us to have the same number of derivatives. A similar gain of smoothness was noticed in \cite{Vi20}.

Similar to \eqref{eq:first_variational_equality} the Lagrangian formulation of  the material acceleration $\hat w \in L^2((0,1),L^2(\domain,\R^c))$ is given by
\begin{equation*}
\int_s^t \hat w_r \circ \psi_r \d r = \hat z_t \circ \psi_t-\hat z_s \circ \psi_s\,.
\end{equation*}
By further using \eqref{eq:first_variational_equality} we have
\begin{align}
\int_0^\tau \int_s^t \hat{w}_{r+l} \circ \psi_{r+l} \d r \d l 
=&\int_0^\tau \hat z_{t+l} \circ \psi_{t+l} - \hat z_{s+l} \circ \psi_{t+l} \d l \nonumber\\
=& (u_{t+\tau} \circ \psi_{t+\tau}\!-\!u_t \circ \psi_t)\!-\!(u_{s+\tau} \circ \psi_{s+\tau}\!-\!u_s \circ \psi_s)\,, \label{eq:second_derivative_central_diff_eq}
\end{align} for all $s,t \in (0,1)$ and every $\tau$, such that $t+\tau, s+\tau\in[0,1]$. Observe that for $s=t-\tau$ the \change{right hand side} of \eqref{eq:second_derivative_central_diff_eq} is an integral version of the second order central difference.
Because there is no differentiation involved in these definitions, they work for general image paths.
Analogous to \eqref{eq:variational_inequality}, we introduce the scalar quantity $w \in L^2((0,1)\times \domain)$ as an relaxation of the weak second order material derivative
\begin{align}
%\label{eq:variational_inequality} \\
\int_0^\tau \int_s^t  w_{r+l} \circ \psi_{r+l} \d r \d l 
\geq  \left| u_{t+\tau} \circ \psi_{t+\tau}-u_t \circ \psi_t-u_{s+\tau} \circ \psi_{s+\tau}+u_s \circ \psi_s \right|\change{,} \label{eq:second_central_variational_inequality} 
\end{align}
\change{for every $s \leq t \in (0,1), \tau \geq 0, t+\tau \leq 1$.}
 This relaxed Lagrangian approach is substantially more handsome in \change{comparison} to the Eulerian approach \eqref{eq:Eulerian_w} which will be exploited in the proof of consistency of continuous and time discrete approaches. 

We can show the equivalence of the two approaches corresponding to \eqref{eq:second_derivative_central_diff_eq} and \eqref{eq:second_central_variational_inequality} \change{(cf.~\cite[Proposition 8]{EfNe19})}.
\begin{proposition}\label{prop:equivalence_of_approaches}
For every vector valued $(\hat{z},\hat{w})$ satisfying \eqref{eq:first_variational_equality} and \eqref{eq:second_derivative_central_diff_eq} there exist scalar quantities $(z,w)$ satisfying \eqref{eq:variational_inequality} and \eqref{eq:second_central_variational_inequality} with $z=|\hat{z}|$ and $w=|\hat{w}|$. Conversely, for every $(z,w)$ satisfying \eqref{eq:variational_inequality} and \eqref{eq:second_central_variational_inequality} there exists $(\hat{z},\hat{w})$ satisfying \eqref{eq:first_variational_equality} and \eqref{eq:second_derivative_central_diff_eq} with $ z\geq|\hat{z}|$ and $w\geq|\hat{w}|$.
\end{proposition}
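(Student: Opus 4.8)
The plan is to mirror the first-order equivalence \cite[Proposition~8]{EfNe19}, adding a second ``layer'' on top of it, with the whole argument carried out in Lagrangian coordinates along the diffeomorphic flow $\psi$ generated by the fixed velocity $v$, i.e.\ $\dot\psi_t=v_t\circ\psi_t$, $\psi_0=\Id$. Since $v\in L^2((0,1),\motionSpace)$ with $m>1+\tfrac d2$ embeds into $L^2((0,1),C^1(\overline\domain))$ and $\motionSpace\subset H^1_0$, each $\psi_t\colon\domain\to\domain$ is a bi-Lipschitz diffeomorphism, and a Liouville/Gronwall estimate yields $e^{-C}\le\det D\psi_t\le e^{C}$ uniformly in $t$; hence the change of variables $y=\psi_r(x)$ transfers $L^1$- and $L^2$-bounds in both directions, which I will use repeatedly. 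Of the two implications, only the second carries content.

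\textbf{From the Lagrangian equalities to the relaxed inequalities.} Given $(\hat z,\hat w)$ satisfying \eqref{eq:first_variational_equality} and \eqref{eq:second_derivative_central_diff_eq}, I would set $z\coloneqq|\hat z|\ge0$ and $w\coloneqq|\hat w|\ge0$, which lie in $L^2((0,1)\times\domain)$. Inequality \eqref{eq:variational_inequality} then follows by taking Euclidean norms in \eqref{eq:first_variational_equality} and pulling the norm inside the integral; \eqref{eq:second_central_variational_inequality} follows the same way from \eqref{eq:second_derivative_central_diff_eq}, using $s\le t$ and $\tau\ge0$ so that the iterated integral on the right is the integral of a nonnegative function. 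No regularity beyond the two identities is used here.

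\textbf{From the relaxed inequalities to the Lagrangian equalities.} Fix admissible representatives and recall $u\in C^0([0,1],\imageSpace)$ \cite[Remark~1]{EfKoPo19a}, so the time-point evaluations are meaningful. By the change of variables, $r\mapsto z(r,\psi_r(x))$ and $r\mapsto w(r,\psi_r(x))$ are in $L^1((0,1))$ for a.e.\ $x$. For such $x$, \eqref{eq:variational_inequality} states that the $\R^c$-valued curve $g_x(r)\coloneqq u(r,\psi_r(x))$ has increments dominated by the integral of an $L^1$ function, hence $g_x$ is absolutely continuous with $|g_x'(r)|\le z(r,\psi_r(x))$ a.e.; setting $\hat z_r(y)\coloneqq g'_{\psi_r^{-1}(y)}(r)$ gives an $L^2$ field (jointly measurable, as an a.e.\ limit of difference quotients) satisfying \eqref{eq:first_variational_equality} with $|\hat z|\le z$ a.e. --- this is precisely \cite[Proposition~8]{EfNe19}. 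Then I would insert $g_x(t+\tau)-g_x(t)=\int_0^\tau g_x'(t+l)\d l$, and the analogue at $s$, into \eqref{eq:second_central_variational_inequality}, yielding
\[
\Bigl|\int_0^\tau\bigl(g_x'(t+l)-g_x'(s+l)\bigr)\d l\Bigr|\le\int_0^\tau\int_s^t w(r+l,\psi_{r+l}(x))\d r\d l
\]
for all $s\le t$ in $(0,1)$ and all $\tau\in[0,1-t]$. Dividing by $\tau$ and letting $\tau\to0^+$, the left-hand side tends to $|g_x'(t)-g_x'(s)|$ at common Lebesgue points of $g_x'$ (hence for a.e.\ $s$ and a.e.\ $t$), while the right-hand side tends to $\int_s^t w(r,\psi_r(x))\d r$ since $l\mapsto\int_s^t w(r+l,\psi_{r+l}(x))\d r=\int_{s+l}^{t+l}w(r,\psi_r(x))\d r$ is continuous. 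Thus $|g_x'(t)-g_x'(s)|\le\int_s^t w(r,\psi_r(x))\d r$ for a.e.\ $s<t$, so $g_x'$ coincides a.e.\ with an absolutely continuous function whose derivative $g_x''$ satisfies $|g_x''|\le w(\cdot,\psi_\cdot(x))$. Setting $\hat w_r(y)\coloneqq g''_{\psi_r^{-1}(y)}(r)$, the same measurability and change-of-variables reasoning gives $\hat w\in L^2$ with $|\hat w|\le w$, and \eqref{eq:second_derivative_central_diff_eq} is recovered by undoing the computation: $\int_0^\tau\int_s^t\hat w_{r+l}(\psi_{r+l}(x))\d r\d l=\int_0^\tau\bigl(g_x'(t+l)-g_x'(s+l)\bigr)\d l=\bigl(g_x(t+\tau)-g_x(t)\bigr)-\bigl(g_x(s+\tau)-g_x(s)\bigr)$, which becomes the right-hand side of \eqref{eq:second_derivative_central_diff_eq} upon substituting $g_x(r)=u_r(\psi_r(x))$.

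\textbf{Main obstacle.} The one genuinely delicate step is the passage from the double-integral bound \eqref{eq:second_central_variational_inequality} to a pointwise-in-time modulus of continuity for the Lagrangian velocity $g_x'$: one cannot simply strip off the integrals (an inequality $|\int_0^\tau a|\le\int_0^\tau b$ with $b\ge0$ does not give $|a|\le b$), so normalizing by $\tau$ and applying the Lebesgue differentiation theorem --- simultaneously in the $s$- and $t$-variables, and using continuity of the trapezoidal right-hand side in $l$ --- is essential. Everything else (two-sided Jacobian bounds so the change of variables runs both ways, joint measurability of $\hat z$ and $\hat w$, and square integrability) is routine and parallels \cite{EfNe19}.
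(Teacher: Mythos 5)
Your proof is correct and follows essentially the same route as the paper's: first-order absolute continuity along the flow yields $\hat z$ with $|\hat z|\le z$, the first-order representation is inserted into the second-order inequality, the normalized limit $\tau\to0^+$ via Lebesgue differentiation produces a first-order-type bound on the increments of $\hat z$, and the absolute-continuity argument is repeated to produce $\hat w$. The only cosmetic difference is that you argue pointwise along a.e.\ particle path $t\mapsto u_t(\psi_t(x))$, whereas the paper runs the identical steps for the $L^2(\domain,\R^c)$-valued curve $t\mapsto u_t\circ\psi_t$ using $AC^2$ and the Ambrosio--Gigli--Savar\'e differentiability theory.
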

\begin{proof}
The first claim is obvious by the triangle inequality. To prove the converse, let $z$ satisfy \eqref{eq:variational_inequality}. We have
\begin{equation*}
\|u_t \circ \psi_t-u_s \circ \psi_s\|_{L^2(\domain)}\leq \int_s^t \| z_r\circ\psi_r\|_{L^2(\domain)} \d r,
\end{equation*}
from where we conclude that the function $t \mapsto \image_t \circ \psi_t$ is in $AC^2([0,1],L^2(\domain,\change{\R^c}))$ \change{which implies} the a.e. differentiability and the existence of derivative ${z}' \in L^2((0,1),L^2(\domain,\R^c))$ such that
\begin{equation*}
u_t \circ \psi_t-u_s \circ \psi_s=\int_s^t {z}'_r \d r=\int_s^t \hat{z}_r\circ\psi_r \d r
\end{equation*}
where $\hat{z}_r\coloneqq{z}'_r \circ\psi^{-1}_r$ and $|\hat{z}|\leq z$ \change{(cf.\cite[Remark 1.1.3 \& Theorem 1.1.2]{AmGiSa08})}.
%Observe the set 
%\begin{equation*}
%B=\{(r,x) \in [0,1]\times\domain:z(r,x)<|\hat{z}(r,x)| \}
%\end{equation*}
%and suppose it has a strictly positive Lebesgue measure. Then it can be approximated with finite unions of disjoint semi-open cuboids. On every such cuboid $[t_1,t_2)\times D$ we have
%\begin{equation*}
%\int_{t_1}^{t_2} \int_D |\hat{z}_t\circ\psi_t|^2 \d x \d t \leq \int_{t_1}^{t_2} \int_D ({z}_t \circ \psi_t)^2 \d x \d t.
%\end{equation*}
%By using the dominated convergence theorem we have the same estimate on the entire set $B$, which is an obvious contradiction.
Thus, we have verified \eqref{eq:first_variational_equality}. This implies
\begin{align*}
\Big|\int_0^\tau \hat{z}_{t+l}\circ\psi_{t+l}-\hat{z}_{s+l} \circ \psi_{s+l}\d l\Big|
\leq\int_0^\tau\int_s^t {w}_{r+l}\circ\psi_{r+l}\d r \d l\change{,}
\end{align*}
\change{for every $s \leq t \in [0,1]$ and $\tau>0$ with $t+\tau \leq 1$,
and the same holds for integration on interval $[-\tau,0]$ for $s-\tau \geq 0$.}
 Taking the limit as $\tau$ tends to zero \change{and using Lebesgue's differentiation theorem \cite[Theorem 3.21]{Fo99}} we conclude that for \change{all} $s \leq t \in [0,1]$ and a.e. $x \in \domain$ we have
\begin{equation*}
\Big|\hat{z}_t\circ \psi_t-\hat{z}_s \circ \psi_s\Big|
\leq\int_s^t {w}_r\circ\psi_r\d r.
\end{equation*}
Thus, we can repeat the above procedure to conclude the existence of $\hat{w} \in L^2((0,1),L^2(\domain,\R^c))$ satisfying
\begin{equation*}
\hat{z}_t\circ\psi_t-\hat{z}_s\circ\psi_s
=\int_s^t \hat{w}_r\circ\psi_r\d r, ~ |\hat{w}|\leq w,
\end{equation*}
from where the claim directly follows.
\end{proof}

In \cite{HeRuWi18} a regularization of the spline path energy by addition of weighted geodesic path energy was \change{necessary} for existence and further analysis of the splines. We follow the analogous approach which seems to be unavoidable also in our model.
\begin{definition}[Regularized spline energy]
Let $\sigma>0$, $m>1+\frac{d}{2}$ be \change{an integer,} and $u \in L^2((0,1),\mathcal{I})$ be an image curve. Then the regularized spline energy is defined by
\begin{align}\label{eq:continuous_spline_energy}
\mathcal{F}^{\sigma}[u]
\coloneqq\inf_{{(v,a,z,w) \in \mathcal{C}[u]}}\int_0 ^1 \int_\domain  {L}[a,a] + \frac{1}{\delta}  w^2 + \sigma \left(L[v,v] +\frac{1}{\delta} z^2 \right) \d x \d t, 
\end{align}
where $\mathcal{C}[u] \subset L^2((0,1), \motionSpace) \times L^2((0,1), \motionSpace) \times L^2((0,1)\times \domain) \times L^2((0,1)\times \domain)$ consists of tuples $(v,a, z, w)$ satisfying
\begin{align}
&v_t(\psi_t(t,x))=\dot{\psi}_t(x), ~ \psi_0(x)=x, \label{eq:first_order_flow}\\
&a_t(\psi_t(x))=\ddot{\psi}_t(x), ~\change{\forall x\in \domain, ~t \in [0,1],} \label{eq:second_order_flow}\\
&\left|\image_t \circ \psi_t-\image_s \circ \psi_s\right| \!\leq\! \int_s^t z_r \circ \psi_r \d r,~\change{\forall s\!\leq\! t \!\in\! [0,1],} \label{eq:first_derivative} \\
& \left| u_{t+\tau} \circ \psi_{t+\tau}\!-\!u_t \circ \psi_t\!-\!u_{s+\tau} \circ \psi_{s+\tau}\!+\! u_s \circ \psi_s \right| 
 \!\leq\!\int_0^\tau \int_s^t  w_{r+l} \circ \psi_{r+l} \d r \d l\change{,} ~ \change{\forall \tau, s\!+\!\tau \!\leq\! t \!+\! \tau \!\in\! [0,1].} \label{eq:second_derivative}
\end{align}
\end{definition}
%The point evaluation of image path $u$ is again possible if the set $\mathcal{C}[\image]$ is non-empty .
\change{Let us observe that similar to \cite[Remark 1]{EfKoPo19a} if $C[\image]$ is non-empty we have an additional regularity of the image curve with $u \in C^1([0,1],\imageSpace)$.}
Motivated by \cite{dB63}, we now define the continuous time spline interpolation for given key frames.
\begin{definition}[Continuous time regularized spline interpolation]\label{def:def_spline}
Let $\{\image_j^I\}_{j=1,\dots,J} \in \imageSpace^J$.
We call a minimizer $u \in L^2((0,1),\mathcal{I})$ of $\mathcal{F}^\sigma$ that satisfies \eqref{eq:constraint}
 a continuous time regularized spline interpolation of $\{\image_j^I\}_{j=1,\dots,J}$ with regularity parameter $\sigma$. 
% Similarly, a minimizer of $\mathcal{F}^\sigma$ is called a regularized spline interpolation of $\{\image_j^I\}_{j=1,\dots,J}$, with regularity parameter $\sigma$.
\end{definition}
If we do not impose any additional constraints, we say the continuous time spline interpolation has natural boundary constraints. Imposing periodic boundary \change{condition is} equivalent to defining the image curve $u_t$ on the sphere $\mathbb{S}^1$ instead of the interval $[0,1]$.
\change{In the case of Hermite boundary conditions we additionally fix the values of velocity terms,  for both the flow and the image intensity, at the initial and the final time point. Thus, we ask for $v_0=v_A, v_1=v_B$ and, in light of Proposition \ref{prop:equivalence_of_approaches} and a differentiation of the left hand side of \eqref{eq:first_variational_equality}, that $\hat{z}_0=\hat z_A$ and $\hat{z}_1=\hat z_B$.}
Note that in the case of the Hermite boundary conditions (also called clamped boundary conditions), we implicitly require $t_1=0$ and $t_{J}=1$, so that $u_0$ and $u_1$ are also prescribed.

%%%%%%%%%%%%%%%%%%%%%%%%%%%%%%%%%%%%%%%%%%%%%%%%%%%%%%%%%%%%%%%%%
%%%%%%%%%%%%%%%%%%%%%%%%%%%%%%%%%%%%%%%%%%%%%%%%%%%%%%%%%%%%%%%%%

\section{Variational Time Discretization}\label{section:time_discrete}
In this section we study the variational time discretization of the time continuous (regularized) spline energy. 
To this end, we pick up the approach of \cite{BeEf14,EfKoPo19a} for the variational time discretization of geodesic energy.
We consider a discrete image curve $\mathbf{u}=(u_0,\dots,u_K)$ 
with $u_k \in \imageSpace$ 
and define a set of admissible deformations 
\begin{equation}\label{eq:admset}
\admset\!\coloneqq\!\{\deformation \in H^{m}(\domain,\domain), ~\det(D\deformation)\!\geq\! \epsilon,~ \deformation\!=\!\Id~ \textrm{on}~ \partial \domain\},
\end{equation}
for a fixed (small) $\epsilon>0$, which consists of  
$C^1(\domain,\domain)$-diffeomorphisms~\cite[Theorem 5.5-2]{Ci88}.
\begin{remark}
The case $\epsilon=0$ is discussed in Remark \ref{remark:epsilon0}.
\end{remark}
\change{Considering} $\imagevec \in \imageSpace^{K+1}$ as time sampling at times $\frac{k}{K}$, $k=0,\dots,K$, of a smooth curve and $\defvec=(\deformation_1,\ldots,\deformation_K)\in\admset^K$ as relative flow ($\deformation_k=\psi_{\frac{k}{K}} \circ \psi^{-1}_{\frac{k-1}{K}}$) and using \change{forward} finite difference approximations we obtain the discrete version of the Eulerian velocity $v_k\coloneqq K(\deformation_k -\Id)$ and $\hat z_k\coloneqq K(\image_{k} \circ \deformation_{k}-\image_{k-1})$ for the discrete material derivative.
Furthermore, by using central finite difference we define the discrete acceleration
\begin{equation}\label{eq:discrete_acceleration_definition}
a_k \coloneqq K^2((\phi_{k+1}-\Id)\circ \phi_k - (\phi_k -\Id))
\end{equation}
and
\begin{align}
\hat w_k\coloneqq& K(\hat z_k \circ \phi_k - \hat z_{k-1})= K^2\left(\image_{k+1}\circ \deformation_{k+1} \circ \deformation_k - 2 \image_k \circ \deformation_k +\image_{k-1} \right) \label{eq:discrete_w_definition}
\end{align}
as the discrete version of the second material derivative.
Following \cite{BeEf14,EfKoPo19a} we consider the discrete path energy
\begin{align*}
\Pathenergy^{K,D}[\imagevec,\defvec]&\!\coloneqq\!
K\sum_{k=1}^{K} \!\int\limits_{\domain}\!\energyDensity_D(D\deformation_k)\!+\! \gamma |D^m \deformation_k|^2  \!+\! \frac{1}{\delta}|u_k\circ\deformation_k \!-\! u_{k-1}|^2 \d x,
\end{align*}
where $\energyDensity_D(B)\!\coloneqq\!\left|B^{sym}-\Id \right|^2$ is a simple elastic energy density.
Then, the discrete counterpart of the spline energy is defined as
\begin{align}
\SplinePathenergy^{K,D}[\imagevec,\defvec]
\!\coloneqq\! \!\frac{1}{K}\!\sum_{k=1}^{K-1}\!\int\limits_{\domain} \energyDensity_A(Da_k) + \gamma |D^m a_k|^2 +\frac{1}{\delta}\left|\hat w_k \right|^2 \d x,\label{eq:DeformationSplinePathenergy}
\end{align}
with the energy density $\energyDensity_A(B)\coloneqq\left|B^{sym}\right|^2$. We note that 
$a_k \in H^m(\domain,\R^d)$ by \cite[\change{Proposition 2.19}]{InKaTo13}.
Finally, the regularized discrete spline energy is given by
\begin{equation*}
\SplinePathenergy^{\sigma,K,D}[\imagevec,\defvec]\coloneqq\SplinePathenergy^{K,D}[\imagevec,\defvec]+\sigma\Pathenergy^{K,D}[\imagevec,\defvec]\,.
\end{equation*}
As in the continuous time model we have interpolation constraints. Let $\change{I^K \coloneqq} (i_1,\dots,i_{J^K})$ be an index tuple \change{with} $2 \leq \change{J^K} \leq K$, \change{$i_j \in \{0,\dots,K\}$ for $j=1,\dots,J^K$}.
We consider a $\change{J^K}$-tuple 
$\imageSpace^K_{f}=({\image}^I_{1},\dots,{\image}^{\change{I}}_{{J^K}})$ and define the set of admissible image vectors
\begin{equation}\label{eq:admissible_images}
\imageSpace^K_{adm}\change{\coloneqq}\{\imagevec \in \imageSpace^{K+1},~ \image_{i_j}={\image}^I_{j},~ j=1,\dots,J^K\}.
\end{equation}
%We further denote $K_j=t_{j+1}-t_j$.
We are now in a position to define discrete splines.
\begin{definition}[\change{Discrete time regularized spline interpolation}]\label{def:discrete_spline}
Let $\imagevec=(u_0, \dots, u_K) \in \imageSpace^{K}_{adm}$. Then we set
\begin{equation}\label{eq:SplinePathenergy}
\SplinePathenergy^{\sigma,K}[\imagevec]:=\inf_{\defvec \in \admset^K}\SplinePathenergy^{\sigma,K,D}[\imagevec,\defvec].
\end{equation}
A \change{discrete time regularized spline interpolation of \\ $\{{\image}^I_{j}\}_{j=1,\dots,J^K}$}  is a discrete $(K+1)$-tuple that minimizes $\SplinePathenergy^{\sigma,K}$ over all discrete paths in $\change{\imageSpace}^K_{adm}$.
\end{definition}

The presented discretization is valid in the case of natural boundary conditions, to which we restrict in \change{further} discussions. We remark that
in the case of periodic boundary conditions we make an identification $K \overset{\scriptscriptstyle\wedge}{=} 0, K+1 \overset{\scriptscriptstyle\wedge}{=} 1$ and  the sum in \eqref{eq:DeformationSplinePathenergy} goes up to $K$.
For the discrete version of Hermite boundary conditions we prescribe $\phi_1=\overline{\phi}_1,  \phi_K=\overline{\phi}_K$, $u_0=\overline{u}_0, u_K=\overline{u}_K$ and $\hat{z}_1=\overline{\hat{z}}_1,\hat{z}_K=\overline{\hat{z}}_K$
for given $\overline{\phi}_1,\overline{\phi}_K\in\deformationSpace$, $\overline{u}_0, \overline{u}_K\in\imageSpace$ and $\overline{\hat{z}}_1,\overline{\hat{z}}_K\in L^2(\domain,\R^c)$.

Next, we follow ideas from \cite{EfKoPo19a} in order to prove the existence of discrete spline interpolations. The following lemma is the analogous result to \cite[Lemma 1]{EfKoPo19a} and we only state it for completeness.
\begin{lemma}\label{lemma:sobolev_upper_bound}
There exists a constant $C$ which only depends on $\domain, m,d, \gamma$ such that
%continuous and motonically increasing function $\theta : \R^+_0 \to \R^+_0$ with $\theta(0)=0$, 
\begin{equation*}
\|\deformation - \Id\|_{H^{m}(\domain)} \leq C \sqrt{C_{\deformation}}
\end{equation*}
for all $\deformation \in \admset$ satisfying
$\int_\domain \energyDensity_D(D\deformation) \!+\! \gamma |D^{m} \deformation|^2 \d x \leq C_{\deformation}$.
%Furthermore $\theta(x) \leq C(x+x^2)^{\frac12}$ for a constant $C>0$.
\end{lemma}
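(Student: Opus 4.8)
The plan is to bound the full $H^m(\domain)$ norm of $\deformation - \Id$ by controlling its lower-order terms using the elastic energy density and a Poincaré-type argument, then absorbing the top-order term directly from the $\gamma|D^m\deformation|^2$ contribution. First I would note that $\deformation = \Id$ on $\partial\domain$, so $\deformation - \Id \in H^m_0(\domain,\R^d)$ (modulo the usual density caveats), which makes a Poincaré inequality available. The key structural input is that $\energyDensity_D(D\deformation) = |(D\deformation)^\sym - \Id|^2 = |\varepsilon[\deformation - \Id]|^2$, so the hypothesis $\int_\domain \energyDensity_D(D\deformation) + \gamma|D^m\deformation|^2 \d x \le C_\deformation$ directly gives $\|\varepsilon[\deformation-\Id]\|_{L^2(\domain)}^2 \le C_\deformation$ and $\|D^m(\deformation-\Id)\|_{L^2(\domain)}^2 = |D^m\deformation|_{L^2(\domain)}^2 \le \gamma^{-1} C_\deformation$ (using $D^m \Id = 0$ for $m \ge 2$).

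The next step is to pass from the symmetrized gradient bound to a full gradient bound. Since $\deformation - \Id$ vanishes on the boundary, Korn's inequality (the first Korn inequality on $H^1_0$) yields $\|D(\deformation-\Id)\|_{L^2(\domain)} \le C(\domain)\,\|\varepsilon[\deformation-\Id]\|_{L^2(\domain)} \le C(\domain)\sqrt{C_\deformation}$, and then the Poincaré inequality gives $\|\deformation - \Id\|_{L^2(\domain)} \le C(\domain)\sqrt{C_\deformation}$ as well. At this point I control $|\deformation - \Id|_{H^0}$, $|\deformation-\Id|_{H^1}$, and $|\deformation-\Id|_{H^m}$. To fill in the intermediate seminorms $|\deformation-\Id|_{H^j}$ for $1 < j < m$, I would invoke an interpolation inequality for Sobolev seminorms (Gagliardo–Nirenberg, or equivalently interpolation between $H^1_0$ and $H^m$ on a Lipschitz domain), which bounds $|\deformation-\Id|_{H^j}$ by a product of powers of $|\deformation-\Id|_{H^1}$ and $|\deformation-\Id|_{H^m}$, hence again by $C\sqrt{C_\deformation}$; alternatively one can use the equivalence of $\|\cdot\|_{H^m(\domain)}$ with $\|\cdot\|_{L^2} + |\cdot|_{H^m}$ on $H^m \cap H^1_0$ over a Lipschitz domain to skip the intermediate terms entirely. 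Summing the squared seminorm bounds over $j = 0, \dots, m$ gives $\|\deformation-\Id\|_{H^m(\domain)}^2 \le C(\domain,m,d,\gamma)\,C_\deformation$, which is the claim (the constant's dependence on $\gamma$ enters only through the $\gamma^{-1}$ factor on the top-order term).

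The main obstacle is getting the constants to depend only on $\domain, m, d, \gamma$ and not on $\epsilon$ or on the individual deformation — this is really a matter of being careful that every inequality invoked (Korn, Poincaré, interpolation) has a constant depending only on the domain geometry and the differentiability order, which holds since $\domain$ is bounded and strongly Lipschitz. A secondary technical point is justifying that $\deformation - \Id$ genuinely lies in $H^m_0(\domain)$ (or at least in the closure making Korn/Poincaré applicable) from the trace condition $\deformation = \Id$ on $\partial\domain$ together with $\deformation \in H^m(\domain,\domain)$; on a Lipschitz domain the vanishing-trace characterization of $H^1_0$ suffices for Korn and Poincaré, and for the top-order term we only need the $L^2$ bound on $D^m$, which is given directly. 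Since the statement is explicitly flagged as analogous to \cite[Lemma 1]{EfKoPo19a} and included only for completeness, I expect the actual proof to be short and to cite Korn's and Poincaré's inequalities essentially verbatim.
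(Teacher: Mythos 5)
Your proposal is correct and follows essentially the same route as the paper: the paper likewise bounds the top-order seminorm directly by $\sqrt{C_\deformation/\gamma}$, controls $\Vert\deformation-\Id\Vert_{L^2(\domain)}$ via Korn's and Poincar\'e's inequalities applied to $\varepsilon[\deformation]-\Id$, and combines these through the Gagliardo--Nirenberg equivalence $\Vert\cdot\Vert_{H^m(\domain)}\leq C(\Vert\cdot\Vert_{L^2(\domain)}+|\cdot|_{H^m(\domain)})$, exactly as in your second alternative for handling the intermediate seminorms.
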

\begin{proof}
An application of the Gagliardo--Nirenberg inequality~\cite{Ni66} yields
\begin{equation}
\Vert\deformation-\Id\Vert_{H^m(\domain)}\leq C(\Vert\deformation-\Id\Vert_{L^2(\domain)}+|\deformation-\Id|_{H^m(\domain)})\,.
\label{eq:mGrowthDisplacement}
\end{equation}
The last term in \eqref{eq:mGrowthDisplacement} is bounded by
\begin{equation}
|\deformation-\Id|_{H^m(\domain)}=|\deformation|_{H^m(\domain)}\leq\sqrt{\tfrac{{C_\deformation}}{\gamma}}\,.
\label{eq:displacementHigherOrderControl}
\end{equation}
%By using the embedding of $H^m(\domain,\domain)$ into $C^{1,\alpha}(\overline\domain,\overline\domain)$ and the uniform boundedness of the minimizing sequence in $L^2(\domain,\domain)$ we get 
%$\Vert\deformation-\Id\Vert_{C^{1,\alpha}(\overline\domain)}
%\leq C+C\sqrt{{C_\deformation}}\,.$ 
To estimate the lower order term on the right hand side we use Korn's and Poincare's inequality and write 
\begin{equation}
\Vert\deformation-\Id\Vert_{L^2(\domain)}\leq C\Vert\varepsilon[\deformation]-\Id\Vert_{L^2(\domain)}\leq C \sqrt{C_\phi}.
\label{eq:KornEstimate}
\end{equation}
Thus, the lemma follows by combining \eqref{eq:mGrowthDisplacement}, \eqref{eq:displacementHigherOrderControl} and \eqref{eq:KornEstimate}.
\end{proof}
\begin{remark}
The analogous result holds for the boundedness of acceleration i.e. \\ $\int_\domain \energyDensity_A(Da)+\gamma |D^{m}a|^2 \d x \leq C_a$ implies $\|a\|_{H^{m}(\domain)}\leq C\sqrt{C_a}$.
\end{remark}
Now, we show the well-posedness of \eqref{eq:SplinePathenergy}.
\begin{proposition}\label{prop:optimal_phi_existence}
For every $K \in \N$ and every image vector $\imagevec=(\image_0,\dots,\image_K) \in \change{\imageSpace^K_{adm}}$ there exists a deformation vector $\defvec=(\deformation_1,\dots,\deformation_K) \in \deformationSpace^K$ such that
\begin{equation*}
\SplinePathenergy^{\sigma,K,D}[\imagevec,\defvec]=\inf_{\tilde{\defvec} \in \deformationSpace^K} \SplinePathenergy^{\sigma,K,D}[\imagevec,\tilde\defvec].
\end{equation*}
\end{proposition}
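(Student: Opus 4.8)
The plan is to argue by the direct method of the calculus of variations. First note that the infimum is finite and nonnegative: choosing $\tilde\defvec=(\Id,\dots,\Id)\in\deformationSpace^K$ gives $a_k=0$ and $\hat w_k=K^2(u_{k+1}-2u_k+u_{k-1})\in L^2(\domain,\R^c)$ since each $u_k\in\imageSpace$, so $\SplinePathenergy^{\sigma,K,D}[\imagevec,\tilde\defvec]<\infty$. Let $\defvec^n=(\deformation_1^n,\dots,\deformation_K^n)$ be a minimizing sequence with $\SplinePathenergy^{\sigma,K,D}[\imagevec,\defvec^n]\leq\inf+1$. Because $\sigma>0$, the regularizing term $\sigma\Pathenergy^{K,D}$ yields a uniform bound on $\int_\domain\energyDensity_D(D\deformation_k^n)+\gamma|D^m\deformation_k^n|^2\d x$ for each $k$, so Lemma~\ref{lemma:sobolev_upper_bound} shows that $(\deformation_k^n)_n$ is bounded in $H^m(\domain,\R^d)$. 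Passing to a (non‑relabeled) subsequence, $\deformation_k^n\rightharpoonup\deformation_k$ weakly in $H^m(\domain,\R^d)$ for each $k$; since $m>1+\frac d2$ the embedding $H^m(\domain)\hookrightarrow C^{1,\alpha}(\overline\domain)$ is compact, hence $\deformation_k^n\to\deformation_k$ strongly in $C^1(\overline\domain,\R^d)$. The defining constraints of $\admset$ pass to the limit under $C^1$‑convergence: $\det(D\deformation_k)=\lim_n\det(D\deformation_k^n)\geq\epsilon$ uniformly, $\deformation_k=\Id$ on $\partial\domain$, and $\deformation_k\in H^m(\domain,\domain)$. Thus $\defvec=(\deformation_1,\dots,\deformation_K)\in\deformationSpace^K$.

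Next I would pass to the limit in the derived quantities $a_k$ and $\hat w_k$. Since each $\deformation_k^n$ maps $\overline\domain$ into $\overline\domain$ and $\deformation_{k+1}^n\to\deformation_{k+1}$, $\deformation_k^n\to\deformation_k$ in $C^1(\overline\domain,\R^d)$, the chain rule together with uniform continuity of the first derivatives on $\overline\domain$ gives $\deformation_{k+1}^n\circ\deformation_k^n\to\deformation_{k+1}\circ\deformation_k$ in $C^1(\overline\domain,\R^d)$; consequently $a_k^n=K^2(\deformation_{k+1}^n\circ\deformation_k^n-2\deformation_k^n+\Id)\to a_k$ in $C^1(\overline\domain,\R^d)$, in particular $Da_k^n\to Da_k$ uniformly. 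For the image compositions, which involve only $L^2$‑functions, I would invoke the standard change‑of‑variables estimate of \cite{BeEf14,EfKoPo19a}: approximating $u_k\in L^2(\domain,\R^c)$ by continuous functions and using the uniform lower bound $\det(D\deformation_k^n)\geq\epsilon$ together with the $C^1$‑convergence yields $u_k\circ\deformation_k^n\to u_k\circ\deformation_k$ strongly in $L^2(\domain,\R^c)$; applying the same argument to $\deformation_{k+1}^n\circ\deformation_k^n$, whose Jacobian determinant is bounded below by $\epsilon^2$, gives $u_{k+1}\circ\deformation_{k+1}^n\circ\deformation_k^n\to u_{k+1}\circ\deformation_{k+1}\circ\deformation_k$ in $L^2(\domain,\R^c)$. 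Hence $\hat w_k^n\to\hat w_k$ and $u_k\circ\deformation_k^n-u_{k-1}\to u_k\circ\deformation_k-u_{k-1}$ strongly in $L^2(\domain,\R^c)$.

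Finally I would establish lower semicontinuity of the energy term by term, using that a finite sum of (lower semi)continuous functionals is lower semicontinuous. The densities $\energyDensity_D(D\deformation_k^n)=|\varepsilon[\deformation_k^n]-\Id|^2$ and $\energyDensity_A(Da_k^n)=|\varepsilon[a_k^n]|^2$ converge uniformly on the bounded set $\domain$, hence in $L^1(\domain)$, and the $\tfrac1\delta$‑terms converge in $L^1(\domain)$ by the $L^2$‑convergences above. For the higher‑order terms, $\deformation\mapsto\gamma\int_\domain|D^m\deformation|^2\d x$ is weakly lower semicontinuous on $H^m$ since the seminorm is convex and strongly continuous; for the acceleration I would first note that $(a_k^n)_n$ is bounded in $H^m(\domain,\R^d)$ — either via the remark following Lemma~\ref{lemma:sobolev_upper_bound} applied to $a_k^n$, which vanishes on $\partial\domain$ because the $\deformation_k^n$ agree with $\Id$ there, or via Sobolev composition estimates controlling $\|\deformation_{k+1}^n\circ\deformation_k^n\|_{H^m}$ through the $H^m$‑ and $C^1$‑bounds — so that along a further subsequence $a_k^n\rightharpoonup\tilde a_k$ in $H^m$; uniqueness of weak/strong $L^2$ limits forces $\tilde a_k=a_k$, whence $a_k\in H^m(\domain,\R^d)$ and $\gamma\int_\domain|D^m a_k|^2\d x\leq\liminf_n\gamma\int_\domain|D^m a_k^n|^2\d x$. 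Summing all contributions gives $\SplinePathenergy^{\sigma,K,D}[\imagevec,\defvec]\leq\liminf_n\SplinePathenergy^{\sigma,K,D}[\imagevec,\defvec^n]=\inf_{\tilde\defvec\in\deformationSpace^K}\SplinePathenergy^{\sigma,K,D}[\imagevec,\tilde\defvec]$, so $\defvec$ is a minimizer. I expect the main obstacle to be the two places involving compositions: the $L^2$‑convergence $u\circ\deformation^n\to u\circ\deformation$ for merely $L^2$‑regular images, which rests essentially on the uniform Jacobian lower bound provided by $\admset$, and the need to control $a_k$ in $H^m$ rather than only $C^1$ so that its top‑order term is weakly lower semicontinuous.
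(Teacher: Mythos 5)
Your proposal is correct and follows essentially the same route as the paper's proof: the direct method with the identity deformation vector as a finite-energy competitor, Lemma~\ref{lemma:sobolev_upper_bound} for the uniform $H^m$ bound, weak $H^m$ / strong $C^{1,\alpha}$ convergence of a subsequence, identification of the limit acceleration via the composition formula, weak lower semicontinuity of the $H^m$-seminorms, and the smooth-approximation-plus-transformation-formula argument (applied in a nested fashion for the double warp) to pass to the limit in the $L^2$ matching terms. Your added care about why $a_k^n$ is controlled in $H^m$ and vanishes on $\partial\domain$ is a correct elaboration of what the paper states as "analogously".
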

\begin{proof} 
Let $\{\defvec^j\}_{j \in \N} \in \deformationSpace^K$ be a sequence for which it holds $\lim_{j \to \infty}\!\SplinePathenergy^{\sigma,K,D}[\imagevec,\defvec^j]\!=\! \inf_{\tilde{\defvec} \in \deformationSpace^K} \!\SplinePathenergy^{\sigma,K,D}[\imagevec,\tilde\defvec]$ and $\SplinePathenergy^{\sigma,K,D}[\imagevec,\defvec^j] \leq \overline{\mathbf{F^{\sigma,K}}} \coloneqq \SplinePathenergy^{\sigma,K,D}[\imagevec,\Id^K]$. By Lemma \ref{lemma:sobolev_upper_bound} we have
\begin{equation*}
\|\phi^j_k - \Id\|_{H^m(\domain)} \leq C\sqrt{\change{\dfrac{\overline{\mathbf{F^{\sigma,K}}}}{K}}}, ~ \forall j \in \N,~ k=1,\dots,K.
\end{equation*}
Thus, $\{\defvec^j\}_{j \in \N}$ is uniformly bounded in $H^m(\domain,\domain)^K$. By reflexivity of this space there exists a  subsequence (with the same label) such that $\defvec^j \rightharpoonup \defvec$ in $H^m(\domain,\domain)^K$. By the compact Sobolev embedding, we have $\defvec^j \to \defvec$ in $C^{1,\alpha}(\overline{\domain},\overline{\domain})^K$ \change{for $\alpha \in (0,m-1-\frac{d}{2})$}, which gives us that $\defvec \in \deformationSpace^K$. Analogously, we have boundedness of $\change{\{\accvec^j\}_{j \in \N}}$ in $H^{m}(\domain,\domain)^{K-1}$ and thus a convergent subsequence satisfying  $\accvec^j \rightharpoonup \accvec$ in $H^m(\domain,\domain)^{K-1}$ and $\accvec^j \to \accvec$ in $C^{1,\alpha}(\overline{\domain},\overline{\domain})^{K-1}$. Here, for every $j \in \N$ we have $\accvec^j=(a^j_1,\dots,a^j_{K-1})$ given by \eqref{eq:discrete_acceleration_definition}. From the strong convergence of deformations we have that $a_k=K^2(\phi_{k+1} \circ \phi_k - 2 \phi_k + \Id)$ for all $k=1,\dots,K-1$.
Using weak lower semicontinuity of $H^m$-seminorm and continuity of energy densities we have for all $k$, as $j \to \infty$:
\begin{align}
&\liminf |\deformation^j_k|_{H^{m}} \geq |\deformation_k|_{H^{m}}, ~~ 
\|\energyDensity_D(D\deformation^j_k)\|_{L^1(\domain)} \to \|\energyDensity_D(D\deformation_k)\|_{L^1(\domain)},\nonumber\\
&\liminf |a^j_k|_{H^{m}} \geq |a_k|_{H^{m}}, ~~
\|\energyDensity_A(Da^j_k)\|_{L^1(\domain)} \to \|\energyDensity_A(Da_k)\|_{L^1(\domain)}.\label{eq:R_terms_convergence}
\end{align}
To handle \change{the} rest of the terms we show that for all $u \in \imageSpace$ we have $u \circ \phi^j \to u \circ \phi$ in $\imageSpace$ for $\{\phi^j\}_{j \in \N} \in \deformationSpace$ with $\phi^j \to \phi$ in $C^1(\overline{\domain},\overline{\domain})$ as $j \to \infty$.
To see this 
we approximate $\image$ by smooth functions $\{\tilde\image^i\}_{\change{i \in \N}}\in C^\infty(\overline\domain,\overline\domain)$ with
$\Vert\image-\tilde\image^i\Vert_{\imageSpace}\to 0$.
Then, using the transformation formula we obtain
\begin{align}
&\Vert\image\circ\deformation^j-\image\circ\deformation\Vert_{\imageSpace} \nonumber\\
\leq&
\Vert\image\circ\deformation^j\!-\!\tilde\image^i\circ\deformation^j\Vert_{\imageSpace}\!+\!\Vert\tilde\image^i\circ\deformation\!-\!\image\circ\deformation\Vert_{\imageSpace} 
\!+\!\Vert\tilde\image^i\circ\deformation^j\!-\!\tilde\image^i\circ\deformation\Vert_{\imageSpace} \nonumber\\
\leq&
\Vert\image-\tilde\image^i\Vert_{\imageSpace}\Big(\Vert\det(D\deformation^j)\Vert_{L^\infty(\domain)}^{-\frac12}
\!+\!\Vert\det(D\deformation)\Vert_{L^\infty(\domain)}^{-\frac12}\Big) 
+\Vert D\tilde\image^i\Vert_{L^\infty(\domain)}\Vert\deformation^j-\deformation\Vert_{L^2(\domain)}\,. \label{eq:fixed_u_composition_convergence}
\end{align}
By first choosing~$i$ and then~$j$ we have that this expression converges to $0$. Hence, for every $k=1,\dots,K$ we have 
$\|\image_k \circ \deformation^j_k - \image_{k-1}\|_\imageSpace \to \|\image_k \circ \deformation_k - \image_{k-1}\|_\imageSpace$. Furthermore, via nested application of \eqref{eq:fixed_u_composition_convergence} $\|\image_{k+1} \circ \deformation^j_{k+1} \circ \deformation^j_k - 2 \image_k \circ \deformation^j_k + \image_{k-1}\|_{\imageSpace} \to \|\image_{k+1} \circ \deformation_{k+1} \circ \deformation_k - 2 \image_k \circ \deformation_k + \image_{k-1}\|_{\imageSpace}$ for $k=1,\dots,K-1$, which together with \eqref{eq:R_terms_convergence} finishes the proof.
\end{proof}
In the next step, under suitable conditions, we prove that there exists a minimizing vector in $\imageSpace^K_{adm}$ (see \eqref{eq:admissible_images}) for a fixed deformation vector $\defvec \in \admset^K$.
\begin{proposition}\label{prop:optimal_u_existence}
Let $K \geq 2$, $\imageSpace^K_{f}$ and $\defvec \in \admset^K$ be fixed. Assume that the deformations satisfy, for every $x \in \domain$,
\begin{equation}\label{eq:bound_determinant}
C_{\det}\!\geq\! \det(D\deformation_k(x))\!\geq\! c_{\det}>0, ~k=1,\dots,K.
\end{equation}
Then there exists a vector of images $\imagevec \in \imageSpace^K_{adm}$ such that
\begin{equation*}
\SplinePathenergy^{\sigma,K,D}[\imagevec,\defvec]=\inf_{\change{\mathbf{v} \in \imageSpace^K_{adm}}}\SplinePathenergy^{\sigma,K,D}[\change{\mathbf{v}},\defvec].
\end{equation*}
\end{proposition}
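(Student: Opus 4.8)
The plan is to proceed by the direct method in the calculus of variations, holding the deformation vector $\defvec$ fixed and minimizing the energy over $\imagevec \in \imageSpace^K_{adm}$. First I would take a minimizing sequence $\{\imagevec^j\}_{j\in\N} = \{(u_0^j,\dots,u_K^j)\}_{j\in\N} \subset \imageSpace^K_{adm}$ with $\SplinePathenergy^{\sigma,K,D}[\imagevec^j,\defvec] \to \inf_{\mathbf{v}\in\imageSpace^K_{adm}}\SplinePathenergy^{\sigma,K,D}[\mathbf{v},\defvec]$, and I would normalize so that all terms of the energy are uniformly bounded. The key structural observation is that the energy, as a function of $\imagevec$ for fixed $\defvec$, only involves the $L^2$ norms of the discrete material derivatives $u_k\circ\deformation_k - u_{k-1}$ and the discrete second material derivatives $\hat w_k = K^2(u_{k+1}\circ\deformation_{k+1}\circ\deformation_k - 2u_k\circ\deformation_k + u_{k-1})$, all of which are \emph{affine} in the tuple $\imagevec$. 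The determinant bounds \eqref{eq:bound_determinant} ensure that each composition operator $u \mapsto u\circ\deformation_k$ is a bounded linear isomorphism of $\imageSpace = L^2(\domain,\R^c)$ with $\|u\circ\deformation_k\|_{\imageSpace}^2 \ge C_{\det}^{-1}\|u\|_{\imageSpace}^2$ and $\le c_{\det}^{-1}\|u\|_{\imageSpace}^2$ (by the transformation formula), so no regularity is gained or needed here beyond $L^2$.

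Next I would establish uniform $\imageSpace$-boundedness of the sequence $\{\imagevec^j\}$. The $J^K$ prescribed frames $u_{i_j}^j = u_j^I$ are fixed, hence bounded. From the bound on $\frac1\delta\|u_k\circ\deformation_k - u_{k-1}\|_{\imageSpace}^2$ and the equivalence of norms under composition, one propagates a bound from each prescribed index to its neighbors: $\|u_k^j\|_{\imageSpace} \le C(\|u_{k-1}^j\|_{\imageSpace} + \|u_k^j\circ\deformation_k - u_{k-1}^j\|_{\imageSpace})$ and similarly in the backward direction, using that $c_{\det}>0$. Since $J^K \ge 2$ and every index in $\{0,\dots,K\}$ can be reached from some $i_j$ by a chain of at most $K$ unit steps, a finite induction gives $\|u_k^j\|_{\imageSpace} \le C$ for all $k$ and all $j$, with $C$ depending on the $u_j^I$, the energy bound, $K$, $\delta$, $C_{\det}$, $c_{\det}$. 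By reflexivity of the Hilbert space $\imageSpace^{K+1}$, a subsequence (not relabeled) satisfies $\imagevec^j \rightharpoonup \imagevec$ weakly in $\imageSpace^{K+1}$; the constraint set $\imageSpace^K_{adm}$ is a closed affine subspace, hence weakly closed, so $\imagevec \in \imageSpace^K_{adm}$.

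Finally I would pass to the limit in the energy. The terms involving only $\defvec$, namely $\int_\domain \energyDensity_A(Da_k) + \gamma|D^m a_k|^2 \d x$ and $\sigma\int_\domain \energyDensity_D(D\deformation_k) + \gamma|D^m\deformation_k|^2 \d x$, are constants and pass trivially. For the remaining terms, each of the maps $\imagevec \mapsto u_k\circ\deformation_k - u_{k-1}$ and $\imagevec \mapsto \hat w_k$ is a bounded \emph{linear} operator from $\imageSpace^{K+1}$ into $L^2(\domain,\R^c)$ (boundedness again via the determinant bounds and the transformation formula), hence weak-to-weak continuous; composing with the convex, weakly lower semicontinuous functional $\zeta \mapsto \|\zeta\|_{L^2(\domain)}^2$ yields weak lower semicontinuity of $\imagevec \mapsto \frac1\delta\|u_k\circ\deformation_k - u_{k-1}\|_{\imageSpace}^2$ and of $\imagevec \mapsto \frac1\delta\|\hat w_k\|_{\imageSpace}^2$. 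Summing over $k$ gives $\SplinePathenergy^{\sigma,K,D}[\imagevec,\defvec] \le \liminf_{j\to\infty}\SplinePathenergy^{\sigma,K,D}[\imagevec^j,\defvec]$, so $\imagevec$ is the desired minimizer.

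I expect the only real subtlety to be the a priori $\imageSpace$-bound on the minimizing sequence: one must use the interpolation constraints together with the lower determinant bound $c_{\det}>0$ to control the compositions $u_k\circ\deformation_k$ from below and thereby chain the bound across all indices — this is exactly where the hypothesis \eqref{eq:bound_determinant} is essential, and it is the step I would write out most carefully. Everything else (weak compactness, weak closedness of the affine constraint, linearity-plus-convexity lower semicontinuity) is routine once that bound is in hand.
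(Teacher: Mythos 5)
Your proposal is correct and follows essentially the same route as the paper: a minimizing sequence, a uniform $\imageSpace$-bound propagated from the prescribed frames via the determinant bounds \eqref{eq:bound_determinant}, weak compactness and weak closedness of $\imageSpace^K_{adm}$, and weak lower semicontinuity of the matching terms via weak-to-weak continuity of the composition operators (the paper verifies this by explicitly computing the adjoint $v\mapsto (v(\det D\deformation_k)^{-1})\circ\deformation_k^{-1}$ with the transformation formula, which is your abstract argument made concrete). The only detail worth writing out that you gloss over is the finiteness of the infimum, which the paper secures by evaluating the energy at the piecewise-constant competitor built from the key frames.
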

\begin{proof}
Let $\change{\{\imagevec^j\}}_{j \in \N} \in \imageSpace^K_{adm}$ be an approximation sequence such that
$$\lim_{j\to \infty} \SplinePathenergy^{\sigma,K,D}[\imagevec^j,\defvec]=\inf_{\change{\mathbf{v}} \in \imageSpace^K_{adm}}\SplinePathenergy^{\sigma,K,D}[\change{\mathbf{v}},\defvec]\leq\overline{\SplinePathenergy^{\sigma,K,D}}.$$
Here, $\overline{\SplinePathenergy^{\sigma,K,D}}\coloneqq \SplinePathenergy^{\sigma,K,D}[\mathbf{\overline{u}},\defvec]$ represents a finite upper bound for the energy with the vector of images $\mathbf{\overline{u}}$ satisfying $\overline{u}_k=\image^I_1$ for $\change{0 \leq k \leq i_1},$ $\overline{u}_k=\image^I_{j}$ for \change{$i_j \leq k \leq i_{j+1}$ with $1 \leq j \leq J^K-1$ and $\overline{u}_k=u^I_{J^K}$ for $ i_{J^K} \leq k \leq K$}. 
%$\mathbf{\overline{u}}=({u}^I_{1},\dots,{u}^I_{1},{u}^I_{2},\dots,{u}^I_{{J^K-1}},{u}^I_{{J^K}},\dots)$.
 Indeed, we have
\begin{align*}
&\overline{\SplinePathenergy^{\sigma,K,D}}\\
\leq& \sum_{k=1}^{K-1} \frac{1}{K} (\|\energyDensity_A(Da_k)\|_{L^1({\domain})} +\gamma \|a_k\|^2_{H^{m}})
 + \sigma \sum_{k=1}^K K(\|\energyDensity_D(D\deformation_k)\|_{L^1(\domain)}+\gamma \|\deformation_k\|^2_{H^{m}}) +K^3(1+c_{\det}^{-1})^2\sum_{j=1}^{J^K} \|{u}^I_{j}\|_{\imageSpace}^2,
\end{align*}
where we used the transformation formula and \eqref{eq:bound_determinant}. By further use of \eqref{eq:bound_determinant} we have
\begin{align}
&\Vert\image_k^j\Vert_{\imageSpace}\leq\Vert\image_{k+1}^j\circ\deformation_{k+1}-\image_k^j\Vert_{\imageSpace}+\Vert\image_{k+1}^j\circ\deformation_{k+1}\Vert_{\imageSpace}  \leq\sqrt{\tfrac{\delta\overline{\SplinePathenergy^{\sigma,K,D}}}{K}}+c_{\det}^{-\frac{1}{2}}\Vert\image_{k+1}^j\Vert_{\imageSpace}\,,
\nonumber\\
&\Vert\image_{k+1}^j\Vert_{\imageSpace}\!\leq\! C_{\det}^{-\frac12}\Vert\image_{k+1}^j\!\circ\!\deformation_{k+1}\Vert_{\imageSpace} \!\leq\!  C_{\det}^{-\frac12}\left(\!\Vert\image_{k+1}^j\!\circ\!\deformation_{k+1}\!-\!\image_k^j\Vert_{\imageSpace}\!+\!\Vert\image_k^j\Vert_{\imageSpace}\!\right) \!\leq\! C_{\det}^{-\frac12}\!\left(\!\sqrt{\tfrac{\delta\overline{\SplinePathenergy^{\sigma,K,D}}}{K}}\!+\!\Vert\image_{k}^j\Vert_{\imageSpace}\!\right), \label{eq:uniformBoundImagesInduction}
\end{align}
from where we have, by induction, that $\change{\{u^j_k\}_{j \in \N}}$ is uniformly bounded in $\imageSpace$ for every $k=0,\dots,K$. By reflexivity, there exists a subsequence (\change{labeled} in the same way) such that $u^j_k \rightharpoonup u_k$ for some $\imagevec \in \imageSpace^K_{adm}$.
It remains to verify the weak lower semicontinuity of the matching functional, i.e. we have to show that
\begin{align}
\Vert\image_k\circ\deformation_k-\image_{k-1}\Vert_{\imageSpace}^2\leq&\liminf_{j\to\infty}\Vert\image^j_k\circ\deformation_k-\image^j_{k-1}\Vert_\imageSpace^2, \label{eq:lowerSemicontinuityMatching1}\\
\Vert\image_{k+1}\circ\deformation_{k+1}\circ \deformation_k-2\image_k\circ \deformation_k+\image_{k-1}\Vert_{\imageSpace}^2 
\leq&\liminf_{j\to\infty}\Vert\image_{k+1}^j\circ\deformation_{k+1}\circ \deformation_k-2\image_k^j\circ \deformation_k+\image_{k-1}^j\Vert_{\imageSpace}^2, \label{eq:lowerSemicontinuityMatching2}
\end{align}
for every $k=1,\dots,K$ and $k=1,\dots,K-1$, respectively.
To this end, we first show $\image_k^j\circ\deformation_k\rightharpoonup\image_k\circ\deformation_k$ in~$\imageSpace$.
For every $v \in \imageSpace$ the transformation formula yields
\begin{align*}
&\int_{\domain}(\image^j_k\circ\deformation_k-\image_k\circ\deformation_k)\cdot v\d x
=\int_{\domain}(\image^j_k-\image_k)\cdot(v(\det (D\deformation_k))^{-1})\circ\deformation_k^{-1}\d x\,,
\end{align*}
which converges to $0$ since $(v(\det (D\deformation_k))^{-1})\circ\deformation_k^{-1}\in\imageSpace$ due to~\eqref{eq:bound_determinant}.
Hence, $\image_k^j\circ\deformation_k-\image_{k-1}^j\rightharpoonup\image_k\circ\deformation_k-\image_{k-1}$ in~$\imageSpace$,
which readily implies~\eqref{eq:lowerSemicontinuityMatching1} and by applying the same technique in a nested fashion we get \eqref{eq:lowerSemicontinuityMatching2}.
Altogether, we have
\begin{equation*}
\liminf_{j \to \infty} \SplinePathenergy^{\sigma,K,D}[\imagevec^j,\defvec] \geq \SplinePathenergy^{\sigma,K,D}[\imagevec,\defvec],
\end{equation*}
from where the optimality follows.
\end{proof}
We are now in the position to show the existence of discrete \change{time spline interpolations}.
\begin{theorem}[\change{Existence of discrete time spline interpolations}]\label{thm:discrete_spline_existence}
Let $K\geq 2$. Then for every $\imageSpace^K_{f}$
there exists $\imagevec \in \imageSpace^K_{adm}$ such that
\begin{equation*}
\SplinePathenergy^{\sigma,K}[\imagevec]=\inf_{\mathbf{v} \in \imageSpace^K_{adm}} \SplinePathenergy^{\sigma,K}[\mathbf{v}].
\end{equation*}
\end{theorem}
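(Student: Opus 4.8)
\emph{Proof strategy.} The plan is to run the direct method in the calculus of variations, stacking the two partial existence results already proven. First I would fix a minimizing sequence $\{\imagevec^j\}_{j\in\N}\subset\imageSpace^K_{adm}$ for $\SplinePathenergy^{\sigma,K}$. The infimum is finite because the piecewise constant image vector $\overline{\imagevec}$ used in the proof of Proposition~\ref{prop:optimal_u_existence} together with $\defvec=\Id^K$ has finite energy, so we may assume $\SplinePathenergy^{\sigma,K}[\imagevec^j]\le C$ uniformly in $j$. By Proposition~\ref{prop:optimal_phi_existence} each $\imagevec^j$ possesses an optimal deformation vector $\defvec^j\in\admset^K$ realising $\SplinePathenergy^{\sigma,K,D}[\imagevec^j,\defvec^j]=\SplinePathenergy^{\sigma,K}[\imagevec^j]$. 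Here the regularisation matters: the $\sigma$-term controls $\int_\domain \energyDensity_D(D\deformation_k^j)+\gamma|D^m\deformation_k^j|^2\d x\le \tfrac{C}{\sigma K}$, so Lemma~\ref{lemma:sobolev_upper_bound} gives a uniform $H^m(\domain,\domain)$ bound on $\deformation_k^j-\Id$. Passing to a subsequence, $\deformation_k^j\rightharpoonup\deformation_k$ in $H^m$ and, by the compact embedding, $\deformation_k^j\to\deformation_k$ in $C^{1,\alpha}(\overline\domain,\overline\domain)$ for $\alpha\in(0,m-1-\tfrac d2)$; in particular the limit $\defvec$ lies in $\admset^K$ and one has uniform two-sided determinant bounds $0<\epsilon\le\det(D\deformation_k^j)\le C_{\det}$, with $\epsilon$ from \eqref{eq:admset} and $C_{\det}$ coming from the uniform $C^1$ bound.

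Second I would get compactness for the images. With the two-sided determinant bounds available and the matching contributions $\tfrac1\delta\|u_k^j\circ\deformation_k^j-u_{k-1}^j\|_\imageSpace^2$ (and the $\hat w_k^j$ terms) uniformly bounded by the energy, I repeat the induction \eqref{eq:uniformBoundImagesInduction} from the proof of Proposition~\ref{prop:optimal_u_existence} almost verbatim: starting from a prescribed interpolation value $u_{i_1}^j=u_1^I$ and propagating the estimate both forward and backward through the finitely many indices $k=0,\dots,K$, one obtains $\|u_k^j\|_\imageSpace\le C$ for all $k$ and $j$. Extracting a further subsequence gives $u_k^j\rightharpoonup u_k$ in $\imageSpace$ for every $k$; the interpolation conditions $u_{i_j}=u_j^I$ are linear, hence stable under weak limits, so $\imagevec=(u_0,\dots,u_K)\in\imageSpace^K_{adm}$.

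Third, and this is the crux, I would establish the joint weak lower semicontinuity $\SplinePathenergy^{\sigma,K,D}[\imagevec,\defvec]\le\liminf_{j}\SplinePathenergy^{\sigma,K,D}[\imagevec^j,\defvec^j]$; combined with $\SplinePathenergy^{\sigma,K}[\imagevec]\le\SplinePathenergy^{\sigma,K,D}[\imagevec,\defvec]$ (definition \eqref{eq:SplinePathenergy}) and $\imagevec\in\imageSpace^K_{adm}$ this finishes the proof. For the elastic and higher-order parts the strong $C^{1,\alpha}$ convergence yields $\|\energyDensity_D(D\deformation_k^j)\|_{L^1}\to\|\energyDensity_D(D\deformation_k)\|_{L^1}$, and by the chain rule $a_k^j=K^2(\deformation_{k+1}^j\circ\deformation_k^j-2\deformation_k^j+\Id)\to a_k$ in $C^1$; since $\{a_k^j\}$ is $H^m$-bounded (Remark after Lemma~\ref{lemma:sobolev_upper_bound}) its weak $H^m$-limit must be $a_k$, so $\liminf_j|a_k^j|_{H^m}\ge|a_k|_{H^m}$, $\liminf_j|\deformation_k^j|_{H^m}\ge|\deformation_k|_{H^m}$, and $\|\energyDensity_A(Da_k^j)\|_{L^1}\to\|\energyDensity_A(Da_k)\|_{L^1}$. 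The hard part is the composed images. I would prove $u_k^j\circ\deformation_k^j\rightharpoonup u_k\circ\deformation_k$ in $\imageSpace$ by splitting $u_k^j\circ\deformation_k^j-u_k\circ\deformation_k=(u_k^j-u_k)\circ\deformation_k^j+(u_k\circ\deformation_k^j-u_k\circ\deformation_k)$: the second term goes to $0$ in $\imageSpace$ exactly as in \eqref{eq:fixed_u_composition_convergence}, while for the first I test against $v\in\imageSpace$ and use the transformation formula, $\int_\domain(u_k^j-u_k)\circ\deformation_k^j\cdot v\d x=\int_\domain(u_k^j-u_k)\cdot\big(v(\det D\deformation_k^j)^{-1}\big)\circ(\deformation_k^j)^{-1}\d x$, where the uniform determinant bounds and the $C^1$ convergence of $\deformation_k^j$ (hence of $(\deformation_k^j)^{-1}$) force the right-hand factor to converge strongly in $\imageSpace$, so a weak-times-strong pairing drives the integral to $0$. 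Applying this in a nested fashion (as in Proposition~\ref{prop:optimal_phi_existence}) also to $u_{k+1}^j\circ\deformation_{k+1}^j\circ\deformation_k^j$, the discrete image velocities $u_k^j\circ\deformation_k^j-u_{k-1}^j$ and image accelerations $\hat w_k^j$ converge weakly in $\imageSpace$, and weak lower semicontinuity of $\|\cdot\|_\imageSpace^2$ gives the inequalities for the $\tfrac1\delta|\cdot|^2$ terms. Summing all contributions yields $\SplinePathenergy^{\sigma,K,D}[\imagevec,\defvec]\le\liminf_j\SplinePathenergy^{\sigma,K}[\imagevec^j]=\inf_{\mathbf v\in\imageSpace^K_{adm}}\SplinePathenergy^{\sigma,K}[\mathbf v]$, so $\imagevec$ is the desired minimiser. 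I expect the weak-strong composition step (and in particular verifying that the pushed-forward test functions converge strongly, which is precisely where the uniform two-sided determinant bounds from step one are needed) to be the only nonroutine point.
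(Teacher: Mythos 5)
Your proposal is correct and follows essentially the same route as the paper's proof: the direct method, using Proposition~\ref{prop:optimal_phi_existence} to attach optimal deformations to a minimizing sequence, Lemma~\ref{lemma:sobolev_upper_bound} for $H^m$-compactness of the deformations, the induction \eqref{eq:uniformBoundImagesInduction} for uniform $L^2$-bounds on the images, and the decomposition \eqref{eq:decomposition_approximation} with the transformation-formula/weak-times-strong pairing to get $u_k^j\circ\deformation_k^j\rightharpoonup u_k\circ\deformation_k$. The only cosmetic differences are your choice of the piecewise constant competitor (versus the paper's sampled smooth curve, whose $K$-independent bound is only needed later for the Mosco argument) and that the paper additionally replaces each $\imagevec^j$ by an energy-optimal image vector via Proposition~\ref{prop:optimal_u_existence}, which is not essential to the estimate.
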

\begin{proof}
Consider a sequence $\{\imagevec^j\}_{j \in \N} \in \imageSpace_{adm}^K $ for which it holds $\lim_{j \to \infty} \SplinePathenergy^{\sigma,K}[\imagevec^j]=\inf_{\mathbf{v} \in \imageSpace^K_{adm}} \SplinePathenergy^{\sigma,K}[\mathbf{v}] \leq\overline{\SplinePathenergy^{\sigma,K}}$, 
%where $\overline{\SplinePathenergy^{\sigma,K}}\coloneqq \SplinePathenergy^{\sigma,K,D}[\overline{\imagevec^I},\Id^K]$ and $\overline{\imagevec^I}$ is a vector of images made by linear interpolation between each two consecutive fixed images (see \eqref{eq:admissible_images}). We have 
%$$\overline{\SplinePathenergy^{\sigma,K}}\leq \left(\frac{\sigma}{\delta} + \frac{2K}{\delta (t^K_{j+1}-t^K_j)^2}\right)\sum_{j=1}^{J^K-1}\|\image^I_{{j+1}}-\image^I_{j}\|^2<\infty,
%$$
%where $t^K_j=\frac{t_j}{K}$.
\change{Here, $\overline{\image}_k=\overline{\image}({\frac{k}{K}},\cdot)$ for $k=0,\dots,K$, where $\overline{\image}$ is a smooth in time curve with $\bar{u}({\frac{i_j}{K}},\cdot)=u^I_j$ for every $j=1,\dots,J^K$.
Then we have
\begin{align}
 \mathbf{F}^{\sigma,K,D}[\overline{\imagevec}^K,\Id^K] 
=&\sigma K\sum_{k=0}^K \int_\domain|\overline{\image}^K_{k+1}-\overline{\image}^K_k|^2 \d x +K^3 \sum_{k=1}^{K-1} \int_\domain |\overline{\image}^K_{k+1}-2\overline{\image}^K_k + \overline{\image}^K_{k-1}|^2 \d x \nonumber\\
\leq & C\left(\int_{\domain} |\overline{\image}|^2_{H^1((0,1))}+|\overline{\image}|^2_{H^2((0,1))} \d x +1\right), \label{eq:finite_upper_bound_discrete}
\end{align}
where the constant is independent of $K$.}
Furthermore, for every $j$ let $\SplinePathenergy^{\sigma,K}[\imagevec^j]=\SplinePathenergy^{\sigma,K,D}[\imagevec^j,\defvec^j]$, \change{by Proposition~\ref{prop:optimal_phi_existence}}.
As in the proof of Proposition~\ref{prop:optimal_phi_existence} we have  
weak convergence of $\change{\{\defvec^j\}_{j \in \N}}$ in $H^{m}(\domain,\domain)^K$ and strong in $C^{1,\alpha}(\overline\domain,\overline\domain)^K$ to some $\defvec \in \deformationSpace^K$. Furthermore, we again have $a^j_k \rightharpoonup a_k$ in $H^{m}(\domain,\domain)$, and strongly in $C^{1,\alpha}(\overline\domain,\overline\domain)$, where $a_k= K^2(\deformation_{k+1} \circ \deformation_k -2\deformation_k + \Id)$ and estimates from \eqref{eq:R_terms_convergence} are \change{satisfied}.
%This already implies the pointwise bound on determinant of derivatives and thus $\defvec \in \admset^K$ and the upper bound needed in \eqref{eq:bound_determinant} is satisfied. 
%This already implies that
%\begin{align}
%&\liminf |\deformation^j_k|_{H^{m}} \geq |\deformation_k|_{H^{m}},~
%\int_{\domain}\energyDensity_D(D\deformation^j_k) \to \int_{\domain}\energyDensity_D(D\deformation_k),\\
%&\liminf |a^j_k|_{H^{m-1}} \geq |a_k|_{H^{m-1}},~
%\int_{\domain}\energyDensity_A(Da^j_k) \to \int_{\domain}\energyDensity_A(Da_k).
%\end{align}
By Proposition~\ref{prop:optimal_u_existence} we may replace $\imagevec^j$ by an energy optimal image vector. Keeping the same label and following the same arguments as in \eqref{eq:uniformBoundImagesInduction} we conclude that $\change{\{\imagevec^j\}_{j \in \N}}$ is uniformly bounded in $\imageSpace$, weakly converging to $\imagevec$.
We are left to verify the estimates
\begin{align*}
\Vert\image_k\circ\deformation_k-\image_{k-1}\Vert_{\imageSpace}^2\leq&\liminf_{j\to\infty}\Vert\image^j_k\circ\deformation^j_k-\image^j_{k-1}\Vert_\imageSpace^2,\\
\Vert\image_{k+1}\circ\deformation_{k+1}\circ \deformation_k-2\image_k\circ \deformation_k+\image_{k-1}\Vert_{\imageSpace}^2 
\leq&\liminf_{j\to\infty}\Vert\image_{k+1}^j\circ\deformation^j_{k+1}\circ \deformation^j_k-2\image_k^j\circ \deformation^j_k+\image_{k-1}^j\Vert_{\imageSpace}^2,
\end{align*}
for every $k=1,\dots,K$ and $k=1,\dots,K-1$, respectively. To that end, it is enough to show $\image^j_k\circ\deformation^j_k \rightharpoonup \image_k\circ\deformation_k$ in $\imageSpace$.
To see this, we first take into account the decomposition
\begin{equation}\label{eq:decomposition_approximation}
\image^j_k\circ\deformation^j_k\!-\!\image_k\circ\deformation_k \!=\! (\image^j_k\circ\deformation^j_k\!-\!\image_k\circ\deformation^j_k)\!+\!(\image_k\circ\deformation^j_k\!-\!\image_k\circ\deformation_k)\,.
\end{equation}
The second term is handled as in \eqref{eq:fixed_u_composition_convergence}.
%To estimate the second term 
%we approximate $\image_k$ by smooth functions $\tilde\image_k^i\in C^\infty(\domain)$ with
%$\Vert\tilde\image-\tilde\image^i\Vert_{\imageSpace}\to 0$.
%Then, using the transformation formula we obtain
%\begin{align*}
%&\Vert\image_k\circ\deformation^j-\image_k\circ\deformation\Vert_{\imageSpace}\\
%\leq&
%\Vert\image_k\circ\deformation^j-\tilde\image_k^i\circ\deformation^j\Vert_{\imageSpace}+\Vert\tilde\image_k^i\circ\deformation^j-\tilde\image_k^i\circ\deformation\Vert_{\imageSpace}
%+\Vert\tilde\image_k^i\circ\deformation-\image_k\circ\deformation\Vert_{\imageSpace}\\
%\leq&
%\Vert\image_k-\tilde\image_k^i\Vert_{\imageSpace}\Big(\Vert\det(D(\deformation^j)^{-1})\Vert_{L^\infty(\domain)}^{\frac12}
%+\Vert\det(D\deformation^{-1})\Vert_{L^\infty(\domain)}^{\frac12}\Big)\\
%&+\Vert D\tilde\image_k^i\Vert_{L^\infty(\domain)}\Vert\deformation^j-\deformation\Vert_{L^2(\domain)}\,.
%\end{align*}
%Finally, by first choosing~$i$ and then~$j$ we have that this expression converges to $0$.
It remains to consider the convergence properties of the first term.
For a test function $v\in\imageSpace$ we obtain using the transformation rule 
\begin{align*}
&\int_\domain(\image^j_k\circ\deformation^j_k-\image_k\circ\deformation^j_k)\cdot v\d x
=\int_\domain(\image^j_k-\image_k)\cdot(v(\det(D\deformation^j_k))^{-1})\circ(\deformation^j_k)^{-1}\d x\,.
\end{align*}
The right hand side converges to~$0$ due to the convergence $(\det(D\deformation^j_k))^{-1}\circ(\deformation^j_k)^{-1}\to \det(D\deformation_k))^{-1}\circ\deformation_k^{-1}$ in $L^\infty(\Omega)$, $v \circ \deformation_k^j \to v \circ \deformation_k$ in $\imageSpace$ and  $\image_k^j \rightharpoonup \image_k$ in $\imageSpace$ for $j\to \infty$. This proves our claim and finally proves the theorem.
\end{proof}
\begin{remark}\label{remark:epsilon0}
The results of this section remain valid for any $\energyDensity_D$ satisfying conditions $(W1)-\change{(W2)}$ from \cite{EfKoPo19a}.
%Furthermore, for large enough $K$ one can show the existence of the discrete spline even if $\epsilon=0$ in the definition of the admissible set. Namely, as in Theorem \ref{thm:discrete_spline_existence} one can construct a finite upper bound $\overline{\mathbf{F}^{\sigma,K}}$ for the spline energy independent of $K$ (see the proof of Theorem \ref{thm:convergence_and_existence}). Then, using Lemma~\ref{lemma:sobolev_upper_bound}, Sobolev embedding theorem and Lipschitz continuity of the determinant we have, for large enough $K$, $\max_{k=1,\dots,K}\|\det(D\phi_k)-1\|_{L^\infty(\domain)}\leq C\sqrt{\frac{\overline{\mathbf{F}^{\sigma,K}}}{K}}<1$, which proves  $\min_{k=1,\dots,K} \det(D\phi_k) \geq c_{\det}>0$ and we can proceed as before.
\change{Furthermore, let us observe the case when $i_j=K \cdot t_j$ and $u_{i_j}=u^I_j$ for $t_j \in [0,1]$ and $u^I_j \in \imageSpace$ for every $j=1,\dots,J$ (cf. \eqref{eq:constraint} and \eqref{eq:admissible_images}). Then for every  large enough $K$ (depending on $u^I_j$ and $t_j$) one can show the existence of the discrete time spline interpolation even if $\epsilon=0$ in the definition of the admissible set \eqref{eq:admset}. Namely, by \eqref{eq:finite_upper_bound_discrete} we have that $\overline{\mathbf{F}^{\sigma,K}}$ is a fixed finite upper bound for the discrete spline energy, independent of $K$. Then, using Lemma~\ref{lemma:sobolev_upper_bound} and Sobolev embedding theorem we have
\begin{align*}
\max_{k=1,\dots,K}\|\phi_k - \Id\|_{C^{1,\alpha}(\overline{\domain})} \leq& C \max_{k=1,\dots,K}\|\phi_k - \Id\|_{H^m({\domain})} 
 \leq C \sqrt{\frac{\overline{\mathbf{F}^{\sigma,K}}}{K}}.
\end{align*}
By Lipschitz continuity of the determinant we have  
\begin{equation*}
\max_{k=1,\dots,K}\|\det(D\phi_k)-1\|_{L^\infty(\domain)}\leq C\sqrt{\frac{\overline{\mathbf{F}^{\sigma,K}}}{K}}<1,
\end{equation*}
for large enough $K$, 
which proves  $\min\limits_{k=1,\dots,K} \det(D\phi_k) \geq c_{\det}>0$ and we can proceed as before.}
\end{remark}
%%%%%%%%%%%%%%%%%%%%%%%%%%%%%%%
\section{Temporal Extension Operators}\label{sec:time_extension}
In this section we define the suitable time extensions of the time discrete quantities from the previous section. This is necessary  to compare discrete and continuous spline functional and to study convergence.

Let $K \geq 2$, $\tau=\frac{1}{K}$, $t^K_k=k\tau$, $t^K_{k\pm \frac12}=(k \pm \frac12)\tau$, $k=0,1,\dots,K$. Furthermore, consider a vector of images $\mathbf{u}^K=(u_0^K,\dots,u^K_K) \in \imageSpace^{K+1}$ and a vector of deformations $\defvec^K=(\phi^K_1,\dots,\phi^K_K) \in \deformationSpace^K $. We first define a \emph{discrete incremental transport path} $y^K$ with $y^K_t=y^K_0(t,\cdot)$ for $t \in [0,t^K_{\frac12}\change{]}$, $y^K_t=y^K_k(t,\cdot)$ for $t \in \change{(}t^K_{k-\frac12},t^K_{k+\frac12}\change{]}$ with $k=1,\dots,K-1$ and $y^K_t=y^K_K(t,\cdot)$ for $t \in \change{(}t^K_{K-\frac12},1]$, where
\begin{align*}
y^K_0(t,\cdot)\change{\coloneqq}&\Id + \tfrac{t}{\tau}(\phi^K_1-\Id)\\
y^K_K(t,\cdot)\change{\coloneqq}&\Id+\tfrac{t-t^K_{K-1}}{\tau}(\phi^K_K-\Id)
\end{align*}
and, for $k=1,\dots,K-1$
\begin{align*}
y^K_k(t,\cdot)\change{\coloneqq}&\tfrac12 (\Id+\phi^K_k)+\tfrac{t-t^K_{k-\frac12}}{\tau}(\phi^K_k-\Id)
+\tfrac{(t-t^K_{k-\frac12})^2}{2\tau^2}\left(\phi^K_{k+1} \circ \phi^K_k -2\phi^K_k +\Id\right).
\end{align*}
%\begin{equation*}
%y^K_k(t,\cdot)\!=\!
%\begin{cases}
%\Id + \tfrac{t}{\tau}(\phi^K_1-\Id), & k=0\\
%\tfrac{\Id+\phi^K_k}{2}+\tfrac{t-t^K_{k-\frac12}}{\tau}(\phi^K_k-\Id)&\\
%+\tfrac{(t-t^K_{k-\frac12})^2}{2\tau^2}\left(\phi^K_{k+1} \circ \phi^K_k -2\phi^K_k +\Id\right),& k=1,\dots,K-1\\
%\Id+\tfrac{t-t^K_{K-1}}{\tau}(\phi^K_K-\Id), & k=K.
%\end{cases}
%\end{equation*}
This can be seen as the cubic Hermite interpolation on intervals $\change{(}t^K_{k-\frac12},t^K_{k+\frac12}]$, and an affine interpolation on $[0,t^K_{\frac12}\change{]}$ and $(t^K_{K-\frac12},1]$. In particular, observe that $y^K_{t^K_{k-\frac12}}=\tfrac{\Id+\deformation^K_k}{2}$ and $y^K_{t^K_{k+\frac12}}=\tfrac{(\Id+\deformation^K_{k+1})\circ \deformation^K_k}{2}$ with the corresponding slopes $\tfrac{\deformation^K_k-\Id}{\tau}$ and $\tfrac{(\deformation^K_{k+1}-\Id)\circ \deformation^K_k}{\tau}$, respectively \change{(cf. Figure \ref{fig:interpol} (left) for a sketch).}

We define the \emph{image extension operator} $\mathcal{U}^K[\mathbf{u}^K,\mathbf{\Phi}^K] \in L^2([0,1],\imageSpace)$ as
$\mathcal{U}^K[\mathbf{u}^K,\mathbf{\Phi}^K](t,x)=\image^K(t,x)$ where
\begin{align*}
u^K_t\circ y^K_t\!\change{\coloneqq}&u^K_0 + \frac{t}{\tau}(u^K_1 \circ \phi^K_1 -u^K_0),~ t \in [0,t^K_{\frac12}\change{]}\\
u^K_t\circ y^K_t\!\change{\coloneqq}&u^K_{K-1} \!+\! \tfrac{t-t^K_{K-1}}{\tau} (u^K_K \circ \phi^K_K \!-\!u^K_{K-1}),~ t \in \change{(}t^K_{K\!-\!\frac12},1]
\end{align*}
and, for $k=1,\dots,K-1$ and $t \in \change{(}t^K_{k-\frac12},t^K_{k+\frac12}\change{]}$
\begin{align}
&u^K_t \circ y^K_t \nonumber\\
\change{\coloneqq}&\tfrac{u^K_{k-1}+u^K_k\circ \phi^K_k}{2} + \tfrac{t-t^K_{k-\frac12}}{\tau} (u^K_k\circ \phi^K_k-u^K_{k-1})
+\tfrac{(t-t^K_{k-\frac12})^2}{2\tau^2} \left(u^K_{k+1}\circ \phi^K_{k+1} \circ \phi^K_k - 2 u^K_k \circ \phi^K_k +u^K_{k-1} \right).\label{eq:image_extension}
\end{align}
%\begin{equation*}
%u^K(t,\cdot)=
%\begin{cases}
%\!&\!u^K_0 + \frac{t}{\tau}(u^K_1 \circ \phi^K_1 -u^K_0), \quad \quad  t \in [0,t^K_{\frac12}]\\
%\!&\!\tfrac{u^K_{k-1}+u^K_k\circ \phi^K_k}{2} + \tfrac{t-t^K_{k-\frac12}}{\tau} (u^K_k\circ \phi^K_k-u^K_{k-1})\\
%\!&\!+\tfrac{(t-t^K_{k-\frac12})^2}{2\tau^2} \left(u^K_{k+1}\circ \phi^K_{k+1} \circ \phi^K_k - 2 u^K_k \circ \phi^K_k +u^K_{k-1} \right), t \in [t^K_{k-\frac12},t^K_{k+\frac12}]\\
%\!&\!u^K_{K-1} + \tfrac{t-t^K_{K-1}}{\tau} (u^K_K \circ \phi^K_K -u^K_{K-1}), \quad \quad t \in [t^K_{K-\frac12},1].
%\end{cases}
%\end{equation*}
This can be seen as blending between the "half-way images" $\tfrac{\image^K_{k-1}+\image^K_k \circ \deformation^K_k}{2}$ and $\tfrac{(\image^K_{k}+\image^K_{k+1} \circ \deformation^K_{k+1})\circ \deformation^K_k}{2}$ along the incremental transport path $y^K$. This is depicted on Figure~\ref{fig:interpol} \change{right}.
\begin{figure*}[htb]
\includegraphics[width=\textwidth]{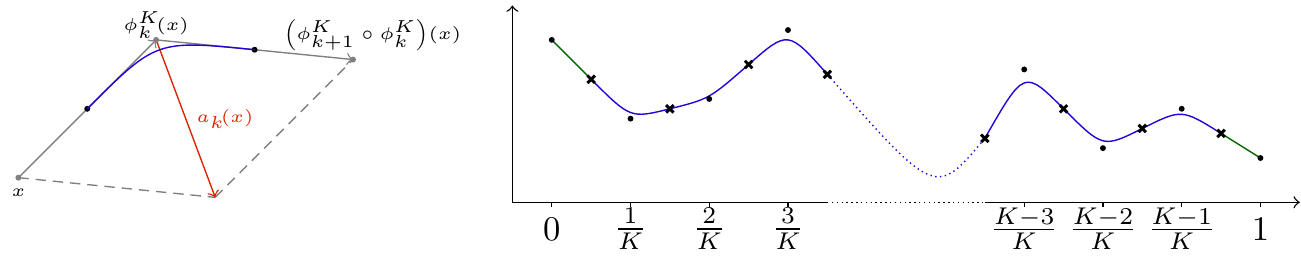}
\caption{\label{fig:interpol} Left: Schematic drawing of the Hermite interpolation 
(blue) on the time interval $\color{blue}[(k-\tfrac12)/K, (k+\tfrac12)/K]$
together with the discrete acceleration $a_k^K(x)$. Right: Image extension $\mathcal{U}^K[\imagevec^K,\mathbf{\Phi}^K](\cdot, x)$ along a path $(\psi_t^K(x))_{t \in [0,1]}$ from the left, 
plotted against time. 
Dots represent the values $u_k^K$, and crosses the "half-way" values 
$\tfrac12 (u_k^K+u_{k-1}^K)$ along the discrete transport path.}
\end{figure*}
The discrete velocity field corresponding to the discrete incremental transport path $y^K_t$ is given by
\begin{equation*}
v^K_0(t,\cdot)\change{\coloneqq}\frac{1}{\tau}(\phi^K_1-\Id), ~~ v^K_K(t,\cdot)\change{\coloneqq}\frac{1}{\tau}(\phi^K_K-\Id),
\end{equation*}
for $t \in [0,t^K_{\frac12}\change{]}$ and $t \in \change{(}t^K_{K-\frac12},1]$, respectively, and 
\begin{align}
%&v^K_0(t,\cdot)=\frac{1}{\tau}(\phi^K_1-\Id), ~~ v^K_K(t,\cdot)=\frac{1}{\tau}(\phi^K_K-\Id)\\
v^K_k(t,\cdot)\change{\coloneqq}\frac{1}{\tau}(\phi^K_k-\Id)\!+\!\tfrac{t-t^K_{k-\frac12}}{\tau^2}\left(\phi^K_{k+1} \circ \phi^K_k -2\phi^K_k +\Id\right), \label{eq:v^K_definition}
\end{align}
for $t \in \change{(}t^K_{k-\frac12},t^K_{k+\frac12}\change{]}$ with $k=1,\dots,K-1$.
%\begin{equation*}
%v^K_k(t,\cdot)=
%\begin{cases}
%\frac{1}{\tau}(\phi^K_1-\Id), & k=0\\
%\frac{1}{\tau}(\phi^K_k-\Id)+\tfrac{t-t^K_{k-\frac12}}{\tau^2}\left(\phi^K_{k+1} \circ \phi^K_k -2\phi^K_k +\Id\right),&k=1,\dots,K-1 \\
%\frac{1}{\tau}(\phi^K_K-\Id), & k=K,
%\end{cases}
%\end{equation*}
The corresponding discrete acceleration is $a_0^K(t,\cdot)=a_K^K(t,\cdot)\change{\coloneqq}0$ for $t \in [0,t^K_{\frac12}\change{]}$ and $t \in \change{(}t^K_{K-\frac12},1]$, respectively, and
%\begin{equation*}
%a^K_k(t,\cdot)\!=\!
%\begin{cases}
%0,\!&\! k\!=\!0,K,\\
%\tfrac{1}{\tau^2}\left(\phi^K_{k+1} \circ \phi^K_k \!-\!2\phi^K_k \!+\!\Id\right),\!&\! k\!=\!1,\dots,K-1.
%\end{cases}
%\end{equation*}
\begin{equation*}
a^K_k(t,\cdot)\change{\coloneqq}
\frac{1}{\tau^2}\left(\phi^K_{k+1} \circ \phi^K_k \!-\!2\phi^K_k \!+\!\Id\right),
%\!&\! k\!=\!1,\dots,K-1.
\end{equation*}
for $t \in \change{(}t^K_{k-\frac12},t^K_{k+\frac12}\change{]}$ with $k=1,\dots,K-1$.
The spatial inverse of the discrete incremental transport path is denoted by $x^K_k(t,\cdot), ~k=0,\dots,K$ which, 
following \cite[Chapter 5]{Ci88} exists if $\max_{k=1,\dots,K} \|D\phi^K_k- \Id \|_{C^0(\domain)} <c$ for a small enough constant $c>0$ (see Section \ref{sec:convergence}\change{, \eqref{eq:deformation_close_to_identity}, \eqref{eq:acceleration_close_to_zero}}). We define the velocity and the acceleration along the incremental transport path by
\begin{align}\label{eq:tilde_quantities}
&\tilde{v}^K_t\change{\coloneqq}v^K_t\circ x^K_t,~ 
\tilde{a}^K_t\change{\coloneqq}a^K_t\circ x^K_t.
\end{align}
Now, the actual discrete flow given as the map $(t,x) \mapsto \psi_t^K(x)$ is defined recursively by
\begin{align}
\psi^K_t\coloneqq& y^K_0(t,\cdot), ~ t \in [0,t^K_{\frac12}\change{]},\nonumber\\
 \psi^K_t\coloneqq&y^K_k(t,\psi^K_{t^K_{k-\frac12}}(\cdot)), ~ t \in \change{(}t^K_{k-\frac12},t^K_{k+\frac12}]\change{,~ k\!=\!1,\!\dots,\!K\!-\!1}\nonumber\\
 \change{\psi^K_t} \change{\coloneqq}& \change{y^K_K(t, \psi^K_{t^K_{K-\frac12}}(\cdot)), ~ t \in (t^K_{K-\frac12},1]}.
 \label{eq:discrete_flow_tilde}
\end{align}
One can directly show that \eqref{eq:discrete_flow_tilde} is well-defined in the sense of equations $\eqref{eq:first_order_flow}- \eqref{eq:second_order_flow}$, i.e.
\begin{align*}
&\dot{\psi}^K_t=\tilde{v}^K_t\circ \psi^K_t, ~ \psi^K_0(x)=x\\
&\ddot{\psi}^K_t=\tilde{a}^K_t\circ\psi^K_t.
\end{align*}
Based on this, the first order scalar weak material derivative of $u^K$ can be defined as the absolute value of the material derivative along the paths $t \mapsto \psi_t^K(x)$ with
%\begin{align*}
%z^K(t,x)=&\tfrac{1}{\tau}|u^K_1 \circ \phi^K_1 -u^K_0|(x^K_0(t,x)), \quad  t \in [0,t^K_{\frac12}]\\
%z^K(t,x)=
%\end{align*}
\begin{align*}
z^K_t\circ y^K_t&\change{\coloneqq}\tfrac{1}{\tau}|u^K_1 \circ \phi^K_1 -u^K_0|,~  t \in [0,t^K_{\frac12}\change{]}\\
z^K_t\circ y^K_t&\change{\coloneqq}\tfrac{1}{\tau} |u^K_K \circ \phi^K_K -u^K_{K-1}|,~ t \in \change{(}t^K_{K-\frac12},1],
\end{align*}
%\begin{equation*}
%z^K_t\circ y^K_t=
%\begin{cases}
%\tfrac{1}{\tau}|u^K_1 \circ \phi^K_1 -u^K_0|,~  t \in [0,t^K_{\frac12})\\
% \tfrac{1}{\tau} |u^K_K \circ \phi^K_K -u^K_{K-1}|,~ t \in [t^K_{K-\frac12},1],
%\end{cases}
%\end{equation*}
and 
\begin{align}
&z^K_t \circ y^K_t 
\change{\coloneqq}\Big|\frac{1}{\tau} (u^K_k\circ \phi^K_k-u^K_{k-1}) +
 \tfrac{t-t^K_{k-\frac12}}{\tau^2} \left(u^K_{k+1}\circ \phi^K_{k+1} \circ \phi^K_k - 2 u^K_k \circ \phi^K_k +u^K_{k-1} \right)\Big|, \label{eq:extension_z_definition}
\end{align}
for $t \in \change{(}t^K_{k-\frac12},t^K_{k+\frac12}\change{]},~k=1,\dots,K-1$.
%\begin{equation*}
%z^K(t,x)=
%\begin{cases}
%\frac{1}{\tau}|u^K_1 \circ \phi^K_1 -u^K_0|(x^K_0(t,x)), \quad  t \in [0,t^K_{\frac12}]\\
%\big|\frac{1}{\tau} (u^K_k\circ \phi^K_k-u^K_{k-1}) +\\
% \tfrac{t-t^K_{k-\frac12}}{\tau^2} \left(u^K_{k+1}\circ \phi^K_{k+1} \circ \phi^K_k - 2 u^K_k \circ \phi^K_k +u^K_{k-1} \right)\big|(x^K_k(t,x)),~ t \in [t^K_{k-\frac12},t^K_{k+\frac12}]\\\frac{1}{\tau} |u^K_K \circ \phi^K_K -u^K_{K-1}|(x^K_K(t,x)), \quad \quad t \in [t^K_{K-\frac12},1],
%\end{cases}
%\end{equation*}
For the second order scalar weak material derivative we have that for $t \in \change{(}t^K_{k-\frac12},t^K_{k+\frac12}\change{]},~k=1,\dots,K-1$
\begin{align*}
w^K_t \circ y^K_t
\change{\coloneqq}\frac{1}{\tau^2} \left|u^K_{k+1}\circ \phi^K_{k+1} \circ \phi^K_k - 2 u^K_k \circ \phi^K_k +u^K_{k-1} \right|,
\end{align*}
and $w^K_t\change{\coloneqq}0$ elsewhere, 
%\begin{equation*}
%w^K(t,x)=
%\begin{cases}
%\frac{1}{\tau^2} \left|u^K_{k+1}\circ \phi^K_{k+1} \circ \phi^K_k - 2 u^K_k \circ \phi^K_k +u^K_{k-1} \right|(x^K_k(t,x)),& t \in [t^K_{k-\frac12},t^K_{k+\frac12})\\
%0,& \textrm{else}.
%\end{cases}
%\end{equation*}
which is the absolute value of the second time derivative of $\mathcal{U}^\change{K}[\imagevec^K,\defvec^K]$ along the path $t \mapsto \psi_t^K(x)$. Indeed, one easily verifies (cf.~\cite[Proposition 9]{EfNe19}) that $z^K_t$ and $w^K_t$ are admissible in the sense of  equations \eqref{eq:first_derivative} and \eqref{eq:second_derivative}, i.e.
\begin{align*}
\left|u^K_t \circ \psi^K_t-u^K_s\circ\psi^K_s\right|
\leq& \int_s^t z^K_r\circ \psi^K_r \d r,\\
\Big|u^K_{t+\tau}\circ \psi^K_{t+\tau}-u^K_t\circ \psi^K_t
-u^K_{s+\tau}\circ \psi^K_{s+\tau}+u^K_s\circ \psi^K_s\Big| 
\leq& \int_0^\tau \int_s^t w^K_{r+l}\circ \psi^K_{r+l} \d l \d r.
\end{align*}
\begin{remark}
For periodic boundary conditions we \change{make} the cubic interpolation definition on $t \in \change{(}t^K_{k-\frac12},t^K_{k+\frac12}\change{]}$ for $k=1,\dots,K$, with the convention $K \overset{\scriptscriptstyle\wedge}{=} 0, K+1 \overset{\scriptscriptstyle\wedge}{=} 1$ and $\change{(}t^K_{K-\frac12},t^K_{K+\frac12}\change{]}= \change{(}t^K_{K-\frac12},1] \cup [0,t^K_{\frac12}\change{]}$.
%This corresponds to observing the time variable on $\mathbb{S}^1$ instead of $[0,1]$.
\end{remark}
Finally, we define the extension of the energy $\SplinePathenergy^{\sigma,K}$ to a functional $\splinepathenergy^{\sigma,K}$ by
\begin{align*}
&\splinepathenergy^{\sigma,K}[\image]
\change{\coloneqq}\change{\inf\limits_{\imagevec^K \in \imageSpace^{K+1}}}\inf\limits_{\defvec^K \in \admset^K} \{\SplinePathenergy^{\sigma,K,D}[\imagevec^K,\defvec^K]: \mathcal{U}^\change{K}[\imagevec^K,\defvec^K]\!=\!\image\}
\end{align*}
if there exists such $\imagevec^K,\defvec^K$ and $+\infty$ else.
The existence of the infimum follows from the continuity of the constraint $\mathcal{U}^\change{K}[\imagevec^K,\defvec^K]=\image$ w.r.t. \change{the weak convergence of $u^K$ and the strong convergence} of $\defvec^K$(cf.~\cite[Lemma 11]{EfNe19}).
%%%%%%%%%%%%%%%%%%%%%%%%%%%%%%%%%%%%%%%%%%%%%%%%%%%%%%%%%%%%%%%

\section{Convergence of Time Discrete Splines}\label{sec:convergence}
In this section we study the convergence of extension of discrete regularized spline energy $\mathcal{F}^{\sigma,K}$ to continuous counterpart $\mathcal{F}^\sigma$
and the convergence of time discrete minimizers to a time continuous minimizer in the sense of Mosco.

We prove the Mosco convergence of energy $\splinepathenergy^{\sigma,K}$ to $\splinepathenergy^\sigma$ in $L^2((0,1)\change{, \imageSpace})$.
\begin{theorem}[Mosco-convergence of the discrete spline energies]\label{thm:mosco_convergence}
Let $\sigma>0$. Then the discrete spline energies $\{\splinepathenergy^{\sigma,K}\}_{K \in \N}$ converge to $\splinepathenergy^{\sigma}$ in the sense of Mosco in the topology $L^2((0,1)\change{, \imageSpace})$ for $K \to \infty$. In explicit
\begin{itemize}
\item[(i)] for every sequence $\{\image^K \in L^2((0,1)\change{, \imageSpace})\}_{K \in \N}$ which converges weakly to $\image \in L^2((0,1)\change{, \imageSpace})$ as $K \to \infty$ it holds $\liminf_{K \to \infty} \splinepathenergy^{\sigma,K}[\image^K] \geq \splinepathenergy^{\sigma}[\image]$,
\item[(ii)] for every $\image \in L^2((0,1)\change{, \imageSpace})$ there exists a sequence $\{\image^K\}_{K \in \N}$ such that $u^K \to u$ in $L^2((0,1)\change{, \imageSpace})$ as $K\!\to \!\infty$
and $\limsup_{K \to \infty} \splinepathenergy^{\sigma,K}[\image^K] \leq \splinepathenergy^{\sigma}[\image]$.
\end{itemize}
\end{theorem}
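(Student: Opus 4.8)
The plan is to establish the two halves of Mosco convergence — the $\liminf$-inequality (i) along weakly convergent sequences and the strong recovery sequence (ii) — separately, following the template of the $\Gamma$-convergence arguments for time-discrete metamorphosis in \cite{EfNe19,EfKoPo19a}.

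For (i) I would assume $M:=\liminf_K\splinepathenergy^{\sigma,K}[u^K]<\infty$, pass to a subsequence realizing the $\liminf$ with $\splinepathenergy^{\sigma,K}[u^K]\le M+o(1)$, and choose near-optimal discrete data $\imagevec^K\in\imageSpace^{K+1}$, $\defvec^K\in\admset^K$ with $\mathcal U^K[\imagevec^K,\defvec^K]=u^K$ and $\SplinePathenergy^{\sigma,K,D}[\imagevec^K,\defvec^K]\le \splinepathenergy^{\sigma,K}[u^K]+o(1)$. Since the $\sigma$-weighted path-energy part is bounded, Lemma~\ref{lemma:sobolev_upper_bound} gives $\max_k\|\deformation_k^K-\Id\|_{H^m(\domain)}=\mathcal O(K^{-1/2})$ and hence $\max_k\|\deformation_k^K-\Id\|_{C^{1,\alpha}(\overline\domain)}\to0$, while $\sum_k|\deformation_k^K-\Id|_{H^m}^2\to0$ keeps the cumulative compositions bounded; thus the incremental inverses $x^K$ and the discrete flow $\psi^K$ of Section~\ref{sec:time_extension} are well defined for large $K$ and $y_t^K\to\Id$ in $H^m(\domain)$ uniformly in $t$. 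Using the elementary identities $\energyDensity_D(D\deformation_k^K)=K^{-2}|\varepsilon[v_k^K]|^2$, $|D^m\deformation_k^K|^2=K^{-2}|D^m v_k^K|^2$ (and their analogues with $a_k^K$), the change of variables $x\mapsto y_t^K(x)$ together with a chain-rule expansion controlled by $\|y_t^K-\Id\|_{H^m}\to0$, and Cauchy--Schwarz to control the piecewise-affine time-interpolation correction by the bounded sums $K^{-1}\sum_k\!\int\! L[v_k^K,v_k^K]$, $K^{-1}\sum_k\!\int\! L[a_k^K,a_k^K]$, one rewrites
\[
\SplinePathenergy^{\sigma,K,D}[\imagevec^K,\defvec^K]=\int_0^1\!\!\int_\domain L[\tilde a_t^K,\tilde a_t^K]+\tfrac1\delta (w_t^K)^2+\sigma\bigl(L[\tilde v_t^K,\tilde v_t^K]+\tfrac1\delta (z_t^K)^2\bigr)\,\d x\,\d t+o(1),
\]
with $\tilde v^K,\tilde a^K$ as in \eqref{eq:tilde_quantities} and $z^K,w^K$ the extensions of Section~\ref{sec:time_extension}. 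By the $H^m$-coercivity of $L$ a further subsequence gives $\tilde v^K\rightharpoonup v$, $\tilde a^K\rightharpoonup a$ in $L^2((0,1),\motionSpace)$ and $z^K\rightharpoonup z$, $w^K\rightharpoonup w$ in $L^2((0,1)\times\domain)$.

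The heart of the argument is to show $(v,a,z,w)\in\mathcal C[u]$. The a priori estimates for $\psi^K$ (increment-wise closeness to $\Id$, decay of the scaled second differences) make $\psi^K$ bounded in $H^2((0,1),H^m(\domain,\domain))$ with $\dot\psi^K=\tilde v^K\circ\psi^K$, $\ddot\psi^K=\tilde a^K\circ\psi^K$, so by an Aubin--Lions argument one may extract $\psi^K\to\psi$ in $C^1([0,1],C^{1,\alpha}(\overline\domain,\R^d))$ and $\psi^K\rightharpoonup\psi$ in $H^2((0,1),H^m)$. A weak--strong argument (change of variables with $\psi^K\to\psi$; cf.~\cite[Lemma~11]{EfNe19}) then passes to the limit in $\dot\psi^K=\tilde v^K\circ\psi^K$ and $\ddot\psi^K=\tilde a^K\circ\psi^K$, yielding \eqref{eq:first_order_flow}--\eqref{eq:second_order_flow}. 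For the two variational inequalities, recall from Section~\ref{sec:time_extension} that $U_t^K:=u_t^K\circ\psi_t^K$ satisfies the discrete analogues of \eqref{eq:first_derivative}--\eqref{eq:second_derivative}; the bound $\|U_t^K-U_s^K\|_{\imageSpace}\le\int_s^t\|z_r^K\circ\psi_r^K\|_{\imageSpace}\,\d r\le C|t-s|^{1/2}$ makes $\{U^K\}$ equicontinuous, and combined with $u^K\rightharpoonup u$, $\psi^K\to\psi$ and $u\in C^1([0,1],\imageSpace)$ this forces $U_t^K\rightharpoonup u_t\circ\psi_t$ in $\imageSpace$ for every fixed $t$. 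Testing the discrete inequalities against nonnegative $\theta\in L^\infty(\domain)$, using weak lower semicontinuity of $\xi\mapsto\int_\domain|\xi|\theta$ on the left and $z^K\circ\psi^K\rightharpoonup z\circ\psi$, $w^K\circ\psi^K\rightharpoonup w\circ\psi$ on the right, and then varying $\theta$, gives \eqref{eq:first_derivative}--\eqref{eq:second_derivative}. With $(v,a,z,w)\in\mathcal C[u]$ in hand, weak lower semicontinuity of the convex integrands yields
\[
\splinepathenergy^\sigma[u]\le\int_0^1\!\!\int_\domain L[a,a]+\tfrac1\delta w^2+\sigma\bigl(L[v,v]+\tfrac1\delta z^2\bigr)\,\d x\,\d t\le\liminf_{K\to\infty}\SplinePathenergy^{\sigma,K,D}[\imagevec^K,\defvec^K]\le M,
\]
which is (i). I expect this passage to the limit in the relaxed Lagrangian inequalities — in particular the pointwise-in-time identification of the weak limit of $U^K$ — to be the main obstacle.

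For the recovery sequence (ii), if $\splinepathenergy^\sigma[u]=\infty$ I would take $u^K:=\mathcal U^K[\imagevec^K,\Id^K]$ with $\imagevec^K$ the grid sampling of a time-mollification of $u$, so that $u^K\to u$ in $L^2((0,1),\imageSpace)$ and the bound is vacuous. Otherwise, fix $\eta>0$ and an $\eta$-almost minimizer $(v,a,z,w)\in\mathcal C[u]$ with flow $\psi$; then $\psi\in H^2((0,1),H^m(\domain,\domain))$ and, by the reasoning in the proof of Proposition~\ref{prop:equivalence_of_approaches}, $t\mapsto u_t\circ\psi_t\in W^{2,2}((0,1),\imageSpace)$. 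I would set $\deformation_k^K:=\psi_{t_k^K}\circ(\psi_{t_{k-1}^K})^{-1}$, $u_k^K:=u_{t_k^K}$ and $u^K:=\mathcal U^K[\imagevec^K,\defvec^K]$; since $\psi\in C^0([0,1],C^1(\overline\domain))$ and $\psi_t=\Id$ on $\partial\domain$, one has $\|\deformation_k^K-\Id\|_{C^1}=\mathcal O(K^{-1})$ uniformly in $k$, so $\deformation_k^K\in\admset$ for $K$ large. Composition with $\psi_{t_{k-1}^K}$ turns $v_k^K$, $a_k^K$, $\hat z_k^K$, $\hat w_k^K$ into first and second finite differences of $\psi$, resp. of $u\circ\psi$, which converge strongly in $L^2$ to $\dot\psi$, $\ddot\psi$, $(u\circ\psi)'$, $(u\circ\psi)''$ by the just-recorded $H^1$/$H^2$-in-time regularity; undoing the composition (continuity of $f\mapsto f\circ\psi_t^{-1}$ on $H^m$) yields strong convergence of the piecewise-constant time-interpolations of $v_k^K$, $a_k^K$ to $v$, $a$ in $L^2((0,1),\motionSpace)$ and of $|\hat z_k^K|$, $|\hat w_k^K|$ to $z$, $w$ in $L^2((0,1)\times\domain)$. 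Hence every block of $\SplinePathenergy^{\sigma,K,D}[\imagevec^K,\defvec^K]$ converges to its continuous counterpart, so
\[
\limsup_{K\to\infty}\splinepathenergy^{\sigma,K}[u^K]\le\lim_{K\to\infty}\SplinePathenergy^{\sigma,K,D}[\imagevec^K,\defvec^K]=\int_0^1\!\!\int_\domain L[a,a]+\tfrac1\delta w^2+\sigma\bigl(L[v,v]+\tfrac1\delta z^2\bigr)\,\d x\,\d t\le\splinepathenergy^\sigma[u]+\eta,
\]
while a standard interpolation estimate (using $u\in C^1$ and $\psi^K\to\psi$) gives $u^K\to u$ in $L^2((0,1),\imageSpace)$. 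A diagonal argument over $\eta\to0$ then completes (ii).
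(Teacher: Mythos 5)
Your proposal follows essentially the same route as the paper's proof: for the $\liminf$ you extract weak limits of the extended velocity, acceleration and material-derivative quantities, verify $(v,a,z,w)\in\mathcal C[u]$ via the flow equations and an equicontinuity/weak-closedness argument for the relaxed Lagrangian inequalities, and conclude by weak lower semicontinuity, while for the $\limsup$ you build the recovery sequence by sampling the optimal continuous flow at the grid points exactly as in \eqref{eq:repre_limsup}. The only slips are local and non-fatal: $\Vert\deformation_k^K-\Id\Vert_{C^1(\overline\domain)}=\bigO(K^{-1/2})$ rather than $\bigO(K^{-1})$, since $v$ is only $L^2$ in time (cf.~\eqref{eq:lim_sup_phi_close_to_id}), and in the recovery step the paper needs only the one-sided Jensen-type upper bound rather than the strong $L^2((0,1),\motionSpace)$ convergence of the discretized accelerations that you assert, which would require an additional continuity-of-composition argument in $H^m$.
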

\begin{remark}
The above result holds for any choice of $\energyDensity_D$ satisfying assumption $(W1)\!-\!(W3)$ from \cite{EfKoPo19a}, though we restrict ourselves to the special case $\energyDensity_D\!=\!|A^{sym}-\Id|^2$.
\end{remark}
\begin{remark}
In the case of periodic boundary conditions, the same applies in the topology $L^2(\mathbb{S}^1\change{, \imageSpace})$. The proof requires minor alterations implied by already indicated changes in energy and interpolation.
\end{remark}
\subsubsection*{Proof of the $\liminf$ estimate.}
Suppose we have a sequence $\{\image^K \in L^2((0,1),\imageSpace)\}_{K \in \N}$ such that $\image^K \rightharpoonup \image$ in that space as $K \to \infty$, and we suppose that $\splinepathenergy^{\sigma,K}[\image^K] < \overline{\splinepathenergy} < \infty$.
By definition for every $K$ large enough an image vector $\imagevec^K \in \imageSpace^{K+1}$ and a corresponding optimal vector of deformations $\defvec^K \in \admset^K$ exist, such that
\begin{equation*}
\image^K=\mathcal{U}^\change{K}[\imagevec^K,\defvec^K], ~ \splinepathenergy^{\sigma,K}[\image^K]=\SplinePathenergy^{\sigma,K,D}[\imagevec^K,\defvec^K].
\end{equation*}
For the image vectors $\imagevec^K$ and the deformation vectors $\defvec^K$ we define the discrete velocity and acceleration quantities as in Section \ref{section:time_discrete} and their extensions as in 
Section \ref{sec:time_extension}.
Using Lemma~\ref{lemma:sobolev_upper_bound} we have
\begin{align}\label{eq:deformation_close_to_identity}
\!&\!\max_{k=1,\dots,K}\!\|\deformation^K_k - \Id\|_{C^{1,\alpha}(\overline{\domain})} \!\leq\! CK^{-\frac12},\\
&\max_{k=1,\dots,K-1}\!\|a^K_{k}\|_{C^{1,\alpha}(\overline{\domain})} \leq CK^{\frac12},\label{eq:acceleration_close_to_zero}
\end{align}
which implies that $y^K_k$ is invertible for any $k=0,\dots,K$ and any $K$ large enough. Thus, we are able to define all the temporal extended quantities introduced in the previous section. 
Furthermore, \change{for every $t \in [0,1]$ we have that}  $y^K_k(t,\cdot)$ is in $C^{1,\alpha}(\overline{\domain},\overline{\domain})$ \change{and it is bounded uniformly in $t$,}
and by \cite[Theorem 2.1]{BoHaSt05} the same holds for $x^K_k(t,\cdot)$ with an estimate
\begin{equation}
\|x^K_k(t,\cdot)\|_{C^{1,\alpha}(\overline{\domain})} \!\leq\! C(1\!+\!\max_{k=1,\dots,K}\! \|\deformation^K_k - \Id\|_{C^{1,\alpha}(\overline{\domain})}). \label{eq:bound_on_x}
\end{equation}
In what follows, the quantities $\hat z^K_k$ and $\hat w^K_k$ are time discrete quantities from Section \ref{section:time_discrete}, while $z^K_t$ and $w^K_t$ are their time extensions from Section \ref{sec:time_extension}.
The estimate \eqref{eq:bound_on_x} together with locally Lipschitz property of the determinant function gives
\begin{align*}
\lim_{K \to \infty} \int_0^1 \int_{\domain} (w^K_t)^2 \d x \d t
=&\lim_{K \to \infty}\sum_{k=1}^{K-1}\int_\domain\int_{t^K_{k-\frac12}}^{t^K_{k+\frac12}} \change{|}\hat w^K_k(x^K_k(t,x)\change{|}^2 \d t\d x\\
=&\lim_{K \to \infty}\sum_{k=1}^{K-1}\int_\domain\int_{t^K_{k-\frac12}}^{t^K_{k+\frac12}} \change{|}\hat w^K_k(x)\change{|}^2 \det(Dy^K_k(t,x)) \d t\d x\\
=&\lim_{K \to \infty} \frac{1}{K} \sum_{k=1}^{K-1} \int_{\domain} \change{|}\hat w^K_k\change{|}^2 \d x.
\end{align*}
% Here we used the Riemannian approximation of the integral.
\change{Using the same ideas, together with $|z^K_t - |\hat{z}^K_k \circ x^K_{k,t}||\leq \frac{1}{K} |\hat{w}^K_k \circ x^K_{k,t}|$ (cf. \eqref{eq:extension_z_definition}) on every subinterval we have}
% with approximation of the time integrals first in the initial and then in the final points of intervals $[t^K_{k-\frac12},t^K_{k+\frac12}]$ we have
\begin{equation*}
\lim_{K \to \infty} \!\int_0^1\! \!\int_{\domain}\! (z^K_t)^2 \d x \d t\!=\!\lim_{K \to \infty} \frac{1}{K} \sum_{k=1}^K \!\int_\domain\! \change{|}\hat z^K_k\change{|}^2 \d x.
\end{equation*}
This implies the uniform boundedness of $\{z^K\}_{K \in \N}$ and $\{w^K\}_{K \in \N}$ in $L^2((0,1) \times \domain)$ and by reflexivity, the existence of weakly convergent subsequences (with the same labeling) to $z$ and $w$, respectively. Then by weak lower semicontinuity we have
\begin{align}
&\|z\|^2_{L^2((0,1)\times \domain)} \leq \liminf_{K \to \infty} \|z^K\|^2_{L^2((0,1)\times \domain)},~~~ 
\|w\|^2_{L^2((0,1)\times \domain)} \leq \liminf_{K \to \infty} \|w^K\|^2_{L^2((0,1)\times \domain)}. \label{eq:wlsc_z_w}
\end{align}
Using Korn's and Poincare's inequality we get
\begin{align*}
&\int_0^1 \int_\domain \change{|}a^K_\change{t}\change{|}^2 \d x \d t = \sum_{k=1}^{K-1} \int_{t^K_{k-\frac12}}^{t^K_{k+\frac12}}\int_\domain \change{|}a^K_k(t, x)\change{|}^2 \d x \d t  
\leq \frac{C}{K} \sum_{k=1}^{K-1} \int_\domain {W_A(Da^K_k)} \d x \leq C \overline{\splinepathenergy},\\
&\int_0^1 \int_\domain |D^{m} a^K_\change{t}|^2 \d x \d t
\!=\!\sum_{k=1}^{K-1}\! \int_{t^K_{k-\frac12}}^{t^K_{k+\frac12}}\!\int_\domain\! |D^{m}a^K_k |^2 \d x \d t
\leq \sum_{k=1}^{K-1} \frac{1}{K} \int_\domain |D^{m} a^K_k|^2 \d x \leq C \overline{\splinepathenergy}.
\end{align*}
The analogous estimates hold for $v^K$ and $D^m v^K$, \change{where we additionally use $\left|v^K_t-|v^K_k|\right| \leq \frac{1}{K} |{a}^K_k|$ on every subinterval (cf. \eqref{eq:v^K_definition}).}
%by using approximation of Riemannian integral at the initial and final points of intervals of size $K^{-1}$.
Hence, we have that $\{v^K\}_{K \in \N}$ and $\{a^K\}_{K \in \N}$ are uniformly bounded in $L^2((0,1),\motionSpace)$ and they have corresponding weak limits $v$ and $a$ in that space.

Furthermore, we compute the Taylor expansion of
$\energyDensity_A((t-t^K_{k-\frac12})^2Da^K_k)$ at $t^K_{k-\frac12}$, evaluated at $t=t^K_{k+\frac12}$ to get
\begin{align}
\frac{1}{K^4}\energyDensity_A(Da^K_k)&=\frac{1}{2K^4} D^2\energyDensity_A(\mathbf{0})(Da^K_k,Da^K_k) +r_{a,k}^K 
=\frac{1}{K^4}\tr(\varepsilon[a^K_k]^2)+r_{a,k}^K. \label{eq:taylor_exp1}
\end{align}
For the remainder term we have $r_{a,k}^K=\bigO(K^{-6}|Da^K_k|^3)$ and
by using Lemma \ref{lemma:sobolev_upper_bound} and \eqref{eq:acceleration_close_to_zero}
\begin{align}
&\sum_{k=1}^{K-1} K^3 \int_{\domain} r^K_{a,k} \d x 
\leq \frac{C}{K^{2}} \!\max_{k=1,\dots,K\!-\!1}\!\|a^K_k\|_{C^1(\overline{\domain})}\!\frac{1}{K}\!\sum_{k=1}^{K\!-\!1} \|a^K_k\|^2_{H^{m}(\domain)}
\!\leq\! CK^{-\frac32}\overline{\splinepathenergy}.\label{eq:taylor_exp_r}
\end{align}
Then we use weak lower semicontinuity of the energy  to write
\begin{align}
\liminf_{K \to \infty}\frac{1}{K} \sum_{k=1}^{K-1} \int_\domain\energyDensity_A(Da^K_k) + \gamma |D^{m} a^K_k|^2 \d x 
=&\liminf_{K \to \infty} \frac{1}{K} \sum_{k=1}^{K-1} \int_\domain \tr(\varepsilon[a^K_k]^2) + \gamma |D^{m} a^K_k|^2 \d x \nonumber\\
=&\liminf_{K \to \infty} \int_0^1\int_\domain \tr(\varepsilon[a^K_\change{t}]^2) + \gamma |D^{m} a^K_\change{t}|^2 \d x \d t \nonumber\\
\geq & \int_0^1\int_\domain  \tr(\varepsilon[a]^2) + \gamma |D^{m} a|^2 \d x \d t. \label{eq:wlsc_a}
\end{align}
Analogous Taylor expansion arguments give
\begin{align}
\liminf_{K \to \infty}K \sum_{k=1}^{K} \int_\domain\energyDensity_D(D\deformation^K_k) + \gamma |D^m \deformation^K_k|^2 \d x 
=&\liminf_{K \to \infty} \int_0^1\int_\domain \tr(\varepsilon[v^K_\change{t}]^2) + \gamma |D^m v^K_\change{t}|^2 \d x \d t \nonumber\\
\geq & \int_0^1\int_\domain \tr(\varepsilon[v]^2) + \gamma |D^m v|^2 \d x \d t. \label{eq:wlsc_v}
\end{align}
\change{Altogether, \eqref{eq:wlsc_z_w}, \eqref{eq:wlsc_a} and \eqref{eq:wlsc_v} give 
\begin{equation*}
\liminf_{K \to \infty}\splinepathenergy^{\sigma,K}[\image^K]=\liminf_{K \to \infty}\SplinePathenergy^{\sigma,K,D}[\imagevec^K, \defvec^K] \geq \splinepathenergy^{\sigma}[\image].
\end{equation*}}

\change{We are left to show that the limit objects $v,a,z,w$ are indeed corresponding quantities for the image curve $\image$.}
\change{First,} let us observe that by using \eqref{eq:bound_on_x} (cf. also \cite[Proposition 1.2.4,1.2.7]{Fi17}) we have
\begin{align}
&\|\tilde{v}^K_t\|_{C^{1,\alpha}(\overline{\domain})} 
 \leq C \|v^K_t\|_{C^{1,\alpha}(\overline{\domain})}(1+\max_{k=1,\dots,K} \|\deformation^K_k - \Id\|_{C^{1,\alpha}(\overline{\domain})})\change{,} \label{eq:estimate_on_composition_vel}\\
&\|\tilde{a}^K_t\|_{C^{1,\alpha}(\overline{\domain})}
\leq C \|a^K_t\|_{C^{1,\alpha}(\overline{\domain})}(1+\max_{k=1,\dots,K} \|\deformation^K_k - \Id\|_{C^{1,\alpha}(\overline{\domain})}), \label{eq:estimate_on_composition_acc}
\end{align}
where $\tilde{v}^K_t$ and $\tilde{a}^K_t$ are introduced in \eqref{eq:tilde_quantities}.
\change{This implies that} we have the uniform boundedness of $\{\tilde{v}^K\}_\change{{K \in \N}}$ in $L^2((0,1),\motionSpace)$ and in $L^2((0,1),C^{1,\alpha}(\overline{\domain},\R^d))$ by \eqref{eq:estimate_on_composition_vel}.
Then by \cite[Theorem~6]{EfNe19} and H\"older's inequality we can infer that $\{\psi^K\}_{K \in \N}$ is uniformly bounded in \\ $C^{0,\frac12}([0,1],C^{1,\alpha}(\overline{\domain},\overline{\domain}))$.
Furthermore, by compact embedding of H\"older spaces, we have for some $\min\{\frac12,\alpha\}>\beta >0$ that $\psi^K \to \psi$ in $C^{0,\beta}([0,1],C^{1,\beta}(\overline{\domain},\overline{\domain}))$.
To show that $\psi$ is indeed the solution corresponding to $v$, we consider \change{${\psi}^{v^K}$, the solution} corresponding to $v^K$.
By weak continuity of the solution operator mapping velocities to the flows~\cite[Theorem~6]{EfNe19} we have $\change{{\psi}^{v^K}} \to \tilde{\psi}$ in $C^0([0,1]\times \overline{\domain})$.
Furthermore\change{, by}~\cite[Remark~7]{EfNe19} we have the Lipschitz continuity of the solution operator and using the spatial Lipschitz property of $v^K_k$ together with \eqref{eq:deformation_close_to_identity}
we have $\psi^K-\change{{\psi}^{v^K}} \to 0$ in $C^0([0,1] \times \overline\domain)$ which finally confirms $\psi=\tilde{\psi}$.

To show that the equation 
$\ddot{\psi}_t=a_t \circ \psi_t$ is satisfied 
first observe that the equation $\ddot{\psi}^K_t=\tilde{a}^K_t\circ \psi^K_t$ ensures the uniform boundedness of $\{\ddot{\psi}^K\}_\change{{K\in \N}}$ in the space $L^2((0,1),C^{1,\alpha}(\overline{\domain},\overline{\domain}))$.
To this end we used the uniform boundedness of $\{\tilde{a}^K\}_\change{{K\in \N}}$ in \change{the space} \\ $L^2((0,1),C^{1,\alpha}(\overline{\domain},\overline{\domain}))$(cf.~\eqref{eq:estimate_on_composition_acc}) 
and $\{\psi^K\}_\change{{K\in \N}}$ in \\ $C^0([0,1],C^{1,\alpha}(\overline{\domain},\overline{\domain}))$.
This \change{together with the previous paragraph} implies that \change{$\{\psi^K\}_\change{{K\in \N}}$ is uniformly bounded in $H^2((0,1),C^{1,\alpha}(\overline{\domain},\overline{\domain}))$ 
% and converges weakly to $\bar{\psi} \!\in\! H^1((0,1),C^{1,\beta}(\overline{\domain},\overline{\domain}))$ 
% and strongly in $C^{0,\beta}([0,1],C^{1,\beta}(\overline{\domain},\overline{\domain}))$ for $\min\{\frac12,\alpha\}>\beta>0$. 
% Thus, $\bar{\psi}=\dot{\psi}$ since we already established that $\psi^K \to \psi$ in $C^{0,\beta}([0,1],C^{1,\beta}(\overline\domain,\overline{\domain}))$.
and converges weakly to $\psi$ in $H^2((0,1),C^{1,\beta}(\overline{\domain},\overline{\domain}))$ for $0<\beta<\frac12 \min\{\frac12,\alpha\}$.}
\change{Altogether}, we have that $\psi \in H^2((0,1),C^{1,\beta}(\overline{\domain},\overline{\domain}))$ and
%and $\ddot{\psi}=a^*_t\circ\psi_t$ where $a^*_t=\ddot{\psi}_t\circ \psi^{-1}_t$. 
\begin{equation*}
\Big\vert \int_\domain \int_0^1 \left(\tilde{a}^K_t \circ \psi^K_t(x) - \ddot{\psi}_t(x) \right) \theta_t(x) \d t \d x \Big\vert \to 0,
\end{equation*}
for all $ \theta \in C^{\infty}_c((0,1) \times \domain)$. Hence, it sufficies to verify $\tilde{a}^K \circ \psi^K \rightharpoonup a \circ \psi$ in $L^2((0,1)\times \domain)$. 
Since we already have $a^K \circ \psi^K \rightharpoonup a \circ \psi$ in $L^2((0,1)\times \domain)$, we conclude the proof by checking that $\tilde{a}^K \circ \psi^K - a^K \circ \psi^K \to 0$ in $L^2((0,1),C^0(\overline{\domain},\overline{\domain}))$.
Indeed
\begin{align*}
\|\tilde{a}^K \circ \psi^K - a^K \circ \psi^K\|^2_{L^2((0,1),C^0(\overline{\domain})}
\leq&C\int_0^1 \|\tilde{a}^K_t\circ \psi^K_t - a^K_t\circ\psi^K_t\|^2_{C^0(\overline{\domain})} \d t \\
 \leq& C \sum_{k=1}^K \int_{t^K_{k-1}}^{t^K_k} \|\tilde{a}^K_t\circ\psi^K_t - a^K_t\circ\psi^K_t \|^2_{C^0(\overline{\domain})} \d t \\
\leq &  C \sum_{k=1}^K \int_{t^K_{k-1}}^{t^K_k} \|a^K_k(t,\cdot)\|^2_{C^1(\overline{\domain})} \|y^K_k (t,\cdot) - \Id\|^2_{C^0(\overline{\domain})}\d t \\
\leq & C\|a^K\|^2_{L^2((0,1),C^1(\overline\domain))} \max_{k=1,\dots,K}  \|\deformation^K_k - \Id\|^2_{C^0(\overline{\domain})}\\
\leq & CK^{-1} \|a^K\|^2_{L^2((0,1),C^1(\overline\domain))},
\end{align*}
where we used the Lipschitz property of $a^K_t$, the transformation formula and finally \eqref{eq:deformation_close_to_identity}.

In order to show that $z$ and $w$ are indeed scalar weak material derivatives of $u$, first observe that from the weak convergence $u^K \rightharpoonup u$ and the strong convergence $\psi^K \to \psi$ 
by similar approximation arguments as for \eqref{eq:decomposition_approximation}  we obtain $u^K \circ \psi^K \rightharpoonup u \circ \psi$ in $L^2((0,1) \times \domain)$ and analogously $z^K \circ \psi^K \rightharpoonup z \circ \psi$ 
and $w^K \circ \psi^K \rightharpoonup w \circ \psi$.
Next, note that for $s,t \in [0,1]$ we obtain
\begin{align*}
\int_{\domain} \change{|}\image^K_t\circ \psi^K_t-\image^K_s\circ \psi^K_s\change{|}^2 \d x 
\leq |t-s| \left|\int_s^t \int_{\domain} (z^K_r\circ \psi^K_r)^2 \d x \d r \right| 
\leq C |t-s| \|z^K\|^2_{L^2((0,1) \times \domain)},
\end{align*}
where we used H\"older's inequality, the transformation formula and uniform boundedness of $(\psi^K)^{-1}$. Thus, $\{u^K \circ \psi^K\}_{K \in \N}$ is a subset of 
\begin{equation*}
A_{\frac12,L}:=\{u \in L^2((0,1),\imageSpace): \|u_t-u_s\|_{L^2(\domain)} \leq L|t-s|^{\frac12}\},
\end{equation*} for some finite $L>0$. Then, by the weak closedness of $A_{\frac12,L}$ we obtain $\image \circ \psi  \in A_{\frac12,L}$.
%Assume there exist $s <t \in [0,1]$ such that the set
%\begin{equation*}
%B:=\{ x \in \domain:~\vert \image_t\circ\psi_t(x)- \image_s\circ\psi_s(x) \vert > \int_s^t z_r\circ\psi_r(x) \d r \}
%\end{equation*}
%has positive Lebesgue measure.
\change{Then for every $\tilde{\domain} \subset \domain$ the} functional $\image \mapsto \int_\change{\tilde{\domain}} |\image_t(x)-\image_s(x)| \d x$ is continuous because the point evaluation in time is continuous 
and \change{since it is} convex on $ A_{\frac12,L}$ \change{this} implies weak lower semicontinuity
and we obtain
\begin{align*}
\int_\change{\tilde{\domain}} |\image_t\circ \psi_t-\image_s\circ\psi_s| \d x 
\leq& \liminf_{K \to\infty} \int_\change{\tilde{\domain}} |\image^K_t\circ \psi^K_t-\image^K_s\circ \psi^K_s| \d x \\
\leq& \liminf_{K \to \infty} \int_\change{\tilde{\domain}} \int_s^t z^K_r\circ\psi^K_r \d r \d x\\ 
=& \int_\change{\tilde{\domain}} \int_s^t z_r\circ \psi_r \d r \d x.
\end{align*}
%where in the last equality we used weak convergence and test function $\Id_{B}$. This yields and obvious contradiction proving that $z$ is a scalar weak material derivative for $\image$.
\change{Since this holds for every $\tilde{\domain} \subset \domain$ one obtains that $z$ is the first order scalar material derivative for $\image$.}
We prove that $w$ is the second scalar weak material derivative for $\image$ in an analogous way.
This finally finishes the proof of the $\liminf$-inequality.
\vspace*{0.3cm}

As a preparatory step for the proof of the $\limsup$-estimate we state a corollary of the preceding proof. 
\begin{proposition}\label{prop:optimal_tuple}
For $u \in L^2([0,1],\imageSpace)$ with $\splinepathenergy^\sigma[u] < \infty$ there exists an optimal tuple $(v,a,z,w) \in \mathcal{C}[u]$ such that 
\begin{equation*}
\splinepathenergy^\sigma[u]=\int_0 ^1 {L}[a,a] + \frac{1}{\delta} w^2 + \sigma \left(L[v,v] +\frac{1}{\delta} z^2 \right) \d x \d t.
\end{equation*}
\end{proposition}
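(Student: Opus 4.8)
The plan is to extract an optimal tuple by the direct method, using the machinery already developed for the $\liminf$-estimate. First I would fix $u \in L^2([0,1],\imageSpace)$ with $\splinepathenergy^\sigma[u] < \infty$ and take a minimizing sequence $(v^n, a^n, z^n, w^n) \in \mathcal{C}[u]$ for the functional $(v,a,z,w) \mapsto \int_0^1\int_\domain L[a,a] + \tfrac{1}{\delta} w^2 + \sigma(L[v,v] + \tfrac{1}{\delta} z^2) \d x\d t$, so that the integral along this sequence tends to $\splinepathenergy^\sigma[u]$. Since the energy is bounded along the sequence, Korn's and Poincar\'e's inequalities (exactly as in the estimates preceding \eqref{eq:wlsc_a}, using that the $L[v,v]$ term controls $\tr(\varepsilon[v]^2)$ and $\gamma|D^m v|^2$, and likewise for $a$) give uniform bounds for $\{v^n\}$ and $\{a^n\}$ in $L^2((0,1),\motionSpace)$, and for $\{z^n\},\{w^n\}$ in $L^2((0,1)\times\domain)$. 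By reflexivity, pass to a subsequence so that $v^n \rightharpoonup v$, $a^n \rightharpoonup a$ in $L^2((0,1),\motionSpace)$ and $z^n \rightharpoonup z$, $w^n \rightharpoonup w$ in $L^2((0,1)\times\domain)$.

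The next step is to verify $(v,a,z,w) \in \mathcal{C}[u]$, i.e.\ that the limit quadruple still satisfies \eqref{eq:first_order_flow}--\eqref{eq:second_derivative} with the \emph{same} image curve $u$. This is where I would reuse the second half of the $\liminf$-proof essentially verbatim, only with the role of the index $K$ played by $n$ and with $u^n \equiv u$ (so the weak convergence $u^n \rightharpoonup u$ is trivial). Concretely: the uniform $C^{1,\alpha}$-type control of $\{v^n\}$ transfers via the flow equation \eqref{eq:first_order_flow} and \cite[Theorem~6]{EfNe19} to uniform boundedness and (subsequential) strong convergence $\psi^n \to \psi$ in $C^{0,\beta}([0,1],C^{1,\beta}(\overline\domain,\overline\domain))$, with $\psi$ the flow of $v$ by weak continuity of the solution operator; the acceleration equation \eqref{eq:second_order_flow} plus the bound on $\{a^n\}$ upgrades this to weak convergence in $H^2((0,1),C^{1,\beta})$ and gives $\ddot\psi_t = a_t\circ\psi_t$ after checking $a^n\circ\psi^n \rightharpoonup a\circ\psi$ in $L^2((0,1)\times\domain)$ by the approximation argument around \eqref{eq:decomposition_approximation}. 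For the material-derivative inequalities, the uniform $L^2$-bound on $\{z^n\}$ places $t\mapsto u_t\circ\psi^n_t$ in a fixed H\"older-in-time ball $A_{1/2,L}$; weak lower semicontinuity of the convex functionals $u\mapsto \int_{\tilde\domain}|u_t(x)-u_s(x)|\d x$ on $A_{1/2,L}$, tested against arbitrary $\tilde\domain\subset\domain$, then yields \eqref{eq:first_derivative} for $z$, and the analogous argument (now with the doubly-integrated second difference) yields \eqref{eq:second_derivative} for $w$. Here one genuinely needs the flow $\psi^n$ associated to $v^n$ to converge, which it does since $v^n$ already converges in the relevant topology — this is cleaner than in the discrete case because there is no extension operator mismatch.

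Finally, weak lower semicontinuity of the quadratic, convex, coercive integrand — the $\tr(\varepsilon[\cdot]^2)$ and $\gamma|D^m\cdot|^2$ terms are weakly lsc on $L^2((0,1),\motionSpace)$ and the $z^2$, $w^2$ terms are weakly lsc on $L^2((0,1)\times\domain)$ — gives
\begin{equation*}
\int_0^1\!\!\int_\domain L[a,a] + \tfrac{1}{\delta} w^2 + \sigma\bigl(L[v,v] + \tfrac{1}{\delta} z^2\bigr) \d x\d t \leq \liminf_{n\to\infty} \int_0^1\!\!\int_\domain L[a^n,a^n] + \tfrac{1}{\delta}(w^n)^2 + \sigma\bigl(L[v^n,v^n] + \tfrac{1}{\delta}(z^n)^2\bigr) \d x\d t = \splinepathenergy^\sigma[u].
\end{equation*}
Combined with the reverse inequality, which is immediate since $(v,a,z,w)\in\mathcal{C}[u]$ is admissible in the infimum defining $\splinepathenergy^\sigma[u]$, this shows the tuple is optimal. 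The main obstacle is the second paragraph: showing the limit quadruple is admissible \emph{for the same $u$}, i.e.\ transferring the flow, acceleration, and both material-derivative inequalities to the limit. But since every ingredient — the solution-operator continuity from \cite{EfNe19}, the approximation lemma around \eqref{eq:fixed_u_composition_convergence}, the weak-closedness of $A_{1/2,L}$, the $\tilde\domain$-localization trick — has just been established in the $\liminf$-proof, this is a matter of transcription rather than new work, which is presumably why the authors state it as a corollary.
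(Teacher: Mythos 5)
Your proposal is correct and follows essentially the same route as the paper: the paper's proof is exactly the direct method, citing coercivity (via Korn and Gagliardo--Nirenberg), weak lower semicontinuity of the quadratic integrand, and weak closedness of $\mathcal{C}[u]$ ``verified as above,'' i.e.\ by the same reuse of the $\liminf$-arguments that you spell out. The only difference is that the paper compresses the admissibility-of-the-limit step into one sentence, whereas you make the transcription explicit.
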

\begin{proof}
The functional $\splinepathenergy^\sigma$ is coercive by Korn's inequality and Gagliardo-Nirenberg interpolation estimate and it is clearly weak lower semicontinuous. Since $\mathcal{C}[\image]$ is a subset of a reflexive Banach space, then we just have to show weak closedness of the set.
This is verified as above.
\end{proof}

\subsubsection*{Proof of the $\limsup$ estimate and the construction of a recovery sequence.}
Consider an image curve $u \in L^2([0,1],\imageSpace)$ with finite regularized spline energy. Then, the previous proposition guarantees the existence of  an associated optimal velocity field, an acceleration field and the first and second order weak material derivatives, denoted by $(v,a,z,w)\in\mathcal{C}[u]$, respectively, i.e. 
\begin{align*}
\mathcal{F}^{\sigma}[u]\!=\!
\!\int_0^1\! \!\int_\domain\! {L}[a,a] \!+\! \frac{1}{\delta} w^2 + \sigma \!\left(\!L[v,v] \!+\!\frac{1}{\delta} z^2 \!\right)\! \d x \d t.
\end{align*}
We define 
\begin{equation}\label{eq:repre_limsup}
\deformation^K_k\coloneqq \psi_{t^K_{k-1},t^K_k}=\psi_{t^K_k}\circ\psi^{-1}_{t^K_{k-1}}, ~ k=1,\dots,K,
\end{equation}
where $\psi_t$ is the flow associated with velocity $v$ and $\psi_0=\Id$.
We have 
\begin{align}
\max_{k=1,\dots,K} \|\deformation_{k}^K - \Id\|_{C^1(\overline{\Omega})} 
\leq& \sup_{|s-t|\leq \frac{1}{K}} \|\psi_{s,t}-\Id\|_{C^1(\overline{\Omega})} \label{eq:lim_sup_phi_close_to_id}\\
\leq&\sup_{\vert t-s\vert\leq K^{-1}} C\left\vert\int_s^t\Vert v_r \circ \psi_r \Vert_{H^m(\Omega)}\d r\right\vert \nonumber\\
\leq&\sup_{\vert t-s\vert\leq K^{-1}} C\left\vert\int_s^t\Vert v_r \Vert_{H^m(\Omega)}\d r\right\vert \nonumber\\
\leq& CK^{-\frac{1}{2}}\!\sup_{\vert t-s\vert\leq K^{-1}}\! \left\vert\int_s^t\Vert v_r\Vert_{H^m(\Omega)}^2\d r\right\vert^\frac{1}{2}\!\leq\! CK^{-\frac12}\sqrt{\mathcal{F}^{\sigma}[u]},\nonumber
\end{align}
by Lemma \ref{lemma:sobolev_upper_bound} and Cauchy's inequality. For the second inequality we used \cite[Lemma 3.5]{BrVi17} which states that
\begin{equation}\label{eq:H^m_composition_estimate}
\|v_r \circ \psi_r\|_{H^m(\domain)} \leq C \|v_r\|_{H^m(\domain)}.
\end{equation}
Thereby, for $K$ large enough we have $\defvec \in \deformationSpace^K$ and we are in a position to define 
\begin{equation*}
u^K\change{\coloneqq}\mathcal{U}^\change{K}[\imagevec^K,\defvec^K],~ \imagevec^K(\cdot)\change{\coloneqq}(u(t^K_0,\cdot),\dots,u(t^K_K,\cdot)),
\end{equation*}
where the point time evaluation is possible since $u \in C^\change{{1}}([0,1],\imageSpace)$ (cf. \cite[Remark 1]{EfKoPo19a}).

In what follows we present more detailed arguments for the second order terms, while the arguments for the first order terms follow as in \cite[Theorem 14]{EfNe19}.

First, we are able to \change{relate} the discrete and the continuous second order material derivative by
\begin{align*}
&\int_\domain |\image^K_{k+1} \circ \deformation^K_{k+1} \circ \deformation^K_k - 2 \image^K_k \circ \deformation^K_k + \image^K_{k-1}|^2 \d x \\
=&\int_\domain |u_{t^K_{k+1}}\circ\psi_{t^K_{k-1},t^K_{k+1}} - 2u_{t^K_k}\circ \psi_{t^K_{k-1},t^K_{k}} + u_{t^K_{k-1}}|^2 \d x\\
=&\int_\domain |u_{t^K_{k+1}}\!\circ\! \psi_{t^K_{k+1}} \!-\! 2u_{t^K_k}\!\circ\!\psi_{t^K_k} \!+\! u_{t^K_{k-1}}\!\circ\! \psi_{t^K_{k-1}}|^2
\cdot \det (D\psi_{t^K_{k-1}}) \d x \\
\leq& \int_\domain \left(\int_{t^K_{k-1}}^{t^K_k} \int_0^{\frac{1}{K}} w_{r+s} \circ \psi_{r+s} \d r \d s \right)^2 \det (D\psi_{t^K_{k-1}}) \d x\\
\leq&\frac{1}{K^2} \int_0^{\frac{1}{K}} \int_{t^K_{k-1}}^{t^K_k} \int_\domain w_{r+s}^2  \det (D\psi_{r+s,t^K_{k-1}}) \d x \d s \d r \\
\leq&\frac{1}{K^2} (1+CK^{-\frac{1}{2}})  \int_0^{\frac{1}{K}} \int_{t^K_{k-1}}^{t^K_k} \int_\domain w_{r+s}^2 \d x \d s \d r.
\end{align*}
Here, we first used \eqref{eq:repre_limsup} and the transformation formula for the first and the second equality, respectively, then the definition of the second order material derivative \eqref{eq:second_central_variational_inequality} for the first inequality, and the Cauchy-Schwarz inequality and \eqref{eq:lim_sup_phi_close_to_id} in the last two estimates. 
%The first inequality follows from the defining variational inequality of the second order material derivative.
Summing the above expressions over $k=1,\ldots, K-1$, we obtain
\begin{align}
K^3 \sum_{k=1}^{K-1} \int_\domain |\image^K_{k+1} \circ \deformation^K_{k+1} \circ \deformation^K_k - 2 \image^K_k \circ \deformation^K_k + \image^K_{k-1}|^2 \d x 
\leq&K(1+CK^{-\frac{1}{2}}) \int_0^{\frac{1}{K}}\int_0^{1-\frac{1}{K}} \int_\domain w^2_{r+s} \d x \d s \d r \nonumber\\
\leq& K(1+CK^{-\frac{1}{2}}) \int_0^{\frac{1}{K}} \int_0^{1} \int_\domain w^2_t \d x \d t \d r \nonumber\\
\leq & (1+CK^{-\frac{1}{2}}) \int_0^{1} \int_\domain w^2_t \d x \d t. \label{eq:lim_sup_w_bound}
\end{align}
Next, we express $a^K_k$ in terms of $a$:
\begin{align*}
a_k^K &= K^2(\deformation^K_{k+1} \circ \deformation^K_k - 2 \deformation^K_k + \Id)\\
&=K^2(\psi_{t^K_{k-1},t^K_{k+1}} - 2\psi_{t^K_{k-1},t^K_{k}}+\psi_{t^K_{k-1},t^K_{k-1}})\\
&=K^2 \left(\int_{t^K_k}^{t^K_{k+1}} \dot{\psi}_t \circ \psi^{-1}_{t^K_{k-1}}\d t - \int_{t^K_{k-1}}^{t^K_k}\dot{\psi}_t \circ \psi^{-1}_{t^K_{k-1}}\d t \right) \\
&=K^2 \left(\int_{t^K_{k-1}}^{t^K_k}\int_0^{\frac{1}{K}} \ddot{\psi}_{t+\tau} \circ\psi^{-1}_{t^K_{k-1}} \d \tau \d t \right)\\
&= K^2 \left(\int_{t^K_{k-1}}^{t^K_k}\int_0^{\frac{1}{K}} a_{t + \tau} \circ \psi_{t^K_{k-1},t+\tau} \d \tau \d t \right),
\end{align*}
where in the second equality we used  \eqref{eq:repre_limsup}, and in the last equality \eqref{eq:second_order_flow}. Then, using the Cauchy-Schwarz inequality and \eqref{eq:H^m_composition_estimate} we obtain the following estimate
\begin{align}
\max_k \|a_k^K\|_{C^1(\overline{\Omega})} 
\leq CK^{\frac12}\!\sup_{t\in[0,1],\ 0<\tau\leq K^{-1}}\!\left\vert\int_{t}^{t+2\tau}\!\left\Vert a_s\right\Vert^2_{H^{m}(\Omega)} \d s \right\vert^{\frac12}\!.\!\label{eq:limsup_acceleration_norm_estimate}
\end{align}
The same Taylor expansion argument as in \eqref{eq:taylor_exp1} and \eqref{eq:taylor_exp_r} now implies, together with $\eqref{eq:limsup_acceleration_norm_estimate}$
\begin{align}
&\!\int_\Omega\!\energyDensity_A(Da_k^K)+\gamma\vert D^{m}a_k^K\vert^2\d x \nonumber\\
\!\leq\!& \!\int_\Omega\! L[a_k^K,a_k^K]\d x \!+\!CK^{-\frac32}\sqrt{\mathcal{F}^{\sigma}[u]}. \label{eq:lim_sup_taylor}
\end{align}
%Summing the second term on the right hand side over $k$ and taking \eqref{eq:limsup_acceleration_norm_estimate} into account, we obtain
%\begin{align*}
%\frac{C}{K}\sum_{k=1}^{K-1}\int_\domain \vert Da_k^{K}\vert^3\d x\leq \frac{C}{K}\sum_{k=1}^{K-1}\Vert a_k^K\Vert^3_{C^1(\overline{\domain})}\leq CK^{-\frac{3}{2}}.
%\end{align*}
Applying Jensen's inequality twice on $L$, and taking into account the above \change{relation} between $a_k^K$ and $a$ gives
\begin{align*}
&\int_\Omega L[a_k^K,a_k^K]\d x\\
=&\int_\Omega K^4 L\left[\int_{t^K_{k-1}}^{t^K_k}\int_0^{\frac{1}{K}} a_{t + \tau} \circ \psi_{t^K_{k-1},t+\tau} \d \tau \d t, \int_{t^K_{k-1}}^{t^K_k}\int_0^{\frac{1}{K}} a_{t + \tau} \circ \psi_{t^K_{k-1},t+\tau} \d \tau \d t\right]\d x\\
\leq& \int_\Omega K^{2}\int_{t^K_{k-1}}^{t^K_k}\int_0^{\frac{1}{K}} L\big[a_{t + \tau} \circ \psi_{t^K_{k-1},t+\tau},   a_{t + \tau} \circ \psi_{t^K_{k-1},t+\tau} \big] \d \tau \d t \d x.
\end{align*}
We now estimate the summands of $L$ individually. For the first term we use that $|\tr(AB)|\!\leq\! |\tr(A)|\!+\!|\tr A(B-\Id)|$, \eqref{eq:lim_sup_phi_close_to_id} and transformation rule to get
\begin{align*}
&\int_\Omega\int_{t^K_{k-1}}^{t^K_k}\int_0^{\frac{1}{K}}\tr\left(\varepsilon[a_{t + \tau} \circ \psi_{t^K_{k-1},t+\tau}]^2\right) \d \tau \d t \d x\\
\!\leq&\! \!\int_\Omega\!\int_{t^K_{k-1}}^{t^K_k}\!\int_0^{\frac{1}{K}}\!\!\tr\left((\varepsilon[a_{t+\tau}]\circ \psi_{t^K_{k-1},t+\tau}) ^2\right)
\!+\! \tr\left((\varepsilon[a_{t+\tau}]\circ \psi_{t^K_{k-1},t+\tau}) ^2 (\varepsilon[\psi_{t^K_{k-1},t+\tau}]^2\!-\!\Id)\right) \d \tau \d t \d x \\
\!\leq&\! \!\int_\Omega\!\int_{t^K_{k-1}}^{t^K_k}\!\int_0^{\frac{1}{K}}\!\!\tr\left(\varepsilon[a_{t+\tau}]^2\right) \!+\!CK^{-\frac{1}{2}}\Vert a_{t+\tau}\Vert^2_{H^{m}(\Omega)} \d \tau \d t \d x.
\end{align*}
For the second term we use \eqref{eq:H^m_composition_estimate} and the fact for any $0\leq \tilde{m} \leq m$ and $f \in H^m(\domain,\R^d), g \in H^{\tilde{m}}(\domain,\R^d)$ we have $\|fg\|_{H^{\tilde{m}}(\domain)}\leq C \|f\|_{H^m(\domain)}\|g\|_{H^{\tilde{m}}(\domain)}$ \cite[Lemma 2.3]{InKaTo13} to get
\begin{align*}
\int_\domain \vert D^{m}(a_{t + \tau} \circ \psi_{t^K_{k-1},t+\tau}) \vert \d x
 \leq \int_\domain |D^{m-1}a_{t+\tau}\circ \psi_{t^K_{k-1},t+\tau}|\d x+CK^{-\frac{1}{2}}\Vert a_{t+\tau}\Vert_{H^{m}(\Omega)},
\end{align*}
and \change{iterating} this argument and using the transformation formula we have
\begin{align*}
\int_\Omega\int_{t^K_{k-1}}^{t^K_k}\int_0^{\frac{1}{K}}\vert D^{m}(a_{t + \tau} \circ \psi_{t^K_{k-1},t+\tau}) \vert^2\d \tau \d t \d x
\leq\int_{t^K_{k-1}}^{t^K_k}\int_0^{\frac{1}{K}}\vert a_{t + \tau}\vert^2_{H^{m}(\Omega)}+CK^{-\frac{1}{2}}\Vert a_{t+\tau}\Vert^2_{H^{m}(\Omega)}\d \tau \d t.
\end{align*}
\change{Altogether} with \eqref{eq:limsup_acceleration_norm_estimate} and \eqref{eq:lim_sup_taylor} we have
\begin{align*}
\frac{1}{K}\sum_{k=1}^{K-1}\int_\Omega\energyDensity_A(Da_k^K)+\gamma\vert D^{m}a_k^K\vert^2
\leq & K \int_0^{\frac{1}{K}} \int_0^{1-\frac{1}{K}} \int_\domain L[a_{t + \tau},a_{t+\tau}] + \bigO(K^{-\frac12}) \d t \d \tau \\
\leq & \int_0^{1} \int_\domain L[a_{t},a_{t}] + \bigO(K^{-\frac12}) \d t,
\end{align*}
which together with \eqref{eq:lim_sup_w_bound} gives
\begin{equation*}
\mathcal{F}^K[u^K] \leq \mathcal{F}[u] + \bigO(K^{-\frac12}).
\end{equation*}
%Combining all estimates we obtain then
%\begin{align*}
%&\mathcal{F}^K[u^K]\\
%\leq & \frac{1}{K}\sum_{k=1}^{K-1}\int_\Omega\energyDensity_A(Da_k^K)+\gamma\vert D^{m-1}a_k^K\vert^2+
%\frac{1}{\delta}|w_k^K|^2\d x\\
%%\leq & \int_0^{1} \int_\domain \frac{1}{\delta}w^2(t,x) \d x \d t+\sum_{k=1}^{K-1} K^{3}\int_\Omega L^{m-1}[a_k^K,a_k^K]\d x+\bigO(K^{-\frac{1}{2}})\\
%\leq & \int_0^{1} \int_\domain \frac{1}{\delta}w^2(t,x) \d x +\sum_{k=1}^{K-1} K\int_\Omega\int_{t_{k-1}}^{t_k}\int_0^{\frac{1}{K}}L^{m-1}[a(t+\tau,\cdot),a(t+\tau,\cdot)]\d \tau \d t \d x + \bigO(K^{-\frac{1}{2}})\\
%\leq & \int_0^{1} \int_\domain \frac{1}{\delta}w^2(t,x) \d x +\int_\Omega\int_{0}^{1}L^{m-1}[a(t,\cdot),a(t,\cdot)]\d t \d x + \bigO(K^{-\frac{1}{2}})\\
%= &\mathcal{F}(u)+\bigO(K^{-\frac{1}{2}}),
%\end{align*}
This readily implies the $\limsup$-inequality for the pure spline part of the functional $\mathcal{F}^{\sigma,K}$.

Following analogous steps as above, we can fully repeat the procedure from \cite[Theorem 14]{EfNe19} 
%to bound the energy density terms $\energyDensity_D(D\deformation^K_k)$ and high-order Sobolev seminorm by the elliptic term $L^m$:
%\begin{align*}
%&\int_\Omega\energyDensity_D(D\deformation_k^K)+\gamma\vert D^m\deformation_k^K\vert^2\d x\\
%\leq&\int_\Omega L^m[v_k^K,v_k^K]\d x+C\int_\Omega\Vert Dv_k^K\Vert^3\d x\\
%\leq&\int_\Omega L^m[v,v]\d x+O(K^{-\frac{1}{2}}),
%\end{align*}
%as well as the discrete material derivative term
%\begin{align*}
%&\int_\domain |\image^K_{k} \circ \deformation^K_k - \image^K_{k-1} |^2 \d x \\
%\leq & \frac{1}{K}(1+CK^{-\frac12}) \int_{t^K_{k-1}}^{t^K_k} \int_\domain z^2(s,x) \d x \d s
%\end{align*}
to obtain the estimate for the lower order path energy
\begin{align*}
%\label{eq:path_energy_estimate}
\mathcal{E}^K[u^K]\leq \mathcal{E}[u]+\bigO(K^{-\frac{1}{2}}),
\end{align*}
which finally proves the $\limsup$-inequality.

We are left to show that $\image^K \to \image$ in $L^2((0,1)\change{,\imageSpace})$ as $K \to \infty$. 
\change{To this end we introduce the piecewise constant interpolation
\begin{equation}\label{eq:pw_constant_u}
\bar{u}^K_t \coloneqq
\begin{cases}
u^K_0,& t \in [0,t^K_{\frac12}],\\
u^K_k, & t \in (t^K_{k-\frac12},t^K_{k+\frac12}], ~ k=1,\dots,K-1,\\
u^K_K, & t \in [t^K_{K-\frac12},1].
\end{cases}
\end{equation}}
\change{ Then, for $t \in (t^K_{k-\frac12}, t^K_{k+\frac12}]$ with $k=1,\dots,K-1$ we estimate
\begin{align*}
\|u^K_t - \bar{u}^K_{t}\|^2_\imageSpace 
 \leq& C \left(\|(u^K_{k-1}-u^K_k \circ \phi^K_k)\circ x^K_{t}\|^2_\imageSpace 
  + \|(u^K_{k+1} \circ \phi^K_{k+1} \circ \phi^K_k -2 u^K_k \circ \phi^K_k+ u^K_{k-1})\circ x^K_{t}\|^2_\imageSpace \right. \nonumber \\
& \quad \left. + \|u^K_k \circ \phi^K_k \circ x^K_{t} - u^K_k\|^2_\imageSpace \right) \nonumber \\
% \leq& C \left(\|u^K_{k-1}-u^K_k \circ \phi^K_k\|^2_\imageSpace \|\det Dy^K\|_{L^\infty((0,1)\times \domain)} + \|u^K_{k+1} \circ \phi^K_{k+1} \circ \phi^K_k -2 u^K_k \circ \phi^K_k+ u^K_{k-1})\|^2_\imageSpace \|\det Dy^K\|_{L^\infty((0,1)\times \domain)} \right. \nonumber\\
\leq& C \left(K^{-2}\|\hat{z}^K_k\|^2_{L^2(\domain)} \|\det Dy^K\|_{L^\infty((0,1)\times \domain)}  + K^{-4}\|\hat{w}^K_k\|^2_{L^2(\domain)} \|\det Dy^K\|_{L^\infty((0,1)\times \domain)} \right. \nonumber\\
 & \left. \quad ~ + \|u^K_k \circ \phi^K_k \circ x^K_{t} - u^K_k\|^2_\imageSpace \right) \nonumber\\
\leq & C \left( K^{-1} \|z\|^2_{L^2((0,1)\times \domain)} + CK^{-3} \|w\|^2_{L^2((0,1)\times \domain)} + \|u^K_k \circ \phi^K_k \circ x^K_{k,t} - u^K_k\|^2_\imageSpace \right), %\label{eq:pw_constant_vs_spline1}
\end{align*}
where we used the transformation formula, \eqref{eq:lim_sup_phi_close_to_id}, \eqref{eq:lim_sup_w_bound} and an analogous estimate for the first order material derivative.}
\change{For every $t \in (0,1)$ we can find a sequence $\{k(K)\}_{K \in \N}$ such that for any $K$ large enough $t \in (t^K_{k(K)-\frac12}, t^K_{k(K)+\frac12}]$.  We uniformly approximate the sequence $\{u^K_{k(K)}\}_{K \in \N}$ by smooth functions (cf. \eqref{eq:fixed_u_composition_convergence}) and use \eqref{eq:lim_sup_phi_close_to_id} and \eqref{eq:limsup_acceleration_norm_estimate} to prove
\begin{equation*}
\|u^K_{k(K)} \circ \phi^K_{k(K)} \circ x^K_{t} - u^K_{k(K)}\|^2_\imageSpace \to 0,~ \text{uniformly in } t.
\end{equation*} }
\change{Plugging this back above we have that $u^K - \bar{u}^K \to 0 $ in $L^\infty([0,1],\imageSpace)$ as $K \to \infty$. On the other hand, as $\{\bar{u}^K\}_{K \in \N}$ is a sequence of piecewise constant approximations of $u \in C^1([0,1],\imageSpace)$ we have that $\bar{u}^K \to u$ in $L^2([0,1],\imageSpace)$ as $K \to \infty$. Altogether, $u^K \to u$ in $L^2((0,1),\imageSpace)$ as we wanted to show. This concludes the proof of $\limsup$ estimate and the construction of recovery and thus concludes the proof of Mosco convergence.}

\hspace*{0.1cm}

As a corollary of the previous theorem we are able to show the existence of the continuous time spline defined in Section~\ref{sec:time_continuous}.
To this end, let $J\geq 2$ and $(t_1,\ldots,t_J)\subset[0,1]\cap \mathbb{Q}$ be a sequence of fixed times.
Then for infinitely many  $K\in\mathbb{N}$ one can choose $i_j^K\coloneqq K\cdot t_j\in\mathbb{N}$ for all $j=1,\ldots,J$.
Let $({u}^I_j)_{j=\change{1},\ldots,J}\subseteq\mathcal{I}$ be the set of constraint images at the corresponding constraint times. 

\begin{theorem}[Convergence of discrete spline interpolations to continuous ones]\label{thm:convergence_and_existence}
For every $K$ that satisfies the above condition, let $u^K \in L^2([0,1],\imageSpace)$ be a minimizer of $\mathcal{F}^{\sigma,K}$ among the image curves satisfying $u^K=\mathcal{U}^\change{K}[\imagevec^K,\defvec^K]$
with $u^K_{i_j^K}={u}^I_j$ for all $j \in \{\change{1},\dots,J\}$.
Then a subsequence of $\lbrace u^K\rbrace_{K\in\N}$ converges weakly in $L^2([0,1],\imageSpace)$ to a minimizer of the continuous spline energy $\mathcal{F}^\sigma$ as $K\rightarrow\infty$.
This minimizer satisfies $u_\change{{t_j}}={u}^I_j$  for all $j \in \{\change{1},\dots,J\}$ and the associated sequence of discrete energies converges to the minimal continuous spline energy. 
\end{theorem}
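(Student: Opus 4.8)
The plan is to run the standard Mosco-convergence argument for constrained minimization, using Theorem~\ref{thm:mosco_convergence} as the engine and the $K$-uniform bound \eqref{eq:finite_upper_bound_discrete} for compactness. First I would fix a curve $\bar u\in H^2((0,1),\imageSpace)$ with $\bar u_{t_j}=u^I_j$ and vanishing transport velocity; then $(0,0,|\dot{\bar u}|,|\ddot{\bar u}|)\in\mathcal{C}[\bar u]$, so $\splinepathenergy^\sigma[\bar u]<\infty$, and by \eqref{eq:finite_upper_bound_discrete} the discrete energies $\SplinePathenergy^{\sigma,K,D}[\bar{\imagevec}^K,\Id^K]$ are bounded by a constant independent of $K$. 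Since $\mathcal{U}^K[\bar{\imagevec}^K,\Id^K]$ meets the discrete interpolation constraints (here $i_j^K=Kt_j$) and $u^K$ is a constrained minimizer, this gives $\splinepathenergy^{\sigma,K}[u^K]\leq C$ for all $K$. From this bound, Lemma~\ref{lemma:sobolev_upper_bound} yields \eqref{eq:deformation_close_to_identity}, hence two-sided bounds on $\det D\phi^K_k$ for $K$ large, and the induction of \eqref{eq:uniformBoundImagesInduction} (using that the prescribed values $u^I_j$ do not move with $K$) bounds $\imagevec^K$ in $\imageSpace^{K+1}$, so $u^K=\mathcal{U}^K[\imagevec^K,\defvec^K]$ is bounded in $L^2((0,1),\imageSpace)$ and a subsequence satisfies $u^K\rightharpoonup u$. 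The $\liminf$-part of Theorem~\ref{thm:mosco_convergence} then gives $\splinepathenergy^\sigma[u]\leq\liminf_K\splinepathenergy^{\sigma,K}[u^K]\leq C<\infty$, which makes $\mathcal{C}[u]$ non-empty and $u\in C^1([0,1],\imageSpace)$, so that the point values $u_{t_j}$ are meaningful.

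The crucial and most delicate step is to show that the limit respects the interpolation constraints. For this I would recycle estimates from the proof of Theorem~\ref{thm:mosco_convergence}: the curves $u^K\circ\psi^K$ are uniformly bounded in $C^{0,\tfrac12}([0,1],\imageSpace)$ and converge weakly to $u\circ\psi$ in $L^2((0,1),\imageSpace)$, and weak convergence together with a uniform modulus of continuity in time upgrades this to $u^K_t\circ\psi^K_t\rightharpoonup u_t\circ\psi_t$ in $\imageSpace$ for \emph{every} $t$; moreover $\psi^K\to\psi$ in $C^0([0,1],C^1(\overline\domain,\overline\domain))$. On the other hand, the estimate controlling $\|u^K_t-\bar u^K_t\|_\imageSpace$ from the recovery-sequence part of that proof, where $\bar u^K$ is the piecewise constant interpolant of the discrete nodes, applies verbatim: the $K$-uniform energy bound makes the discrete first and second material-derivative contributions $o(1)$, and there remains only the composition error $\|u^K_k\circ\phi^K_k\circ x^K_t-u^K_k\|_\imageSpace$. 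Evaluated at $t=t_j=t^K_{i_j^K}$, where $\bar u^K_{t_j}$ equals the \emph{fixed} value $u^K_{i_j^K}=u^I_j$ and $\phi^K_{i_j^K}\circ x^K_{t_j}\to\Id$ in $C^1$, the approximation estimate \eqref{eq:fixed_u_composition_convergence} gives $u^K_{t_j}\to u^I_j$ strongly in $\imageSpace$, hence $u^K_{t_j}\circ\psi^K_{t_j}\to u^I_j\circ\psi_{t_j}$ strongly as well. Matching this strong limit with the weak limit above gives $u_{t_j}\circ\psi_{t_j}=u^I_j\circ\psi_{t_j}$, and composing with the diffeomorphism $\psi_{t_j}^{-1}$ yields $u_{t_j}=u^I_j$.

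Finally, for optimality and convergence of the energies I would compare against an arbitrary competitor $\tilde u$ with $\tilde u_{t_j}=u^I_j$ and $\splinepathenergy^\sigma[\tilde u]<\infty$ (such $\tilde u$ exist — take $\bar u$ from above). Then $\tilde u\in C^1([0,1],\imageSpace)$, so the recovery sequence of the $\limsup$-part of Theorem~\ref{thm:mosco_convergence}, built from the time samples $(\tilde u_{t^K_k})_k$, has discrete values equal to $u^I_j$ at the indices $i_j^K=Kt_j$ and is thus admissible for the constrained discrete problem; minimality of $u^K$ then gives $\splinepathenergy^\sigma[u]\leq\liminf_K\splinepathenergy^{\sigma,K}[u^K]\leq\liminf_K\splinepathenergy^{\sigma,K}[\tilde u^K]\leq\splinepathenergy^\sigma[\tilde u]$. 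Hence $u$ minimizes $\splinepathenergy^\sigma$ among the constrained curves, and taking $\tilde u=u$ in the same chain gives $\limsup_K\splinepathenergy^{\sigma,K}[u^K]\leq\splinepathenergy^\sigma[u]$, so that $\splinepathenergy^{\sigma,K}[u^K]\to\splinepathenergy^\sigma[u]=\min$.

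I expect the second step to be the main obstacle: weak $L^2$-in-time convergence sees no point values, so one really has to transport the uniform Hölder-$\tfrac12$ time regularity of $u^K\circ\psi^K$ through the limit and, at the interpolation nodes, reconcile the prescribed discrete values $\imagevec^K_{i_j^K}$ with the temporal extension $u^K_{t_j}$, whose definition at a node differs from the node value only by terms of order $K^{-2}$ governed by the discrete second material derivative. The remaining steps are essentially a reassembly of estimates already established in Proposition~\ref{prop:optimal_u_existence} and the proof of Theorem~\ref{thm:mosco_convergence}.
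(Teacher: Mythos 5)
Your proposal is correct and follows essentially the same route as the paper's proof: a $K$-uniform energy bound obtained from a smooth interpolating competitor, weak compactness of the constrained discrete minimizers, the two halves of Theorem~\ref{thm:mosco_convergence} for minimality and convergence of the energies, and the uniform estimate $\|u^K_t-\bar u^K_t\|_{\imageSpace}\to 0$ for the piecewise constant interpolant combined with pointwise-in-time weak convergence to recover the interpolation constraints. Your observation that at the nodes $t_j=t^K_{i_j^K}$ the composition error involves only the \emph{fixed} key frame $u^I_j$, so that \eqref{eq:fixed_u_composition_convergence} applies directly without uniformly approximating a non-compact family of images, is a welcome sharpening of a step the paper handles more tersely by invoking a trace-theorem-type argument from \cite{BeEf14}.
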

\begin{proof}
For $j=1,\dots,J$ let $\eta^j:[0,1]\to \R$ be smooth functions with $\eta^j(t_i)=\delta_{ij}$.
We define a smooth interpolating curve of the fixed images $\tilde{u}(t)\coloneqq\sum_{j=0}^J\eta^{j}(t){u}^I_j$.
%For $a\in[0,1]$, $\epsilon>0$, let $\eta^{a,\epsilon}\in C^\infty_c([0,1])$ be a smooth function centred at $a$, with $\text{supp}(\eta^{a,\epsilon})\subseteq[a-\epsilon,a+\epsilon]\cap[0,1]$, and $\eta^{a,\epsilon}(a)=1$. Define $\epsilon<\frac{1}{2}\min_j\lbrace t_{j+1}-t_j\rbrace$, $\tilde{u}(t)\coloneqq\sum_{j=0}^J\eta^{t_j,\epsilon}(t){u}^I_j$. Then, the image curve $\tilde{u}\in C^\infty([0,1],\imageSpace)$ is a smooth interpolating curve of the fixed images.
Let \change{us define the vector} $\tilde{\imagevec}^K\coloneqq(\tilde{u}(t^K_\change{1}),\ldots, \tilde{u}(t_K^K))$ and \change{its time extension} $\tilde{u}^K\coloneqq\mathcal{U}^\change{K}[\tilde{\imagevec}^K,\Id^K]$.
This image curve gives an admissible candidate for a minimizer of the functional $\mathcal{F}^{\sigma,K}$.
Namely,
\begin{align*}
\splinepathenergy^{\sigma,K}[\tilde{\image}^K]\leq& \mathbf{F}^{\sigma,K,D}[\tilde{\imagevec}^K,\Id^K]\\
=&\sigma K\sum_{k=0}^K \int_\domain|\change{\tilde{\image}^K_{k+1}-\tilde{\image}^K_k}|^2 \d x + K^3 \sum_{k=1}^{K-1} \int_\domain |\change{\tilde{\image}^K_{k+1}-2\tilde{\image}^K_k + \tilde{\image}^K_{k-1}}|^2 \d x \\
\leq & C\left(\int_{\domain} |\tilde{\image}|^2_{H^1((0,1))}+|\tilde{\image}|^2_{H^2((0,1))} \d x +1\right),
%\leq&C(M_1+M_2+1)\coloneqq \overline{\mathcal{F}}
\end{align*}
%where $M_1\coloneqq\Vert\max_j\lbrace\vert {u}^I_j\vert\rbrace\Vert_{L^2(\domain)}^2\vert\sup_{t\in[0,1]}\dot{\eta}^{a,\epsilon}(t)\vert^2$, and $M_2\coloneqq\Vert\max_j\lbrace\vert {u}^I_j\vert\rbrace\Vert_{L^2(\domain)}^2\vert\sup_{t\in[0,1]}\ddot{\eta}^{a,\epsilon}(t)\vert^2$,
%both of which are finite and independent of $K$.
where the upper bound is independent of $K$.
As defined above, let $\{\image^K\coloneqq\mathcal{U}^K[\imagevec^K,\defvec^K]\}_{K \in \N}$, where $\imagevec^K,\defvec^K$ are  optimal pairs for the discrete spline (see Theorem \ref{thm:discrete_spline_existence}). In particular, we have  $\mathcal{F}^{\sigma,K}[\image^K]=\mathbf{F}^{\sigma, K}[\imagevec^K,\defvec^K]<\overline{\mathcal{F}}$.
%The spline energy of $\image^K$ can be bounded by 
%$$\splinepathenergy^{\sigma,K}[\image^K]\leq\splinepathenergy^{\sigma,K}[\tilde\image^K]=C(M_1+M_2)\eqqcolon\overline{\splinepathenergy}.$$
%For optimal vectors of images $\mathbf{u}^K$ and deformations $\defvec^K$ in the definition of $\mathcal{F}^{\sigma,K}$, we apply the temporal extension construction from Section~\ref{sec:time_extension}. In particular, it holds $\mathbf{F}^{\sigma, K}[\mathbf{u}^K,\mathbf{\Phi}^K]\leq\overline{\mathcal{F}}$ for all $K \in \N$. 
%Following the same steps as in the proof of the $\liminf$-inequality, we can conclude that the discrete transport path converges uniformly to the identity mapping and that $z^K$ and $w^K$ are uniformly bounded in $L^2((0,1)\times\Omega)$ and the uniform boundedness of $\psi^K$ in $C^1([0,1],C^{1,\alpha}(\overline{\Omega}))$.
As in \eqref{eq:uniformBoundImagesInduction} we show uniform boundedness of $u^K_k$ in $L^2(\Omega)$. This further implies, together with boundedness of the deformations and the convergence of the discrete transport paths to the identity that $u^K_t$ is uniformly bounded in $L^2(\domain)$.
Therefore, $\lbrace u^K\rbrace_{K\in\N}$ is uniformly bounded in $L^\infty([0,1],\imageSpace)$ and a subsequence converges weakly to some $u\in L^2([0,1],\imageSpace)$. 

Now, we follow the usual argument and assume that there exists an image path $\hat{u}\in L^2([0,1],\mathcal{I})$ with finite energy
%with the corresponding optimal tuple $(\hat{v},\hat{a},\hat{z},\hat{w})$, which exists due to Proposition~\ref{prop:optimal_tuple},
such that $\mathcal{F}^\sigma[\hat{u}]<\mathcal{F}^\sigma[u].$
By the $\limsup$-part of Theorem~\ref{thm:mosco_convergence} there exists a sequence $\lbrace\hat{u}^K\rbrace_{K\in\N}\subseteq L^2((0,1),\mathcal{I})$ such that  $\limsup_{K\rightarrow\infty} \mathcal{F}^{\sigma,K}[\hat{u}^K]\leq\mathcal{F}^\sigma[\hat{u}]$. Now, we apply the $\liminf$-part of Theorem~\ref{thm:mosco_convergence}, thus obtaining
\begin{align}
\mathcal{F}^{\sigma}[u]\leq& \liminf_{K\rightarrow\infty}\mathcal{F}^{\sigma, K}[u^K]
\leq{\liminf}_{K\rightarrow\infty}\mathcal{F}^{\sigma, K}[\hat{u}^K]\leq\mathcal{F}^\sigma[\hat{u}],\label{eq:thm2_2}
\end{align}
which is a contradiction to the above assumption. Hence, $u$ minimizes the continuous spline energy over all admissible image curves and discrete spline energies converge to the limiting spline energy along a subsequence, i.e. $\lim_{K\rightarrow\infty}\mathcal{F}^{\sigma,K}[u^K]=\mathcal{F}^\sigma[u]$, which follows from \eqref{eq:thm2_2} by using $\hat{u}=u$.

Finally, we show that $u_{t_j}=u^I_j$. 
%To this end we introduce the piecewise constant interpolation
%\begin{equation*}
%\bar{u}^K_t=
%\begin{cases}
%u^K_0,& t \in [0,t^K_{\frac12})\\
%u^K_k, & t \in [t^K_{k-\frac12},t^K_{k+\frac12}), ~ k=1,\dots,K-1\\
%u^K_k, & t \in [t^K_{K-\frac12},1],
%\end{cases}
%\end{equation*}
%and \change{straightforwardly} check that $u^K_t-\bar{u}^K_t \to 0$ in $\imageSpace$, uniformly in $t$.
\change{To this end recall that for the sequence of  piecewise constant approximations $\{\bar{u}^K\}_{K \in \N}$ given by \eqref{eq:pw_constant_u} we showed that $u^K_t-\bar{u}^K_t \to 0$ in $\imageSpace$, uniformly in $t$.} 
Together with $u^K_t \rightharpoonup u_t$ in $\imageSpace$ for every $t \in [0,1]$ which is showed by a trace theorem type argument (see \cite[Theorem 4.1, (iv)]{BeEf14}) we have the needed result since $\bar{u}^K_{t_j}={u}^I_j$.
\end{proof}
The analogous result for arbitrary $(t_1,\ldots,t_J)\subset[0,1]$ follows from density of $\mathbb{Q}$ in $[0,1]$.
Let us remark that in light of Proposition~\ref{prop:equivalence_of_approaches} \change{we have that} for optimal scalar quantities $z,w$ it holds $z=|\hat{z}|$ and $w=|\hat{w}|$.

%%%%%%%%%%%%%%%%%%%%%%%%%%%%%%%%%%%%%%%%%%%%%%%%%%%%%%%%%%%%%%%%%%%%%%%%%

\section{Fully Discrete Metamorphosis Splines}\label{sec:fully_discrete}
To \change{numerically} implement splines for image metamorphosis we have to further discretize the space.
Here, we present a model for $c$ image channels and a two dimensional image domain $\Omega\coloneqq[0,1]^2$ \change{and follow the fully discrete version of image metamorphosis introduced in~\cite{EfKoPo19a}.} 
\change{Before presenting the details of the space discretization,} to avoid double warping in the second material derivative term \eqref{eq:discrete_w_definition} and to further increase the robustness of the model  
we explicitly introduce a vector valued material derivative $\bar z \in L^2((0,1),L^2(\domain,\R^\change{c}))$\change{. This leads to} a relaxation of \eqref{eq:continuous_spline_energy}:
\change{
\begin{align*}
&\mathcal{F}^\sigma[u]
\coloneqq\inf_{(v,a,\change{\hat{z},\bar{z}},w)}\int_0^1 \int_{\Omega} L[a,a] + \frac{1}{\delta} |w|^2 +\sigma(L[v,v]+\frac{1}{\delta}|\bar{z}|^2)+\frac{1}{\theta}|\bar{z}-\hat z|^2 \d x \d t,
\end{align*}
}
with a penalty on the misfit of the new variable $\bar{z}$ and the actual material derivative $\hat z$ \change{given by} \eqref{eq:first_variational_equality}, 
while $w$ is the  \change{material derivative} of $\bar{z}$.
\change{The time discrete counterpart $\SplinePathenergy^{\sigma,K}[\imagevec]$ is defined by 
\begin{equation*}
 \SplinePathenergy^{\sigma,K}[\imagevec] \coloneqq \inf_{\bar{\mathbf{z}} \in L^2(\domain,\R^c)^K, \defvec \in \admset^K} \SplinePathenergy^{\sigma,K,D}[\imagevec,\bar{\mathbf{z}},\defvec],
\end{equation*}
where, for}
$\bar{\mathbf{z}} = (\bar{{z}}_1,\ldots, \bar{{z}}_K)$ we write
\begin{align*}
\SplinePathenergy^{\sigma,K,D}[\imagevec,\bar{\mathbf{z}},\defvec] 
\!\change{\coloneqq}\!&\int_{\domain} \sum_{k=1}^{K-1}\! \! \tfrac{1}{K}\left(\energyDensity_A(Da_k)\!+\!\gamma |D^{m} a_k|^2\right)
\!+\!\tfrac{K}{\delta}|\bar{z}_{k+1} \circ \deformation_k - \bar{z}_{k}|^2\\
&\!+\!\sigma\left(\sum_{k=1}^K K \energyDensity_D(D\deformation_k)+K\gamma |D^{m} \deformation_k|^2+\tfrac{1}{\delta K}|\bar{z}_{k}|^2 \right)
\!+\!\sum_{k=1}^K\tfrac{1}{\theta K}\left|K(\image_k \circ \deformation_k - \image_{k-1}) - \bar{z}_{k}\right|^2 \d x\change{.}
\end{align*}
\change{Here,} $\bar{{w}}_k=K(\bar{z}_{k+1} \circ \deformation_k - \bar{z}_{k})$ is the discrete material derivative of $\bar{z}_{k}$,
while $\hat{z}_k=K(\image_k \circ \deformation_k - \image_{k-1})$ is the actual material derivative of $u_k$, with the corresponding second order derivative $(\hat{w}_k)_{k=1}^{K-1}$ given by \eqref{eq:discrete_w_definition}.

For $M,N \geq 3$ we define the computational domain
\[
\discreteDomain\!\change{\coloneqq}\!\left\{\tfrac{0}{M-1},\tfrac{1}{M-1},\ldots,\tfrac{M-1}{M-1}\right\}\!\times\!\left\{\tfrac{0}{N-1},\tfrac{1}{N-1},\ldots,\tfrac{N-1}{N-1}\right\},
\]
with discrete boundary 
$\partial\discreteDomain\change{\coloneqq}\discreteDomain \cap \partial([0,1]^2)
%\backslash\{\tfrac{1}{M-1},\ldots,\tfrac{M-2}{M-1}\}\times\{\tfrac{1}{N-1},\ldots,%\tfrac{N-2}{N-1}\}
$ 
and $\left\|\discreteImage\right\|_{L^p_{\MN}}^p\change{\coloneqq}\frac{1}{\MN}\sum_{(\change{\discretex,\discretey})\in\discreteDomain}\|\discreteImage(\change{\discretex,\discretey})\|_p^p$.
The discrete image space is $\imageSpace_{\MN}\coloneqq\{\discreteImage:\discreteDomain\to\R^\change{c}\}$ and
the set of admissible deformations is
\begin{align*}
\mathcal{D}_{\MN}\!\change{\coloneqq}\!&\left\{\discreteDeformation=\change{(\discreteDeformation^1,\discreteDeformation^2)}\!:\!\discreteDomain\!\to\![0,1]^2, \discreteDeformation\!=\!\Id\!\text{ on }\!\partial\discreteDomain,\ \det(\nabla_{\MN}\discreteDeformation)\!>\!0\right\},
\end{align*}
where the discrete Jacobian operator of $\discreteDeformation$ at $\change{(\discretex,\discretey)}\in\discreteDomain$ is defined  
as the forward finite difference operator with Neumann boundary conditions.
Here and in the rest of the paper we used bold faced letters for fully discrete quantities.
A spatial warping operator~$\warp$ that approximates
the pullback of an image channel~$\discreteImage^j\circ\discreteDeformation$ at a point $\change{(\discretex,\discretey)}\in\discreteDomain$ \change{is defined} by
\begin{align*}
&\warp[\discreteImage^j,\discreteDeformation]\change{(\discretex,\discretey)}
\change{\coloneqq}\sum_{\change{(\tilde\discretex,\tilde\discretey)}\in\discreteDomain}s(\discreteDeformation^\change{1}\change{(\discretex,\discretey)}
-\change{\tilde\discretex})s(\discreteDeformation^{\change{2}}\change{(\discretex,\discretey)}-\change{\tilde\discretey})\discreteImage^j\change{(\tilde\discretex,\tilde\discretey)}\,,
\end{align*}
where $s$ is the third order B-spline interpolation kernel.
This form of warping is also used for composition of deformations, i.e we define the fully discrete acceleration as an approximation of \eqref{eq:discrete_acceleration_definition} by
\begin{equation*}
\discreteAcceleration^j_k\coloneqq K^2(\warp[\discreteDeformation^j_{k+1}-\Id,\discreteDeformation_k]-(\discreteDeformation^j_k -\Id)),~ j=1,2.
\end{equation*}
In summary, the fully discrete spline energy in the metamorphosis model for a 
$(K+1)$-tuple $(\discreteImage_k)_{k=0}^K$ of discrete images\change{,} a $K$-tuple $(\bar{\discreteDerivative}_k)_{k=1}^{K}$ of discrete derivatives \change{and a $K$-tuple $(\discreteDeformation_k)_{k=1}^K$ of discrete deformations}
reads as
\begin{align*}
\change{\SplinePathenergy_{\MN}^{\sigma,K}[(\discreteImage_k)_{k=0}^K]}
\change{\coloneqq}&\inf\limits_{\change{\bar{\discreteDerivative} \in \imageSpace_{\MN}^K,} \defvec \in \mathcal{D}_{\MN}^K}\SplinePathenergy_{\MN}^{\sigma,K,D}[(\discreteImage_k)_{k=0}^K,(\bar\discreteDerivative_k)_{k=1}^K,(\discreteDeformation_k)_{k=1}^K]\\
=&\!\inf\limits_{\change{\bar{\discreteDerivative} \in \imageSpace_{\MN}^K,}\defvec \in \mathcal{D}_{\MN}^K}\!\sum_{k=1}^{K-1}\! \tfrac{1}{K} \Vert\energyDensity_A(\nabla_{\MN}\discreteAcceleration_k)\Vert_{L^1_{\MN}} \!+\!\tfrac{K}{\delta} \DataEnergy^s_{\MN}[\bar\discreteDerivative_{k},\bar\discreteDerivative_{k+1},\discreteDeformation_k]\\
&\quad \quad \quad \quad  \!+\!\sum_{k=1}^K \sigma \left(K\Vert\energyDensity_D(\nabla_{MN}\discreteDeformation_k)\Vert_{L^1_{\MN}}\!+\! \tfrac{1}{\delta K}  \Vert\bar\discreteDerivative_{k}\Vert^2_{L^2_{\MN}}\right) + \tfrac{1}{\theta K} \DataEnergy^g_{\MN}[\discreteImage_{k-1},\discreteImage_k,\bar\discreteDerivative_{k},\discreteDeformation_k]\,,
\end{align*}
where
\begin{align*}
\DataEnergy^s_{\MN}[\discreteDerivative,\tilde\discreteDerivative,\discreteDeformation]
\change{\coloneqq}&\frac{1}{2c}\sum_{j=1}^{c}\left\Vert\warp[\tilde\discreteDerivative^j,\discreteDeformation]-\discreteDerivative^j\right\Vert_{L^2_{\MN}}^2\change{,}\\
\DataEnergy^g_{\MN}[\discreteImage,\tilde\discreteImage,\discreteDerivative,\discreteDeformation]
\change{\coloneqq}&\frac{1}{2c}\sum_{j=1}^{c}\left\Vert K (\warp[\tilde\discreteImage^j,\discreteDeformation]-\discreteImage^j) - \discreteDerivative^j\right\Vert_{L^2_{\MN}}^2\,.
\end{align*}
%For simplicity, we neglect the higher order Sobolev norm terms in this fully discrete model.
While in the spatially continuous context the compactness induced by the 
$H^{m}$-seminorm is indispensable, in this fully discrete model grid dependent regularity is ensured by the use of cubic B-splines.
Thus, we dropped the higher order Sobolev norm terms in this fully discrete model.

To improve the robustness of the overall optimization, we take into account a multiresolution strategy. In detail, on the coarse computational domain of size $M_{L}\times N_{L}$ with $M_{L}=2^{-(L-1)}M$ and $N_{L}=2^{-(L-1)}N$ for a given $L\geq1$,
a time discrete spline sequence $\change{(\discreteImage_k)_{k=0}^{K}}$ is computed as minimizer of $\SplinePathenergy_{M_{L} N_{L}}^{\sigma,K}$ subject to given fixed images $\discreteImage_{i_j}=\discreteImage^I_{j}, ~ j=1,\dots, J$.
In subsequent prolongation steps, the width and the height of the computational domain are successively doubled and the initial deformations, images and derivatives are obtained
via a bilinear interpolation of the preceding coarse scale solutions.
%%%%%%%%%%%%%%%%%%%%%%%%%%%%%%%%%%%%%%%%%%%%%%%%%%%%%%%%%%%%%

\section{Numerical Optimization Using the iPALM Algorithm}\label{sec:algorithm}
In this section, we discuss the numerical solution of the above fully discrete variational problem
based on the application of a variant of the inertial proximal alternating linearized minimization algorithm 
(iPALM, \cite{PoSa16}). \change{Using this algorithm effective optimization results were achieved for a wide range of non convex and non smooth problems. In particular, it was already used for numerical optimization in the context of  the deep feature metamorphosis model~\cite{EfKoPo19a}.}
Following \cite{EfKoPo19a}, to enhance the stability the warping operation is linearized with respect to the deformation at 
$\discreteDeformation^{[\beta]}\in\deformationSpace_{\MN}$ \change{coming} from the previous iteration which leads to 
the modified energies
\begin{align*}
\tilde{\DataEnergy}^s_{\MN}[\discreteDerivative,\tilde\discreteDerivative,\discreteDeformation,\discreteDeformation^{[\beta]}]
\change{\coloneqq}&\tfrac{1}{2c}\!\sum_{j=1}^{c}\!\left\Vert\warp[\tilde\discreteDerivative^j,\discreteDeformation^{[\beta]}]\!+\!\left\langle\Lambda_j(\discreteDerivative,\tilde\discreteDerivative,\discreteDeformation^{[\beta]}),\discreteDeformation\!-\!\discreteDeformation^{[\beta]}\right\rangle\!-\!\discreteDerivative^j\right\Vert_{L^2_{\MN}}^2\\
\tilde{\DataEnergy}^g_{\MN}[\discreteImage,\tilde\discreteImage,\discreteDerivative,\discreteDeformation,\discreteDeformation^{[\beta]}]
\change{\coloneqq}&\tfrac{1}{2c}\!\sum_{j=1}^{c}\!\left\Vert K \warp[\tilde\discreteImage^j,\discreteDeformation^{[\beta]}]
\!+\!\left\langle\Lambda_j(K\discreteImage \!+\! \discreteDerivative,K\tilde\discreteImage,\discreteDeformation^{[\beta]}),\discreteDeformation\!-\!\discreteDeformation^{[\beta]}\right\rangle-(K\discreteImage^\change{j}+ \discreteDerivative^j)\right\Vert_{L^2_{\MN}}^2\,,
\end{align*}
with 
\begin{equation*}
\Lambda_j(\discreteImage,\tilde\discreteImage,\discreteDeformation^{[\beta]})=\tfrac{1}{2}(\nabla_\change{\MN}\warp[\tilde\discreteImage^j,\discreteDeformation^{[\beta]}]+\nabla\discreteImage^j).
\end{equation*}
To further stabilize the computation, the Jacobian operator
applied to the images is approximated using a Sobel filter.
Here, $\langle \cdot, \cdot \rangle$ represent the pointwise product of the involved matrices.
We use the proximal mapping of a functional~$f:\deformationSpace_{\MN} \to(-\infty,\infty]$ for $\tau>0$ given as
\begin{equation*}
\operatorname{prox}_\tau^f\change{[\discreteDeformation]}\coloneqq\argmin_{\change{\tilde\discreteDeformation} \in \deformationSpace_{\MN}}\left(\frac{\tau}{2}\left\|\change{\discreteDeformation-\tilde\discreteDeformation}\right\|^2_\change{{L^2_{\MN}}}
+f(\change{\tilde\discreteDeformation})\right).
\end{equation*}
Then, with the function values on $\partial\discreteDomain$ remaining unchanged, the proximal operator we are interested in is given by
\begin{align*}
&\operatorname{prox}_{\tau}^{\frac{K}{\delta}\widetilde\DataEnergy^s_{\MN}+\frac{1}{K\theta}\widetilde\DataEnergy^g_{\MN}}[\discreteDeformation^{t}_k]\\
=&\left(\Id+\tfrac{K}{c\tau\delta}\sum_{j=1}^c|\Lambda^s_j|^2 + \tfrac{1}{c\tau\theta K}\sum_{j=1}^c |\Lambda^g_j|^2\right)\\
\!&\!\Big(\!\discreteDeformation_k^{t}-\tfrac{K}{c\tau\delta}\sum_{j=1}^c\Lambda^s_j\big(\warp[\bar\discreteDerivative_{k+1}^j,\discreteDeformation^{[\beta]}_k]
-(\Lambda^s_j)^T\discreteDeformation^{[\beta]}_k-\bar\discreteDerivative_{k}^j\big) \!-\! \tfrac{1}{c\tau\theta K}\!\sum_{j=1}^c\! \Lambda^g_j \big(\warp[\!K\!\discreteImage_k^j,\discreteDeformation_k^{[\beta]}]\!-\!(\Lambda^g_j)^T\discreteDeformation_k^{[\beta]}\!-\!K\!\discreteImage_{k-1}^j\!-\!\bar\discreteDerivative_{k}^j\big) \Big),
\end{align*}
where $\Lambda^s_j\coloneqq\Lambda(\bar\discreteDerivative_{k}^j,\bar\discreteDerivative_{k+1}^j,\discreteDeformation_k^{[\beta]})$ and $\Lambda^g_j\coloneqq\Lambda(K\discreteImage_{k-1}^j+\bar\discreteDerivative_{k}^j,K\discreteImage_k^j,\discreteDeformation_k^{[\beta]})$. The first terms in both brackets are activated only for $k<K$.

%In the $l^{th}$ iteration of the algorithm for the minimization of the spline energy~$\SplinePathenergy_{\MN}^{\sigma,K}$ 
%the $k^{th}$ path elements are updated as follows
%\begin{align*}
%\discreteDeformation_k^{l,t}&=\discreteDeformation_k^{[\beta,l]}-\tfrac{1}{L[\discreteDeformation_k^{[l]}]}\nabla_{\discreteDeformation_k}\Bigg(\sigma K\Vert\energyDensity_D(\nabla_{\MN}{\discreteDeformation}_k^{[\beta,l]})\Vert_{L^1_{\MN}} \\
%&\hspace*{0.5cm}+ \frac{1}{K}\Vert \energyDensity_A (\nabla_{\MN}{\discreteAcceleration}^{[\beta,l]}_k)\!+\!\energyDensity_A(\nabla_{\MN}\discreteAcceleration^{[\beta,l]}_{k-1}) \Vert_{L^1_{\MN}} \Bigg),\\
%\discreteDeformation_k^{[l+1]}&=\operatorname{prox}_{L[\discreteDeformation_k^{[l]}]}^{\frac{K}{\delta}\widetilde\DataEnergy^s_{\MN}+\frac{1}{K\theta}\widetilde\DataEnergy^g_{\MN}}[\discreteDeformation_k^{l,t}],\\
%\bar{\discreteDerivative}_k^{[l+1]}&={\bar\discreteDerivative^{[\beta,l]}}_k-\dfrac{\nabla_{\bar\discreteDerivative_k}\SplinePathenergy_{\MN}^{\sigma,K,D}[\discreteImage^{[k,l]},\bar\discreteDerivative^{[\beta,k,l]},
%\discreteDeformation^{[k+1,l]}]}{L[\bar\discreteDerivative_k^{[l]}]},\\
%\discreteImage_k^{[l+1]}&={\discreteImage}_k^{[\beta,l]}\!-\!\dfrac{\nabla_{\discreteImage_k}\SplinePathenergy_{\MN}^{\sigma,K,D}[{\discreteImage}^{[\beta,k,l]},\bar\discreteDerivative^{[k+1,l]},\discreteDeformation^{[k+1,l]}]}{L[\discreteImage_k^{[l]}]}.
%\end{align*}

\change{
\begin{algorithm}
\change{
\SetInd{1ex}{1ex}
\For{$i=1$ \KwTo $I$}{
\For{$k=1$ \KwTo $K$}{
\tcc{update deformation}
$\discreteDeformation_k^{i,t}=\discreteDeformation_k^{[\beta,i]}-\tfrac{1}{L[\discreteDeformation_k^{[i]}]}\nabla_{\discreteDeformation_k}\Bigg(\sigma K\Vert\energyDensity_D(\nabla_{\MN}{\discreteDeformation}_k^{[\beta,i]})\Vert_{L^1_{\MN}}+ \frac{1}{K} \Vert \energyDensity_A (\nabla_{\MN}{\discreteAcceleration}^{[\beta,i]}_k)\!+\!\energyDensity_A(\nabla_{\MN}\discreteAcceleration^{[\beta,i]}_{k-1}) \Vert_{L^1_{\MN}} \Bigg)$\;
$\discreteDeformation_k^{[i+1]}=\operatorname{prox}_{L[\discreteDeformation_k^{[i]}]}^{\frac{K}{\delta}\widetilde\DataEnergy^s_{\MN}+\frac{1}{K\theta}\widetilde\DataEnergy^g_{\MN}}[\discreteDeformation_k^{i,t}]$\;
\tcc{update derivative}
$\bar{\discreteDerivative}_k^{[i+1]}={\bar\discreteDerivative^{[\beta,i]}}_k-\dfrac{\nabla_{\bar\discreteDerivative_k}\SplinePathenergy_{\MN}^{\sigma,K,D}[\discreteImage^{[k,i]},\bar\discreteDerivative^{[\beta,k,i]},
\discreteDeformation^{[k+1,l]}]}{L[\bar\discreteDerivative_k^{[l]}]}$\;
\If{$k \notin I^K$}{
\tcc{update image}
$\discreteImage_k^{[i+1]}={\discreteImage}_k^{[\beta,i]}\!-\!\dfrac{\nabla_{\discreteImage_k}\SplinePathenergy_{\MN}^{\sigma,K,D}[{\discreteImage}^{[\beta,k,i]},\bar\discreteDerivative^{[k+1,i]},\discreteDeformation^{[k+1,i]}]}{L[\discreteImage_k^{[i]}]}$\;
}
}
}
\caption{Algorithm for minimizing $\SplinePathenergy_{\MN}^{\sigma,K}$.}
\label{algo:optimization}
}
\end{algorithm}

The actual minimization  $\SplinePathenergy_{\MN}^{\sigma,K}$ is performed by   Algorithm~\ref{algo:optimization}.}
We used the following notation for the extrapolation with $\beta>0$ of the $k^{th}$~path element in the \change{$i^{th}$} iteration step 
\begin{align*}
&h^{[\beta,\change{i}]}_k=h_k^{[\change{i}]}+\beta(h_k^{[\change{i}]}-h_k^{[\change{i}-1]}),\\
&h^{[k,\change{i}]} = (h_{1,\ldots, k-1}^{[\change{i}+1]},h_{k,\ldots, K}^{[\change{i}]}),\\
&h^{[\beta,k,\change{i}]}=(h_1^{[\change{i}+1]},\ldots,h_{k-1}^{[\change{i}+1]},h^{[\beta,\change{i}]}_k,h_{k+1}^{[\change{i}]},\ldots,h_{K}^{[\change{i}]}),
\end{align*}
while the acceleration $\discreteAcceleration^{[\beta,\change{i}]}$ is 
computed with correspondingly updated $\discreteDeformation^{[\beta,\change{i}]}$ values. Furthermore, \change{$I^K$ is the set of fixed indices (cf. \eqref{eq:admissible_images}) and} we denote by $L[h]$ the Lipschitz constant of the gradient of the function $h$, which is determined by backtracking~\change{\cite{BeTe09}. 
The discrete deformations are initialized by the identity deformation, the discrete images by piecewise linear interpolation of the key frame images, while the discrete derivatives are initialized as differences of two consecutive images in the sequence.}
%%%%%%%%%%%%%%%%%%%%%%%%%%%%%%%%%%%%%%%%%%%%%%%%%%%%%%%%
\section{Applications}\label{sec:results}
In what follows, we investigate and discuss qualitative properties of the spline interpolation in the space of images, 
being aware that the superior temporal smoothness of this interpolation is difficult to show with series of still images.
For all the examples we use $\change{L=}5$ levels in the multi-\change{level} approach and \change{$I=250$ iterations of iPALM algorithm on each level with  the extrapolation parameter}  $\beta=\frac{1}{\sqrt{2}}$. For the first \change{two examples} $\change{M=}N=64$, while for the \change{others} $\change{M=}N=128$. \change{Also, for the first two and the final example $K=8$, while for the others $K=16$.} For plotting of images we crop the values to $[0,1]$, for the material derivative the values are scaled to the interval $[0,1]$ for plotting, while for the displacement and acceleration plots hue refers to the direction and the intensity is proportional to its norm as indicated by the color wheel.

Figure~\ref{fig:gaussians} shows a first test case. As key frames we consider three images showing 
two dimensional \change{Gaussian} distribution with small variance at different positions and of different mass.
Spline interpolation is compared with piecewise geodesic interpolation. Furthermore, it is depicted that for the metamorphosis spline, the curve in $(x,y,m)$-space (position, mass) corresponds almost perfectly to the cubic spline 
interpolation of the parameters of the Gaussian distribution on the key frames.

\begin{figure*}[htb]
\includegraphics[width=\textwidth]{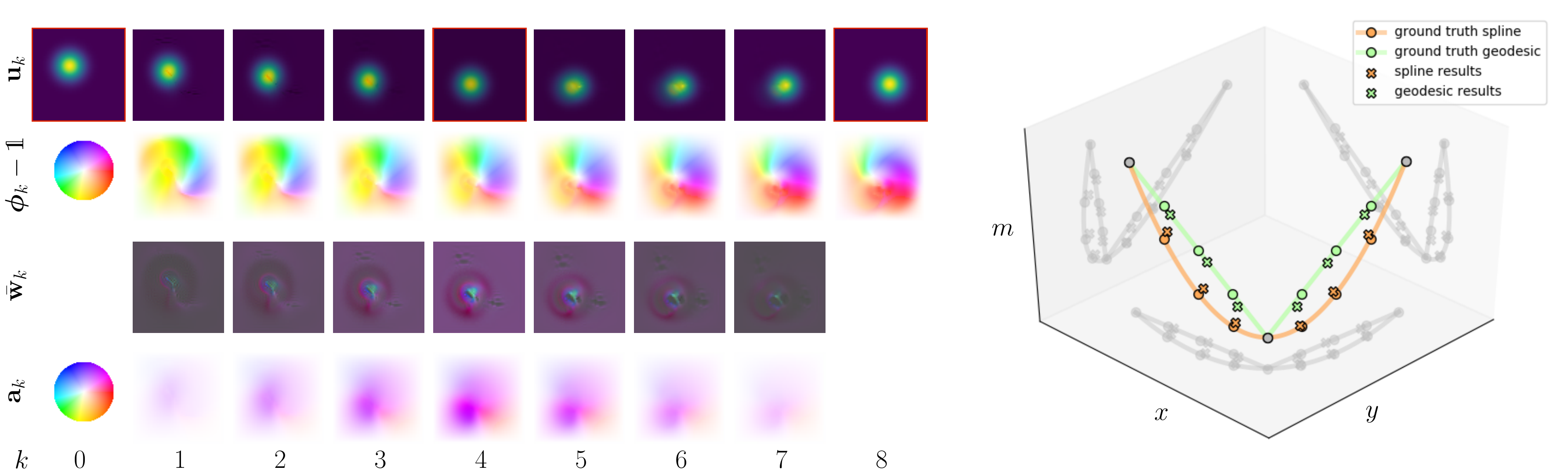}
\caption{Left: Time discrete spline with framed key frame images (first row), color-coded displacement field (second row),  discrete second order material derivative (third row)  and color-coded discrete acceleration field (fourth row) for the \change{Gaussians} example and values of the parameters $\delta=5\cdot 10^{-3}$, $\sigma=1$, $\theta=5\cdot 10^{-5}$. The colors and their intensities indicate the direction and the intensity of the field, as indicated by the color wheel on the left.
Right: Euclidean splines in $(x,y,m)$ coordinates for the input parameters versus  splines for metamorphosis 
in $(x,y,m)$  extracted from the numerical results in post-processing, with $(x,y)$ denoting the center of mass and $m$ the mass of the distribution. }
\label{fig:gaussians}
\end{figure*}
As a next step, we conceptually compare splines for metamorphosis and piecewise geodesic paths in Figure~\ref{fig:circle_square}. For this specific example, we considered the image of a circle and two identical squares as key frames. The influence of the circle's curvature on the spline segment between the two identical squares is visible via concave 'edges', while for the piecewise geodesic interpolation any memory of the circle is naturally lost in the constant interpolation between the two squares.
\begin{figure*}[!h]
\includegraphics[width=\textwidth]{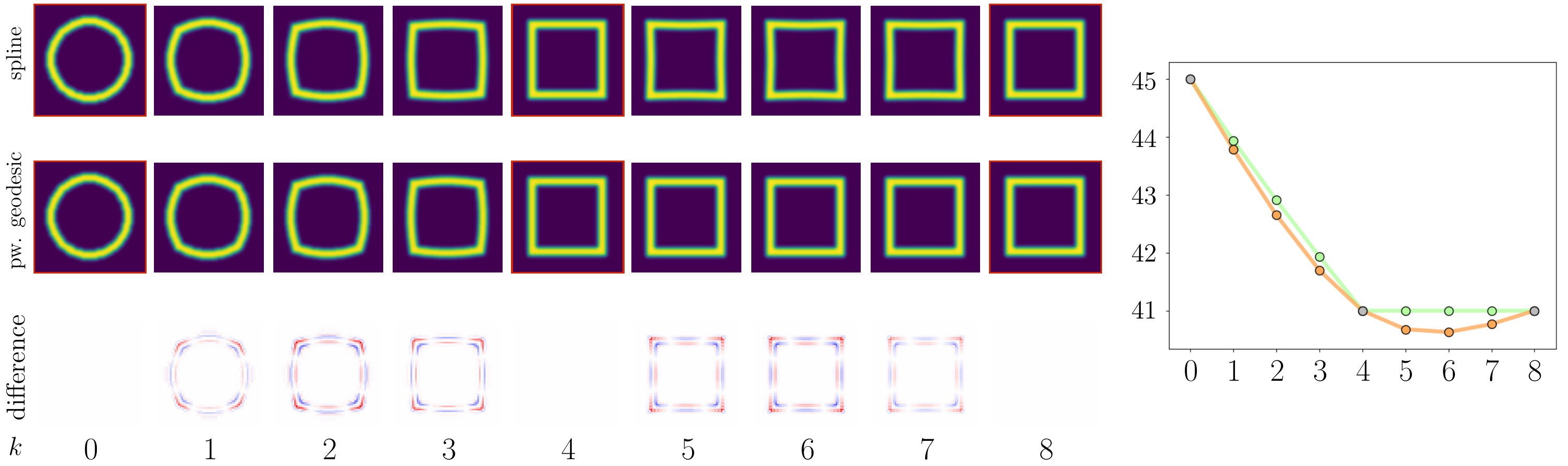}
\caption{Left: Time discrete spline (top row) and piecewise geodesic (\change{middle row}) interpolation with framed key frames. \change{The bottom row shows the difference in intensity between the different interpolations, using the color map $-0.35\hspace{1mm}$\protect\resizebox{.08\linewidth}{!}{\protect\includegraphics{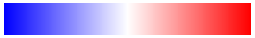}}$\hspace{1mm}0.35$.}
Right: Width of the interpolated shape measured at the horizontal axis of symmetry (in number of pixels) for a spline interpolation (orange) and piecewise geodesic interpolation (green) showing the concavities ($\delta=5\cdot 10^{-3}$, $\sigma=1$, $\theta=5\cdot 10^{-4}$). }
\label{fig:circle_square}
\end{figure*}

Next, we consider spline interpolation between three human portraits on Figure \ref{fig:human_faces}. 
The plots of second material derivative and acceleration show a strong concentration around the key frames, where
the spline is expected to be smooth and the piecewise geodesic path just Lipschitz.
\begin{figure*}[htb]
\includegraphics[width=\textwidth]{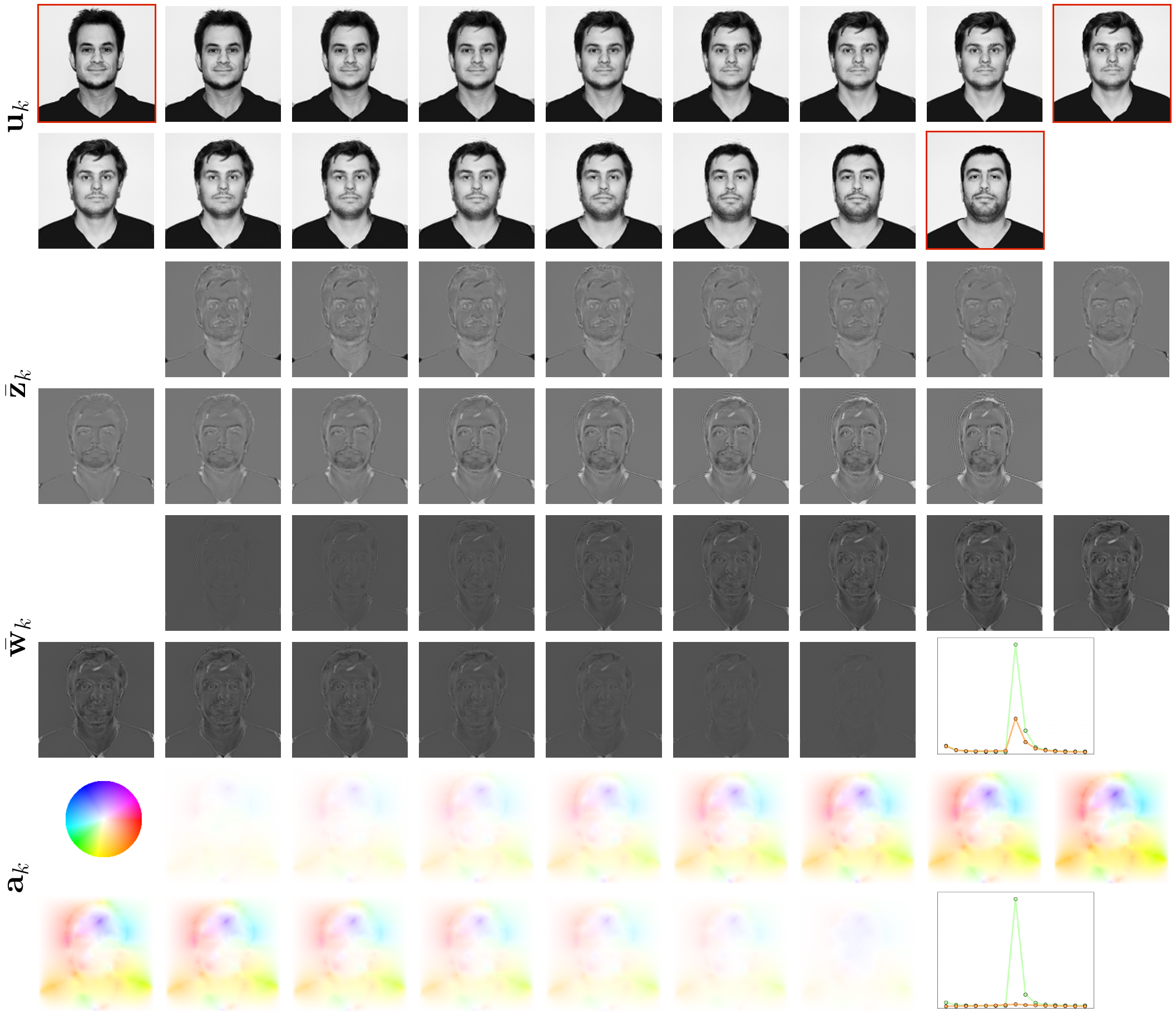}
\caption{Time discrete spline with framed fixed images (first and second row), first order material derivative slack variable $\bar{\mathbf{z}}$ (third and fourth row), second order material derivative with energies comparison (fifth and sixth row) and color-coded acceleration field with energies comparison (seventh and eight row) for values of the parameters $\delta=2\cdot 10^{-2},\,\sigma=2,\,\theta=8\cdot 10^{-4}$. The graphics on the right in row four and six show for the spline (orange) time plots of the $L^2$-norm of the actual second order material derivative $\hat{\mathbf{w}}$ and the dissipation energy density reflecting  motion acceleration \change{$\|\energyDensity_A(\nabla_{\MN}\discreteAcceleration_k)\|_{L^1_{\MN}}$}, respectively. This is compared to the corresponding piecewise geodesic interpolation (green) (not visualized here, cf. Figure \ref{fig:human_faces_comparison}).}
\label{fig:human_faces}
\end{figure*}
The analogous observations hold for Figure~\ref{fig:letters}. 

\change{The impact of the coloring of the key frame images on the geometry of the spline interpolation
and the interplay between the Eulerian flow acceleration and the second order material derivative along motion paths is depicted in Figure~\ref{fig:color_letters}. Therein, we consider block-colored letters as key-frame images. 
In our first example, 
the coloring is consistent with the interpolating flow shown in the black and white spline interpolation in 
Figure~\ref{fig:letters}. Hence, in the corresponding spline interpolation 
color is mainly passively transported along this flow rather than blending it (top row).
In the second example, the red patch in the middle key-frame image is chosen to be located top right instead, further away from the red patches in the extremal key-frame images (bottom row). Hence, 
the coloring is no longer consistent to the flow in the black and white example. 
In fact, the transport in particular of the blue color is now 
strongly reconfigured as seen in the first interval between the 'P' and the 'A'. 
Furthermore, in the second interval between the 'A' and the 'Q'  blending from blue to red and from red to blue occurs.} 
\begin{figure*}[htb]
\includegraphics[width=\textwidth]{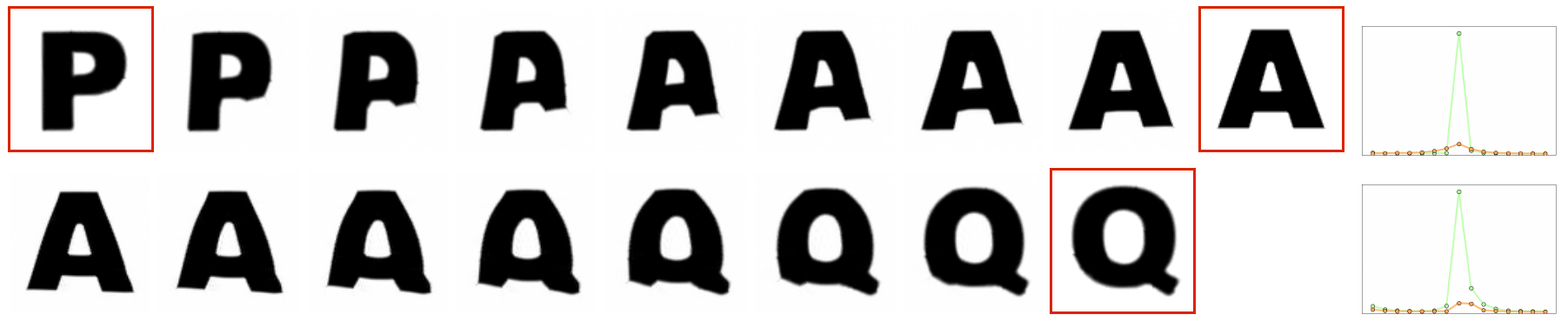}
\caption{Left: Time discrete spline with framed fixed images (top and bottom row). Right: Energy density norm of acceleration flow $\|\energyDensity_A(\nabla_{\MN}\discreteAcceleration_k)\|_{L^1_{\MN}}$ (top), and $L^2$-norm of the actual second order material derivative $\hat{\mathbf{w}}$ (bottom). Parameter values: $\delta=10^{-3}, \sigma=2, \theta=2\cdot 10^{-5}$.}
\label{fig:letters}
\end{figure*}

%%%%%%%%%%%%%%%%%%%%%%%%%%%%%%%%%%%%%%%%%%%%%%%%%%%%%%%%%

\change{
\begin{figure*}[htb]
%\resizebox{\linewidth}{!}{
%\tikzstyle{frame} = [line width=1.8pt, draw=red,inner sep=0.01em]
%\begin{tikzpicture}
%\begin{scope}[scale=1.22]
%\begin{scope}
%\edef\currentCnt{0}
%\foreach \i in  {0,2,4,6,8,10,12,14,16} {
%\ifthenelse{0 = \i \OR 8 = \i \OR 16=\i}{
%\node[frame,anchor=south west] at (0.75*\currentCnt+0.08,0) {\includegraphics[width=0.09\linewidth]{../../images/letters/blue_and_red/option1/spline/u_\i.png}};}
%{
%\node[anchor=south west] at (0.75*\currentCnt,0-0.08) {\includegraphics[width=0.09\linewidth]{../../images/letters/blue_and_red/option1/spline/u_\i.png}};
%}
%\pgfmathparse{\currentCnt+1.9}
%\xdef\currentCnt{\pgfmathresult}
%}
%\end{scope}
%
%\begin{scope}[shift={(0,-1.55)}]
%\edef\currentCnt{0}
%\foreach \i in  {0,2,4,6,8,10,12,14,16} {
%\ifthenelse{0 = \i \OR 8= \i \OR 16=\i}{
%\node[frame,anchor=south west] at (0.75*\currentCnt+0.08,0) {\includegraphics[width=0.09\linewidth]{../../images/letters/blue_and_red/option2/spline_new/u_\i.png}};}
%{
%\node[anchor=south west] at (0.75*\currentCnt,0-0.08) {\includegraphics[width=0.09\linewidth]{../../images/letters/blue_and_red/option2/spline_new/u_\i.png}};
%}
%\pgfmathparse{\currentCnt+1.9}
%\xdef\currentCnt{\pgfmathresult}
%\node at (\currentCnt*0.75-0.65,-0.2) {$\i$};
%}
%\node[rotate=90] at (-0.2,2.25) {$\discreteImage_k$};
%\node[rotate=90] at (-0.2,0.75) {$\discreteImage_k$};
%\node at (-0.2,-0.2) {{$k$}};
%\end{scope}
%\end{scope}
%\end{tikzpicture}
%}
\includegraphics[width=\textwidth]{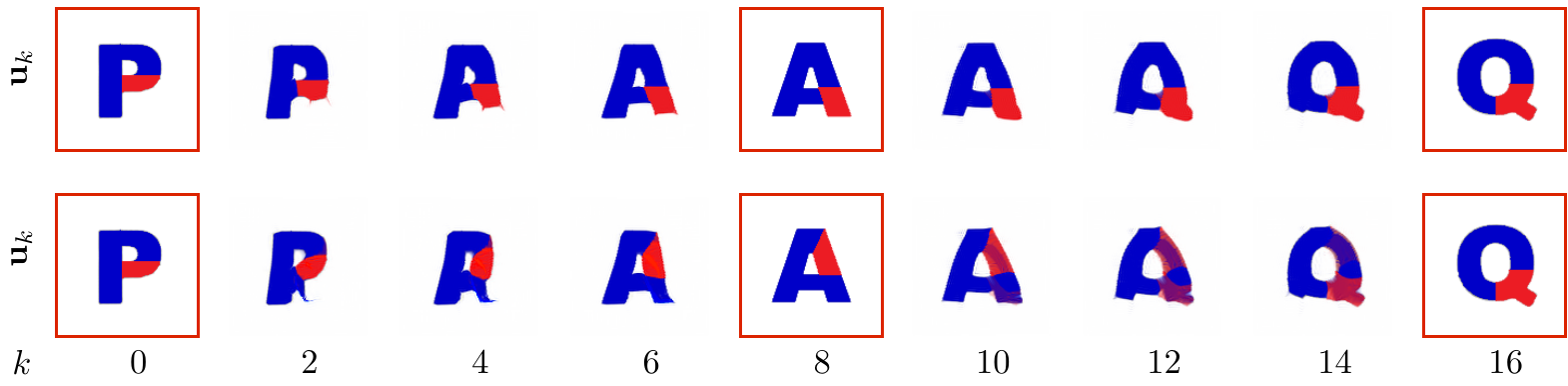}
\caption{Top and bottom rows: Two time discrete splines with key-frames images differing in shape and color. Parameter values: $\delta=8 \cdot 10^{-3}, \sigma=2.5, \theta=2\cdot 10^{-4}$. For visualization purposes, only even-numbered frames $\discreteImage_k$ are depicted.}
\label{fig:color_letters}
\end{figure*}
}

A comparison of splines and piecewise geodesic paths in shown in Figure~\ref{fig:human_faces_comparison}.
One particularly observes that for the faces the shown spline image is thicker and for the letters the spline image shows more round contours than the for the piecewise geodesic counterpart. This is again the non-local impact of key frames beyond those bounding the current interpolation.

\begin{figure*}[htb]
\includegraphics[width=\textwidth]{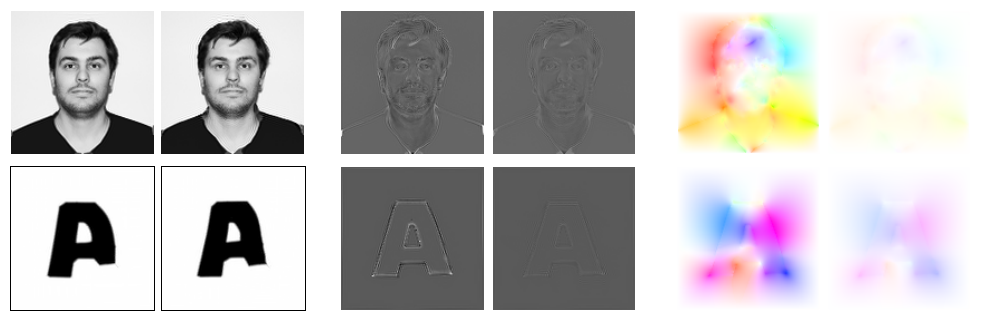}
\caption{Top row: Image $\discreteImage_{11}$ for the human face example, second order material derivative $\hat{\mathbf{w}}_k$ and acceleration field $\discreteAcceleration_k$ for $k=8$, for the time discrete piecewise geodesic (left image of each panel pair) and spline (right image of each panel). The pairs of material derivatives  and the acceleration fields are jointly scaled to reflect the differences in intensities. Bottom row: Same visualization of image $\discreteImage_4$ from the letter example.}
\label{fig:human_faces_comparison}
\end{figure*}
%Fig.~\ref{fig:cells} shows a comparison of the original frames, spline interpolation and piecewise geodesic interpolation for the images extracted from a video made by David Rogers from Vanderbilt University in the 1950s, which shows the interaction between white blood cells and bacteria.
Next, \change{we ask} for a reconstruction of frames given certain frames at selected time stamps extracted from a video.  Here, we compare the resulting spline interpolation and the piecewise geodesic interpolation directly with corresponding frames of the original video as a benchmark for both approaches. Indeed, Figure~\ref{fig:cells} shows this comparison of the original frames, spline interpolation and piecewise geodesic interpolation for the images extracted from a video made by David Rogers from Vanderbilt University in the 1950s\change{\footnote{\url{https://embryology.med.unsw.edu.au/embryology/index.php/Movie_-_Neutrophil_chasing_bacteria}}}, which shows the interaction between white blood cells and bacteria. The spline interpolation clearly shows less blending artifacts in comparison to the piecewise geodesic interpolation.
\begin{figure*}[!h]
\includegraphics[width=\textwidth]{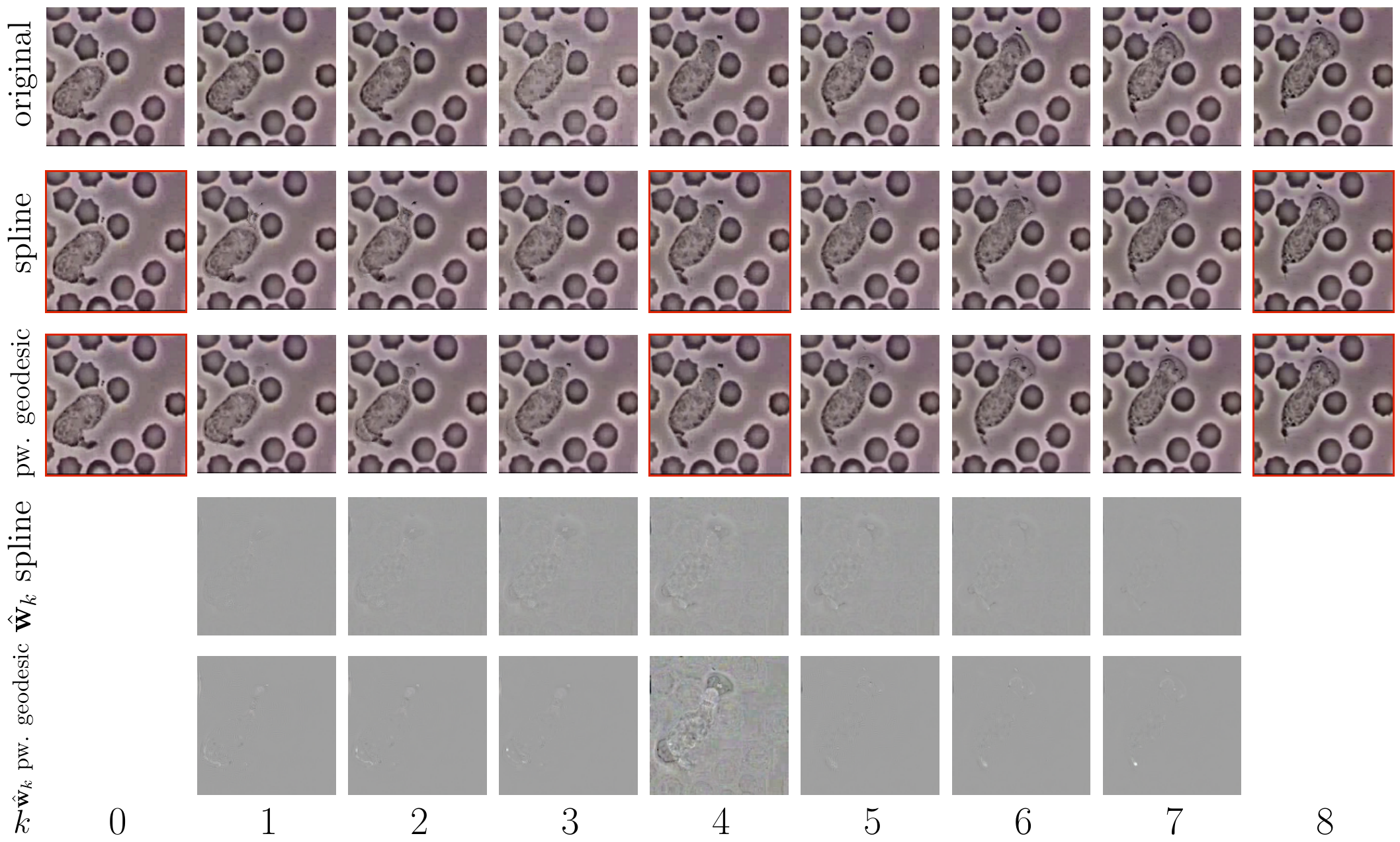}
\caption{First row: The original frames extracted from the video showing white blood cell (neutrophil) chasing a Staphylococcus aureus bacterium. %The original video was made by David Rogers from Vanderbilt University in the 1950s.% 
The images are courtesy of Robert A. Freitas, Institute for Molecular Manufacturing, California, USA. Second row: Time discrete spline with framed fixed images. Third row: Time discrete piecewise geodesic with framed fixed images. Fourth row: Fully discrete second order material derivative for the discrete spline interpolation. \change{Fifth row: Fully discrete second order material derivative for the discrete piecewise geodesic interpolation.} The values of parameters are $\delta=4\cdot 10^{-2}$, $\sigma=2.5$, $\theta=1.6 \cdot 10^{-4}$.}
\label{fig:cells}
\end{figure*}

\change{\section{Conclusion}
In this paper we introduced a novel model for the smooth interpolation of a set of given key frame images based on the 
image metamorphosis model. This is achieved by means of a minimization of the spline energy functional. Thereby, one penalizes acceleration reflected in the underlying transport and Eulerian acceleration defined in terms of the second order change of image intensity along the flow paths. Based on \cite{BeEf14,EfKoPo19a} we proposed a variational time discretization by approximation of the aforementioned quantities by finite differences. We prove consistency of this time discrete model to the time continuous counterpart by the means of Mosco convergence. Space discretization is based on finite differences and approximation of warping operation by cubic B-splines, while for numerical optimization we use the iPALM algorithm. We present a range of numerical experiments where we compare the differences between the piecewise geodesic interpolation and our novel spline interpolation in the space 
image. The quantitative results show that the spline interpolation achieves significant improvement with respect to the smoothness in time, which is indicated in better control and regularity of acceleration quantities in the vicinity of 
the key frames. Furthermore, the influence of the preceding and/or succeeding key frame(s) is visible for the spline interpolation, which naturally relates to the larger support of the spline basis functions for Euclidean cubic splines. 
Still open is an investigation of the differences between our spline model based on the splitting of acceleration quantities and the model based on the  Riemannian second order covariant derivative as relevant acceleration quantity. Furthermore, a challenge is the adaptation of our approach to textures and feature based image representations. 
}

%% For tables use
%\begin{table}
%% table caption is above the table
%\caption{Please write your table caption here}
%\label{tab:1}       % Give a unique label
%% For LaTeX tables use
%\begin{tabular}{lll}
%\hline\noalign{\smallskip}
%first & second & third  \\
%\noalign{\smallskip}\hline\noalign{\smallskip}
%number & number & number \\
%number & number & number \\
%\noalign{\smallskip}\hline
%\end{tabular}
%\end{table}

% Authors must disclose all relationships or interests that 
% could have direct or potential influence or impart bias on 
% the work: 
%
% \section*{Conflict of interest}
%
% The authors declare that they have no conflict of interest.

\paragraph{Acknowledgements}
This work was partially supported by the Deutsche Forschungsgemeinschaft (DFG, German Research
Foundation) via project 211504053 - Collaborative Research Center 1060 and project 390685813 - Hausdorff
Center for Mathematics.

\change{The datasets generated during and/or analysed during the current study are available from the corresponding author on reasonable request.}

\bibliographystyle{alpha}     
\bibliography{jmiv_bibliography}

\end{document}